\newtheorem{theorem}{Theorem}[section]
\newtheorem{lemma}{Lemma}[section]
\newtheorem{assumption}{Assumption}[section]
\newtheorem{definition}{Definition}[section]
\newtheorem{corollary}{Corollary}[section]
\title{Mirror descent method for stochastic \\ multi-objective optimization}
\date{} 					
\author{ {Linxi Yang}\\
	School of Mathematical Sciences\\
	Sichuan University\\
Sichuan China, 610064 \\
	\texttt{leoyanglinxi@gmail.com} \\
	\And
	{Liping Tang}  \\
	National Center for Applied Mathematics in Chongqing\\
	Chongqing Normal University\\
Chongqing China, 401331 \\
	\texttt{tanglipings@163.com}
\And
{Jiahao Lv}\\
	School of Mathematical Sciences\\
	Sichuan University\\
Sichuan China, 610064 \\
	\texttt{jiahaolv554@gmail.com} \\
\And
{Yuehong He}\\
	School of Mathematical Sciences\\
	Sichuan University\\
Sichuan China, 610064 \\
	\texttt{heyuehong1111@163.com} \\
\And
{Xinmin Yang}\thanks{Corresponding author}\\
	National Center for Applied Mathematics in Chongqing\\
	Chongqing Normal University\\
Chongqing China, 401331 \\
	\texttt{xmyang@cqnu.edu.cn} \\
}
\definecolor{darkgreen}{RGB}{0,100,0} 
\begin{document}
\maketitle

\begin{abstract}
Stochastic multi-objective optimization (SMOO) has recently emerged as a powerful framework for addressing machine learning problems with multiple objectives.
The bias introduced by the nonlinearity of the subproblem solution mapping complicates the convergence analysis of multi-gradient methods.
In this paper, we propose a novel SMOO method called the Multi-gradient Stochastic Mirror Descent (MSMD) method, which incorporates stochastic mirror descent method to solve the SMOO subproblem, providing convergence guarantees.
By selecting an appropriate Bregman function, our method enables analytical solutions of the weighting vector and requires only a single gradient sample at each iteration.
We demonstrate the sublinear convergence rate of our MSMD method under four different inner and outer step setups.
For SMOO with preferences, we propose a variant of MSMD method and demonstrate its convergence rate.
Through extensive numerical experiments, we compare our method with both stochastic descent methods based on weighted sum and state-of-the-art SMOO methods. Our method consistently outperforms these methods in terms of generating superior Pareto fronts on benchmark test functions while also achieving competitive results in neural network training.
\end{abstract}

\keywords{Stochastic multi-objective optimization \and Stochastic mirror descent method \and Multi-gradient descent algorithm \and Convergence rate}

\section{Introduction}
Multi-objective optimization (MOO) problem requires the simultaneous optimization of multiple objective functions. In general, it is infeasible to find a solution that optimizes all objectives simultaneously. Instead, only a Pareto optimal set can be obtained, where improving one objective necessarily worsens others.
Consequently, solving MOO problems presents significant challenges, prompting researchers to propose various strategies for identifying the Pareto optimal set \cite{fukuda2014survey}.
A notable class of methods is the descent (multi-gradient) method, which manipulates gradients to determine a direction that decreases all objectives simultaneously.
This method, known as the multiple gradient descent algorithm (MGDA), was first proposed by Fliege and Svaiter \cite{fliege2000steepest}, and it determines a common descent direction by solving a strongly convex subproblem. Building on this algorithm, various multi-gradient methods have been developed, including projected gradient methods \cite{fukuda2013inexact,drummond2004projected,zhao2021convergence} and Newton and quasi-Newton methods \cite{lapucci2023limited,fliege2009newton,gonccalves2022globally}.

In recent years, the MOO problem has gained significant attention across various real-world applications,  including online advertisement modeling \cite{ma2018modeling, lin2019pareto} , semantic segmentation \cite{long2015fully} and autonomous driving systems \cite{huang2019apolloscape}. Against this backdrop, the MOO problem is often referred to as a multi-task learning (MTL) problem, where different tasks are associated with distinct objectives.
Consequently, several gradient manipulation-based methods have emerged.
For instance, Sener et al. \cite{sener2018multi} initially approached MTL problem as a MOO problem, employing MGDA for direct resolution. Yu et al. \cite{yu2020gradient} introduced PCGrad based on Schmidt orthogonalization and Liu et al. \cite{liu2021conflict} presented CAGrad, incorporating constraints to restrict descent direction.
However, the MTL problems in machine learning are typically large-scale, these descent methods require computing the gradients of all objective functions over the entire dataset at each iteration. This makes traditional convergence analysis of multi-gradient methods impractical due to the computational burden. A similar challenge arises in single-objective machine learning problems, which led to the development of the stochastic gradient descent (SGD) method.
SGD mitigates this issue by using a subset of the gradient (batch gradient) in place of the full gradient, ensuring convergence over a sufficient number of iterations. While SGD has been extensively studied and proven effective in the optimization with single-objective  \cite{bottou2018optimization, lan2020first, robbins1951stochastic}, the application of stochastic methods in optimization with multiple objectives remains not fully understood.

We refer to MOO method using stochastic gradients as stochastic multi-objective optimization (SMOO) method. In most multi-gradient methods of MOO derive a descent direction by computing a convex combination of gradients, with the weighting parameters determined by solving strongly convex subproblems. However, the nonlinearity in the solution mapping of these subproblems can introduce biased weighting parameters, even when the gradients of each objective are assumed to be unbiased. This introduces a significant challenge for convergence analysis in SMOO methods.
To address this challenge, Liu and Vicente et al. proposed a stochastic multiple gradient (SMG) method \cite{liu2021stochastic} for convex objectives. This method directly substitutes the full gradient in the MGDA with a batch gradient. It guarantees convergence by assuming that the solution mapping of the subproblems exhibits Lipschitz continuity and by gradually increasing the batch sample size.
However, Zhou et al. have demonstrated in their work \cite{zhou2022convergence} that this assumption of continuity is not appropriate. They introduced a correlation-reduced multi-objective gradient manipulation (CR-MOGM) method in more general non-convex scenarios, which eliminates the need for such continuity assumptions.
CR-MOGM incorporates a tracking variable to adjust the weighting parameter of the update direction, reducing the correlation between the weight parameter and the stochastic batch gradients. This adjustment facilitates the identification of a descent direction that advances towards the Pareto set. As a result, CR-MOGM achieves a convergence rate result of ${O\left( {{K^{ - 1/4}}} \right)}$, where $K$ denotes the total number of iterations. Similarly, Fernando et al. introduced the multi-objective gradient with correction (MoCo) method \cite{fernando2023mitigating}, which utilizes a tracking variable incorporating historical information. MoCo differs from CR-MOGM in that it leverages historical gradient information as a correction for the current step gradient, enhancing the stability of the update direction.
All of these SMOO methods rely on obtaining exact solutions for the subproblems. In contrast, Xiao et al. \cite{xiao2024direction} presented the stochastic direction-oriented multi-objective gradient descent (SDMGrad) method, which employs a single-objective stochastic strategy to solve subproblems.  Differing from previous exact methods, this approach constructs subproblems similarly to deterministic MGDA but employs SGD for solving and obtains bias descended updating directions.
By setting the number of subproblem iteration as $S = {K^2}$, SDMGrad achieves a convergence rate of ${O\left( {1/\sqrt K } \right)}$.
However, this result is guaranteed through triple sampling of i.i.d. stochastic gradients, which can introduce considerable computational overhead, especially when faced with numerous iterations or expensive gradient sampling.

In most multi-gradient methods, subproblems are addressed by transforming the original min-max problem into a constrained optimization problem. This transformation is motivated by the fact that the resulting subproblem can be expressed as a minimum $\ell_2$-norm problem with simplex constraints, which can be efficiently solved using the Frank-Wolfe method \cite{beck2017first,sener2018multi}.
However,  as previously discussed,  analyzing the convergence of this approach becomes challenging in stochastic scenarios. Therefore, we consider directly solving the saddle point subproblem, as such problems in stochastic settings have been thoroughly studied and exhibit good convergence results \cite{beznosikov2023stochastic, lan2020first,yoon2021accelerated,tseng1995linear}.
Among these, variants of mirror descent (also known as Bregman gradient) method \cite{cesa2012mirror, duchi2012ergodic} are particularly prominent due to their adaptability to different geometries via the selection of appropriate Bregman functions, enabling faster convergence rates.
The stochastic mirror descent (SMD) method, initially introduced by Nemirovskij et al. in \cite{nemirovskij1983problem}, and extended to convex optimization in \cite{nemirovski2009robust} achieving sublinear convergence under appropriate assumptions. More recent studies have explored its application to nonsmooth nonconvex finite-sum problems \cite{li2022variance}, nonconvex nonconcave problems \cite{mertikopoulos2018optimistic} and weakly convex minmax problems \cite{rafique2022weakly}.

Inspired by SDMGrad and aiming to circumvent the overhead caused by triple sampling, we adopt the SMD as our subproblem optimizer, and propose a novel SMOO method called Multi-gradient Stochastic Mirror Descent (MSMD) method.
Our MSMD method offers several advantages. First, to the best of our knowledge, it is the first stochastic multi-gradient method to employ a saddle point optimizer for solving SMOO subproblems. This integration effectively reduces bias in updating directions during subproblem solving, thereby mitigating the challenge of analyzing the convergence in SMOO.
Secondly, by selecting an appropriate Bergman function, our method provides an analytical solution at each subproblem iteration, requiring only one stochastic gradient sample, resulting in a small computational burden per iteration. This makes our method well-suited for large-scale MOO problems, such as neural network training.
Furthermore, under reasonable assumptions, we demonstrate that our method achieves a convergence rate comparable to the state-of-art SMOO methods. By selecting different inner step size $\gamma_s$ for subproblem updates and outer step size $\alpha_k$ for the descent of the SMOO objective function, we establish convergence proofs under four distinct step size settings, as detailed in Table \ref{tab:convergence}. When the outer step size is varying, our method achieves a convergence rate of $O\left( {1/\ln \left( {K + 1} \right)} \right)$. Otherwise, it attains a faster convergence rate of ${O\left( {1/\sqrt K } \right)}$, comparable to that of SDMGrad. Finally, we evaluated our MSMD method on benchmark MOO test functions and widely recognized neural network training tasks. The numerical results demonstrate that MSMD is highly competitive when compared to both weighted methods with single-objective stochastic optimizer and state-of-the-art SMOO methods.

\begin{table}[htbp]
  \centering
  \renewcommand\arraystretch{1.5}
  \caption{Convergence rate results}
    \begin{tabular}{|c|c|c|c|}
    \hline
    ${\alpha_k}$&  ${\gamma_s}$ & Corollary &   Convergence rate \\
    \hline
    fixed& fixed& \ref{cor:fixa_fixg} &${O\left( {1/\sqrt K } \right)}$\\
    \hline
    fixed&varying& \ref{cor:fixa_alterg} &${O\left( {1/\sqrt K } \right)}$  \\
    \hline
    varying&fixed & \ref{cor:altera_fixg}&$O\left( {1/\ln \left( {K + 1} \right)} \right)$\\
    \hline
    varying&varying & \ref{cor:altera_alterg}&$O\left( {1/\ln \left( {K + 1} \right)} \right)$\\
     \hline
    \end{tabular}%
\label{tab:convergence}%
\end{table}%

The structure of this paper is as follows: Section \ref{sec:preliminaries} provide fundamental definitions, notations, deterministic and stochastic MOO methods, along with SMD method. In Section \ref{sec:ourmethod} we formally introduce our MSMD method, followed by a convergence analysis in Section \ref{sec:convergences}. Variants of MSMD with preferences are discussed in Section \ref{sec:preferences}.  Experimental results, showcasing the Pareto front on benchmark MOO test functions and the performance on MTL, are presented in Section  \ref{sec:experment}. Finally, we conclude in the last section.

\section{Preliminaries}
\label{sec:preliminaries}
In this section, we provide a comprehensive overview of the fundamental background knowledge related to MOO and its stochastic variations, along with an introduction to classical algorithms and their convergence analyses. Subsequently, we briefly explain the fundamentals of SMD method.

We provide notation throughout this work. For any natural number $n$, ${\left[ n \right] = \left\{ {1, \cdots ,n} \right\}}$ represents the set starting at $1$ and ending at $n$, $\mathbb{R}_+$ stands for all positive real numbers, $\mathbb{R}^n$ denotes the $n$-dimensional real space and ${\Delta ^m}$ represents the standard simplex in ${{\mathbb{R}^m}}$ given by
\[{\Delta ^m}: = \left\{ {x \in {\mathbb{R}^m}|\sum\limits_{i = 1}^m {{x_i} = 1,}~ x \ge 0} \right\}.\]
Moreover, ${\left\langle { \cdot , \cdot } \right\rangle }$ stand for the usual canonical inner product in ${\mathbb{R}^n}$, i.e. ${\left\langle {x,y} \right\rangle  = \sum\limits_{i = 1}^n {{x_i}{y_i}}  = {x^T}y}$. Likewise ${{\left\| x \right\|_2} = \sqrt {\left\langle {x,x} \right\rangle } }$ denote Euclidean norm, ${{\left\| x \right\|_\infty } = \max \left\{ {\left| {{x_1}} \right|, \cdots ,\left| {{x_n}} \right|} \right\}}$ denote infinite norm and ${{\left\| x \right\|_1} = \sum\limits_{i = 1}^n {\left| {{x_i}} \right|} }$ is $\ell_1$-norm. Meanwhile we can define the dual norm of any general norm ${{\left\| x \right\| }}$ on ${\mathbb{R}^n}$ to be ${{\left\| x \right\|_*} = \mathop {\sup }\limits_{\left\| y \right\| < 1} {x^T}y }$.  We denote that the metric projection onto the set $X$ by ${\prod\nolimits_X {\left(  \cdot  \right)} }$. Note that ${\prod\nolimits_X {\left( \cdot \right)} }$ is a nonexpanding operator, for example in the case of Euclidean norms we have ${\prod\nolimits_X {\left( x \right)}  = \mathop {\arg \min }\limits_{y \in X} {\left\| {x - y} \right\|_2}}$ and
\[{\left\| {\prod\nolimits_X {\left( x \right)}  - \prod\nolimits_X {\left( y \right)} } \right\|_2} \le {\left\| {x - y} \right\|_2}.\]
The notation ${{\rm O}\left(  \cdot  \right)}$ stands for the infinitesimal of the same order. By ${\left\lceil x \right\rceil }$ we denote the ceil function for the smallest integer greater than or equal to ${x \in \mathbb{R}}$. $\xi$ denote the stochastic vector with probability distribution $P$ and supported on set ${\Xi  \subset {\mathbb{R}^n}}$.

\subsection{Deterministic Multi-objective optimization}
\label{sec:moo}
MOO usually considers the following unconstrained optimization problem:
\begin{align}\label{eq:moo}
\mathop {\min }\limits_{x \in {\mathbb{R}^n}} F\left( x \right) = {\left( {{f_1}\left( x \right),{f_2}\left( x \right), \cdots ,{f_m}\left( x \right)} \right)^T},
\end{align}
where ${{f_i}\left( \cdot \right):{\mathbb{R}^n} \to \mathbb{R},i  \in \left[ m \right]}$ are continuously differentiable functions and $m\ge2$ is the number of objectives.
We denote the Jacobian matrix associated with $F$ as ${\nabla F\left(  \cdot  \right) = {\left( {\nabla {f_1}\left(  \cdot  \right),\nabla {f_2}\left(  \cdot  \right), \cdots ,\nabla {f_m}\left(  \cdot  \right)} \right)^T} \in {\mathbb{R}^{m \times n}}}$, where each ${\nabla {f_i}\left(  \cdot  \right)}$ represents the gradient of the $i$-th objective function.
In MOO, the objective function value is represented as a vector, which may lead to situations where two points cannot be directly compared. Therefore, it is crucial to consider the partial order relationship between vectors in ${\mathbb{R}^m}$. For any two points ${u,v \in {\mathbb{R}^m}}$, we define the following partial order relations:
 \[\begin{array}{l}
u < v \Leftrightarrow {u_i} < {v_i},\forall i  \in \left[ m \right],\\
u \le v \Leftrightarrow {u_i} \le {v_i},\forall i  \in \left[ m \right].
\end{array}\]
We say that $u$ dominates $v$ if $u \le v$ and $u \neq v$. The concept of multi-objective optimality is based on this dominance relation.

\begin{definition}(Pareto optimal point)
\label{def:Pareto optimal}
For any point ${x' \in {\mathbb{R}^n}}$ is Pareto optimal for problem (\ref{eq:moo}) if there dose not exist ${x \in {\mathbb{R}^n}}$ such that $F(x)$ dominates ${F(x')}$.
\end{definition}

In practice, achieving Pareto optimality is often challenging. Therefore, a more practical alternative is to consider weak Pareto optimality, which imposes a less stringent condition and is easier to implement.

\begin{definition}(weakly Pareto optimal point)
\label{def:weakly Pareto optimal}
For any point ${x' \in {\mathbb{R}^n}}$ is weakly Pareto optimal for problem (\ref{eq:moo}) if there dose not exist ${x \in {\mathbb{R}^n}}$ such that ${F(x)<F(x')}$.
\end{definition}

The Pareto set is defined as the collection of all Pareto optimal points, while the corresponding function values of these solutions form the Pareto front in ${\mathbb{R}^m}$. Under the assumption of differentiability, the concept of Pareto stationarity can be introduced. Specifically, if the objective functions in Problem (\ref{eq:moo}) exhibit convexity, Pareto stationarity becomes a sufficient condition for Pareto optimality.

\begin{definition}(Pareto stationary)
\label{def:Pareto stationary}
For any point ${x' \in {\mathbb{R}^n}}$ is Pareto stationary for problem (\ref{eq:moo}) if
\[\mathop {\min }\limits_{d \in {\mathbb{R}^n}} \mathop {\max }\limits_{i  \in \left[ m \right]} \nabla {f_i}{\left( {x'} \right)^T}d \ge  0.\]
\end{definition}
This definition indicates that if $x'$ is not Pareto stationary, there exist a direction $d$ that serves as a local descent direction of $F$ at $x'$.
The following lemma demonstrates the relationship between the concepts of Pareto stationarity, Pareto optimality and convexity.

\begin{lemma}(\cite{fliege2000steepest}, Theorem 3.1)
The following statements hold:

(i) if $x'$ is locally weakly Pareto optimal, then x is Pareto stationary for problem (\ref{eq:moo});

(ii) if $F$ is convex and $x'$ is Pareto stationary for problem (\ref{eq:moo}), then $x'$ is weakly Pareto optimal;

(iii) if $F$ is twice continuously differentiable, ${{\nabla ^2}{f_i}(x) \succ 0,i\in \left[ m \right]}$ and all ${x \in {\mathbb{R}^n}}$, and $x'$ is Pareto stationary for problem (\ref{eq:moo}), then $x'$ is Pareto optimal.
\end{lemma}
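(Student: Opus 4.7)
All three parts are natural to approach by contraposition, turning the Pareto-optimality conclusions into statements about the existence of a uniform descent direction.

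For part (i), the plan is to show the contrapositive: if $x'$ is not Pareto stationary, then there is a direction $d$ with $\max_{i \in [m]} \nabla f_i(x')^T d < 0$, hence $\nabla f_i(x')^T d < 0$ for every $i$. A first-order Taylor expansion of each $f_i$ at $x'$ along $d$ then yields, for all sufficiently small $t>0$, the inequality $f_i(x'+td) < f_i(x')$ simultaneously in $i$, which means $F(x'+td) < F(x')$ and contradicts local weak Pareto optimality. This part is essentially just Taylor and a uniform choice of step size.

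For part (ii), I would again use contraposition and start from $x$ with $F(x) < F(x')$. Convexity of each $f_i$ gives the gradient inequality $\nabla f_i(x')^T (x-x') \le f_i(x) - f_i(x') < 0$, so choosing $d = x - x'$ gives $\max_{i \in [m]} \nabla f_i(x')^T d < 0$, contradicting Pareto stationarity.

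Part (iii) is the delicate one and will be the main obstacle, because now we only assume $x$ dominates $x'$ (so $f_i(x) \le f_i(x')$ with at least one strict inequality). For indices $i$ where $f_i(x) < f_i(x')$ the argument of (ii) still gives $\nabla f_i(x')^T (x-x') < 0$. The trouble is the indices $i$ with $f_i(x) = f_i(x')$: convexity alone only yields $\nabla f_i(x')^T(x-x') \le 0$, which is not enough to contradict stationarity strictly. Here I would invoke the standing hypothesis $\nabla^2 f_i \succ 0$, which makes each $f_i$ strictly convex and its directional restriction $g_i(t) := f_i(x' + t(x-x'))$ strictly convex on $[0,1]$ with a strictly increasing derivative. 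Because $g_i(0) = g_i(1)$, strict convexity forces $g_i(t) < g_i(0)$ for all $t \in (0,1)$; if $g_i'(0)$ were zero then strict monotonicity of $g_i'$ would give $g_i'(t) > 0$ on $(0,1]$ and thus $g_i(t) > g_i(0)$, a contradiction. Hence $g_i'(0) = \nabla f_i(x')^T (x-x') < 0$ in this case as well.

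Combining the two cases, $\nabla f_i(x')^T (x-x') < 0$ holds for every $i \in [m]$, so $d = x-x'$ makes the min-max quantity in Definition \ref{def:Pareto stationary} strictly negative, contradicting Pareto stationarity of $x'$. The only nontrivial calculation in the whole proof is the one-dimensional strict-convexity argument just described; everything else is Taylor expansion and the convex gradient inequality.
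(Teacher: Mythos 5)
Your three-part argument is correct, and it is essentially the standard proof: the paper itself does not prove this lemma but imports it verbatim from Fliege and Svaiter (Theorem 3.1 of \cite{fliege2000steepest}), whose argument likewise runs through the contrapositive and the existence of a common descent direction, so there is nothing in the paper to diverge from. Parts (i) and (ii) are exactly the textbook Taylor and convex-gradient-inequality arguments (note that finiteness of $[m]$ is what lets you pick one step size $t$ working for all objectives in (i), which you implicitly use). For part (iii), your one-dimensional strict-convexity argument is valid (and you correctly need $x\neq x'$, which follows from dominance), but it can be shortened: $\nabla^2 f_i \succ 0$ makes each $f_i$ strictly convex, and strict convexity upgrades the gradient inequality itself to $f_i(x) > f_i(x') + \nabla f_i(x')^T(x-x')$ for $x\neq x'$, so $\nabla f_i(x')^T(x-x') < f_i(x)-f_i(x') \le 0$ for every $i$, including the tied indices, and $d = x-x'$ immediately contradicts Pareto stationarity without any case split.
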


In contrast to Pareto optimal point, Pareto stationary reflects local properties and is therefore frequently used as a condition for identifying local minima in non-convex MOO problems \cite{fliege2019complexity}.

In the subsequent sections, we will introduce MGDA, a classical gradient-based MOO method. Similar to the gradient descent method used in single objective optimization, MGDA iteratively updates based on gradients. At each iteration ${k \in \left[ K \right]}$, MGDA seeks to identify a direction $d_k$ that enables simultaneous descent of all objective functions by considering the following approximation:
\[\mathop {\min }\limits_{d \in {\mathbb{R}^n}} \mathop {\max }\limits_{i \in \left[ m \right]} {f_i}({x_k}) - {f_i}({x_k} + {\alpha _k}{d}) \approx  - {\alpha _k}\mathop {\min }\limits_{d \in {\mathbb{R}^n}} \mathop {\max }\limits_{i \in \left[ m \right]} \nabla {f_i}{\left( {{x_k}} \right)^T}{d}.\]
Adding a quadratic regularization term, we can get a unconstrained scalar-valued minimization problem
\[\mathop {\min }\limits_{d \in {\mathbb{R}^n}} \mathop {\max }\limits_{i \in \left[ m \right]} \nabla {f_i}{\left( {{x_k}} \right)^T}d + \frac{1}{2}\left\| d \right\|_2^2.\]
Since the objective function of this problem is proper, closed and strongly convex, there always exists a unique optimal solution. This solution corresponds to the steepest descent direction $d_k$ that we are seeking and can be obtained by solving the following optimization problem:
\begin{align}\label{eq:dk}
{d_k} = \mathop {\arg \min }\limits_{d \in {\mathbb{R}^n}} \mathop {\max }\limits_{i \in \left[ m \right]} \nabla {f_i}{\left( {{x_k}} \right)^T}d + \frac{1}{2}\left\| d \right\|_2^2.
\end{align}
Note that in order to eliminate the non-differentiability of the objective function in (\ref{eq:dk}), we reformulate the problem as follows:
\begin{align}\label{eq:primal_p}
\begin{array}{l}
\min \tau \\
s.t.\nabla {f_i}{\left( {{x_k}} \right)^T}d + \frac{1}{2}\left\| d \right\|_2^2 - \tau  \le 0,i \in \left[ m \right],
\end{array}
\end{align}
which is a convex constrained problem with ${\left( {\tau ,d} \right) \in \mathbb{R} \times {\mathbb{R}^n}}$.
The function
$\phi \left( {d,\lambda } \right):{\mathbb{R}^n} \times {\Delta ^m} \to \mathbb{R}$, with ${\lambda  = \left( {{\lambda _1}, \cdots ,{\lambda _m}} \right)}$ is defined as:
\[\phi \left( {d,\lambda } \right) = \sum\limits_{i = 1}^m {{\lambda _i}} \left( {\left\langle {\nabla {f_i}\left( {{x_k}} \right),d} \right\rangle  + \frac{1}{2}\left\| d \right\|_2^2} \right) = \left\langle {\nabla F{{\left( {{x_k}} \right)}}\lambda ,d} \right\rangle  + \frac{1}{2}\left\| d \right\|_2^2,\]
where ${\nabla {f_i}\left( {{x_k}} \right)}$ represents the Jacobian of the objective functions at iteration $k$. Thus, subproblem (\ref{eq:dk}) can be formulated as a convex-concave saddle point problem:
\begin{align}\label{eq:Deterministic sub}
\mathop {\arg \min }\limits_{d \in {\mathbb{R}^n}} \mathop {\max }\limits_{\lambda  \in {\Delta ^m}} \phi \left( {d,\lambda } \right).
\end{align}
Let ${\left( {{d_k},{\lambda _k}} \right)}$ denote the saddle point of the above problem. According to the strong duality theorem, the primal and dual optimization problems are solvable with equal optimal values, leading to the following relationship:
\begin{align}\label{eq:moo_optimal}
\left( {{d_k},{\lambda _k}} \right) = \mathop {\arg \min }\limits_{d \in {\mathbb{R}^n}} \left[ {\mathop {\max }\limits_{\lambda  \in {\Delta ^m}} \phi \left( {d,\lambda } \right)} \right] = \mathop {\max }\limits_{\lambda  \in {\Delta ^m}} \left[ {\mathop {\arg \min }\limits_{d \in {\mathbb{R}^n}} \phi \left( {d,\lambda } \right)} \right].
\end{align}

The optimal solution pairs ${\left( {{d_k},{\lambda _k}} \right)}$  constitute the set of saddle points of ${\phi \left( {d,\lambda } \right)}$ over ${{\mathbb{R}^n} \times {\Delta ^m}}$.
By applying the Karush-Kuhn-Tucker (KKT) conditions, we can determine the optimal ${\lambda_k}$ by solving a minimal $\ell_2$-norm element with simplex constraints:
\[{\lambda _k} = \mathop {\arg \min }\limits_{\lambda  \in {\Delta ^m}} \left\| {\nabla F{{\left( {{x_k}} \right)}}\lambda } \right\|_2^2.\]
Finally, the common descent direction is given by
\begin{align}\label{eq:dk_optimal}
 {{d_k} =  - \sum\limits_{i = 1}^m {{\lambda _i}\nabla {f_i}\left( {{x_k}} \right)} }.
\end{align}
In addition to solving the dual problem to obtain the direction of steepest descent, there exists a class of gradient-based MOO methods that directly modify gradients. A prominent example is PCGrad \cite{yu2020gradient}, which addresses conflicting gradients by projecting them onto the normal plane of another gradient to obtain the update direction. Specifically, for two conflicting gradients ${\nabla {f_i}\left( {{x_k}} \right)}$ and ${\nabla {f_j}\left( {{x_k}} \right)}$, ${i,j \in \left[ m \right]}$, ${i \ne j}$  a conflict arises when ${\nabla {f_i}{\left( {{x_k}} \right)^T}\nabla {f_j}\left( {{x_k}} \right) < 0}$. Let the projected gradient be denoted by ${\upsilon _i^{PC}\left( {{x_k}} \right) = \nabla {f_i}\left( {{x_k}} \right)}$. The projected gradients are then iteratively updated using the following rule:
\[\upsilon _i^{PC}\left( {{x_k}} \right) = \upsilon _i^{PC}\left( {{x_k}} \right) - \frac{{\upsilon _i^{PC}{{\left( {{x_k}} \right)}^T}\nabla {f_j}\left( {{x_k}} \right)}}{{\left\| {\nabla {f_j}\left( {{x_k}} \right)} \right\|_2^2}}\nabla {f_j}\left( {{x_k}} \right),\]
and this update continues until the gradients are no longer in conflict with ${\upsilon _i^{PC}\left( {{x_k}} \right)}$. While gradient-based methods for deterministic MOO have been extensively studied, their stochastic variants have not yet been thoroughly investigated.
\subsection{Stochastic Multi-objective Optimization}
Deterministic MOO can be effectively addressed by gradient manipulation algorithms, which not only provide favorable numerical results but also come with solid theoretical guarantees \cite{fukuda2014survey}. However, for large-scale MOO problems, such as those encountered in deep learning, obtaining the full gradient ${{\nabla F\left( {{x_k}} \right)}}$ is computationally infeasible. As a result, training with mini-batch gradients has become a practical solution \cite{zhou2022convergence}.
In this context, we can introduce a stochastic gradient approach, where the gradient is computed based on a stochastic vector ${\xi }$, representing the stochasticity of mini-batch selection. Specifically, we define the stochastic gradient as:
\[\nabla F{\left( {{x_k};\xi } \right)^T} = {\left( {\nabla {f_1}\left( {{x_k};\xi } \right),\nabla {f_2}\left( {{x_k};\xi } \right), \cdots ,\nabla {f_m}\left( {{x_k};\xi } \right)} \right)^T} \in {\mathbb{R}^m} \times {\mathbb{R}^n} \times \Xi. \]
Let ${{\xi _{\left[ S \right]}} = \left\{ {{\xi _1}, \cdots ,{\xi _S}} \right\}}$ denote the history of the process from ${{{\xi _1}}}$ to ${{{\xi _S}}}$.
We assume that the stochastic gradient is an unbiased estimator of the full gradient, i.e., ${\mathbb{E}\left[ {\nabla F\left( {{x_k};\xi } \right)} \right] = \nabla F\left( {{x_k}} \right) = \nabla {F_k}}$. We define the function ${\phi \left( {d,\lambda ;\xi } \right)}$ as:
\[\phi \left( {d,\lambda ;\xi } \right) = \left\langle {\nabla F{{\left( {{x_k};\xi } \right)}}\lambda ,d} \right\rangle  + \frac{1}{2}\left\| d \right\|_2^2,\]
which is linear with respect to the stochastic gradient ${{\nabla F\left( {{x_k},\xi } \right)}}$. Therefore, the expectation of this function is also unbiased, i.e. ${\mathbb{E}\left[ {\phi \left( {d,\lambda ;\xi } \right)} \right] = \phi \left( {d,\lambda } \right)}$.
As a result, the problem from the deterministic case (\ref{eq:Deterministic sub}) can now be written in the stochastic context as:
\begin{align}\label{eq:dk_1}
\mathop {\arg \min }\limits_{d \in {\mathbb{R}^n}} \mathop {\max }\limits_{\lambda  \in {\Delta ^m}} \left\{ {\mathbb{E}\left[ {\phi \left( {d,\lambda ;\xi } \right)} \right] = \left\langle {\mathbb{E}\left[ {\nabla F\left( {{x_k};\xi } \right)} \right]\lambda ,d} \right\rangle  + \frac{1}{2}\left\| d \right\|_2^2} \right\}.
\end{align}
For such a stochastic SMOO problem, Liu and Vicente et al. \cite{liu2021stochastic} were among the first to explore a stochastic variant of the MGDA. By directly replacing the full gradient in Problem (\ref{eq:Deterministic sub}) with a single sampled stochastic gradient, they proposed the following formulation:
\[\mathop {\arg \min }\limits_{d \in {\mathbb{R}^n}} \mathop {\max }\limits_{\lambda  \in {\Delta ^m}} \left\langle {\nabla F\left( {{x_k};\xi } \right)\lambda ,d} \right\rangle  + \frac{1}{2}\left\| d \right\|_2^2.\]
Here, the update for $x_k$ is along the direction $d_k$, given by:
\begin{align}\label{eq:smg}
{d_k\left( \xi  \right)} = \nabla F\left( {{x_k},\xi } \right){\lambda _k}\left( \xi  \right)~~s.t.~~{\lambda _k}\left( \xi  \right) \in \mathop {\arg \min }\limits_{\lambda  \in {\Delta ^m}} \left\| {\nabla F\left( {{x_k},\xi } \right)\lambda } \right\|_2^2.
\end{align}
However, due to the nonlinearity of the solution mapping with respect to the stochastic gradient, direct substitution can introduce biased multi-gradient estimates. Specifically ${\mathbb{E}\left[ {{d_k}\left( \xi  \right)} \right] = \mathbb{E}\left[ {\nabla F\left( {{x_k};\xi } \right){\lambda _k}\left( \xi  \right)} \right] \neq \nabla F\left( {{x_k}} \right){\lambda _k}}$, leading to biased updates.
To mitigate this bias, Liu et al. \cite{liu2021stochastic} proposed a method that reduces the variance of the stochastic gradient by linearly increasing the batch size with each iteration. However, when discussing the convergence of this modified MGDA method, Liu et al. made an unreasonable assumption that the solution mapping of  $\lambda_k\left( \xi  \right)$ in (\ref{eq:smg}) is Lipschitz continuous with respect to $\nabla F({x_k;\xi})$, which has been proven incorrect in \cite{zhou2022convergence}.
In response, to develop SMOO methods with convergence guarantees, several recent studies have introduced alternative techniques for updating the weight ${\lambda_k}$. Yang et al. \cite{yang2023learning}, building on \cite{svaiter2018multiobjective}, demonstrated that ${\lambda_k\left( \xi  \right)}$ is Hölder continuity with respect to the gradient ${{\nabla F\left( {{x_k};\xi } \right)}}$. This result facilitates a convergent SMOO method achieved through the dynamic augmentation of sample size, defined as:
\begin{align*}
{N_k} = \max \left\{ {{N_B},{k^q}} \right\},
\end{align*}
where the rate of sample increase is determined by the parameter $q$, and $N_B$ is a threshold for the dynamic sample size. With this dynamic step setting, the update direction $d_k\left( \xi  \right)$ is expressed as:
\[{d_k}\left( \xi  \right) = \frac{1}{{{N_k}}}\sum\limits_{j = 1}^{{N_k}} {\nabla F\left( {{x_k},{\xi _{k,j}}} \right)} {\lambda _k}\left( \xi  \right)\;\;s.t.\;\;{\lambda _k}\left( \xi  \right) \in \mathop {\arg \min }\limits_{\lambda  \in {\Delta ^m}} \left\| {\frac{1}{{{N_k}}}\sum\limits_{j = 1}^{{N_k}} {\nabla F\left( {{x_k},{\xi _{k,j}}} \right)} \lambda } \right\|_2^2.\]
By obtaining a sufficiently small $q$, this method easily has not much computational overhead and has convergence guarantees.
Unlike methods that reduce the variance of the stochastic gradient by increasing the sample size at each step, Fernando et al. \cite{fernando2023mitigating} propose MoCo method, a "tracking" variable iteration method, using historical information to correct the current stochastic gradient. They update ${x_k}$ based on this approximate gradient
\[{y_{k-1,i}} = \prod\limits_C {\left( {{y_{k - 2,i}} - {\beta _{k - 2}}\left( {{y_{k - 2,i}} - \nabla {f_i}\left( {{x_{k - 2}},{\xi _{k - 2}}} \right)} \right)} \right)} ,i \in \left[ m \right]\]
where $C$ is a bounded set and $\beta_{k-2}$ is the step size. In order to ensure the convergence of this method, they adding regular terms to the subproblems of $\lambda_k$ to obtain a unique solution have
\[{d_k}\left( \xi  \right) =  - {Y_k}{\lambda _{k,\rho }}~~s.t~~{\lambda _{k,\rho }} = \prod\limits_{{\Delta ^m}} {\left( {{\lambda _{{k-1},\rho }} - {\gamma _{k-1}}\left( {Y_{k-1}^T{Y_{k-1}} + \rho I} \right){\lambda _{k - 1,\rho }}} \right)} ,\]
where ${\gamma_{k-1}}$ is the step size,  ${{Y_{k-1}} = \left( {{y_{k-1,1}}, \cdots {y_{{k-1},m}}} \right) \in {\mathbb{R}^{n \times m}}}$ is the tracking variable for the gradient, the ${\rho  >0}$ denoted the regularization constant.

Similarly getting information from history iteration, Zhou et al. \cite{zhou2022convergence} started from the correlation between the parameter ${\lambda_k}$ and the update direction ${d_k}$, considered the history weight ${\left\{ {{ \lambda _1}, \cdots ,{ \lambda _{k - 1}}} \right\}}$, and proposed correlation-reduced stochastic multi-objective gradient manipulation (CR-MOGM) to reduction those connection.
\[{ \lambda _k}\left( \xi  \right) = {\beta _k}{ \lambda _{k - 1}} + \left( {1 - {\beta _k}} \right){\lambda _k}\left( \xi  \right)\;\;s.t\;\;{\lambda _k}\left( \xi  \right) \in \mathop {\arg \min }\limits_{\lambda  \in {\Delta ^m}} \left\| {\nabla F\left( {{x_k};\xi } \right)\lambda } \right\|_2^2,\]
the update direction is ${{d_k}\left( \xi  \right) =  - \nabla F\left( {{x_k},\xi } \right){{ \lambda }_k\left( \xi  \right)}}$ and the parameter $\beta_k$ controls the dependence between ${{\lambda _k}\left( \xi  \right)}$ and ${d_k\left( \xi  \right)}$. By choosing $\beta_k$ close enough to 1, the correlation can be reduced, thus correcting the wrong direction generated by the stochastic gradient into a positive direction and allowing the convergence of this method.
\subsection{Stochastic mirror descent method}
\label{sec:smd}
Mirror descent is a first-order optimization method specified by a distance generating function ${\omega \left(  \cdot  \right):X \to \mathbb{R}}$ . ${\omega \left(  \cdot  \right)}$  is continuously differentiable and strongly convex modulus ${\eta }$ with respect to ${\left\|  \cdot  \right\|}$ , i.e.
\[{\left( {x' - x} \right)^T}\left( {\nabla \omega \left( {x'} \right) - \nabla \omega \left( x \right)} \right) \ge \eta {\left\| {x' - x} \right\|^2},\]
where ${{X^o} = \left\{ {x \in X|\partial \omega \left( x \right) \ne \phi } \right\}}$ is convex and always contains the relative interior of $X$. The nonnegative function ${V\left( { \cdot , \cdot } \right):{X^o} \times X \to {\mathbb{R}_ + }}$ called Bregman distance \cite{bregman1967relaxation} which is defined as follows:
\begin{align}\label{eq:V}
V\left( {x,u} \right) = \omega \left( u \right) - \left[ {\omega \left( x \right) + \nabla \omega {{\left( x \right)}^T}\left( {u - x} \right)} \right], \forall x \in {X^o},u \in X,
\end{align}
note that ${V\left( {x, \cdot } \right)}$ is also a strongly convex modulus $\eta$ with respect to the norm ${\left\|  \cdot  \right\|}$. We define the prox-mapping ${{P_x}\left(  \cdot  \right):{\mathbb{R}^n} \to {X^o}}$
associated with point ${x \in {X^o}}$ and function ${\omega \left( x \right)}$ as
\begin{align}\label{eq:pxy}
{P_x}\left( u \right) = \mathop {\arg \min }\limits_{u \in X} \left\{ {{u^T}\left( {u - x} \right) + V\left( {x,u} \right)} \right\}.
\end{align}
It is easy to see that this prox-mapping is well-defined, since the function $\omega$ is continuous on compact set $X$ and ${V\left( {x, \cdot } \right)}$ is strongly convex on $X^o$, the optimal of (\ref{eq:pxy}) is unique and belong to ${X^o}$.
Based on the aforementioned definition, we now present how the SMD method can be utilized to solve the convex-concave stochastic saddle point problem. Consider the following minimax (also known as saddle point) problem on nonempty bounded closed convex sets ${X \subset {\mathbb{R}^n}}$ and ${Y \subset {\mathbb{R}^m}}$ with stochastic vector ${\xi  \in \Xi  \subset {\mathbb{R}^n}}$,
\begin{align}\label{eq:smd_phi}
\mathop {\min }\limits_{x \in X} \mathop {\max }\limits_{y \in Y} \left\{ {\phi \left( {x,y} \right) = \mathbb{E}\left[ {\phi \left( {x,y;\xi } \right)} \right]} \right\}.
\end{align}
Assume that the function ${{\phi \left( {x,y;\xi } \right)}}$ is differentiable with respect to both $x$ and $y$, and the expectation ${{\phi \left( {x,y} \right)}}$ is convex in ${X \subset {\mathbb{R}^n}}$  and concave in ${Y \subset {\mathbb{R}^m}}$, we can obtain the gradient as
\[\nabla \phi \left( {x,y;\xi } \right) = \left[ {\begin{array}{*{20}{c}}
{\nabla {\phi _x}\left( {x,y;\xi } \right)}\\
{ - \nabla {\phi _y}\left( {x,y;\xi } \right)}
\end{array}} \right].\]
Specifically, we set ${{\omega _x}\left(  \cdot  \right):X \to \mathbb{R}}$ modulus ${\eta_x}$ with respect to ${{\left\|  \cdot  \right\|_x}}$ and ${{\omega _y}\left(  \cdot  \right):Y \to \mathbb{R}}$ modulus ${\eta_y}$ with respect to ${{\left\|  \cdot  \right\|_y}}$ be the distance generating functions on $X$ and $Y$ respectively. Followed by the definition of
\begin{align}\label{eq:D}
{D_{{\omega _x},X}} = \sqrt {\mathop {\max }\limits_{x \in X} {\omega _x}\left( x \right) - \mathop {\min }\limits_{x \in X} {\omega _x}\left( x \right)} ,
\end{align}
and simplifying the notation ${\left( {x,y} \right)}$ to ${z \in Z = X \times Y}$, we equip on $Z$ the norm
\[\left\| z \right\| = \left\| {\left( {x,y} \right)} \right\| = \sqrt {\frac{{{\eta _x}}}{{2D_{{\omega _x},X}^2}}\left\| x \right\|_x^2 + \frac{{{\eta _y}}}{{2D_{{\omega _y},Y}^2}}\left\| y \right\|_y^2}, \]
and its dual norm is
\[{\left\| z \right\|_*} = {\left\| {\left( {x,y} \right)} \right\|_*} = \sqrt {\frac{{2D_{{\omega _x},X}^2}}{{{\eta _x}}}\left\| x \right\|_{*,x}^2 + \frac{{2D_{{\omega _y},Y}^2}}{{{\eta _y}}}\left\| y \right\|_{*,y}^2} .\]
For ${z \in Z}$ we can get distance generating function ${{\omega _z}\left( z \right)}$ modulus $\eta_z = 1$ with respect to the norm ${\left\|  \cdot  \right\|}$ as:
\[{\omega _z}\left( z \right) = \frac{{{\omega _x}\left( x \right)}}{{2D_{{\omega _x},X}^2}} + \frac{{{\omega _y}\left( y \right)}}{{2D_{{\omega _y},Y}^2}}.\]
Drawing on Definition (\ref{eq:D}) and a straightforward computation, we can derive ${{D_{{\omega _z},Z}} = 1}$. By utilizing the aforementioned setup and definitions (\ref{eq:V}) and (\ref{eq:pxy}), we can write the Bergman distance associated with ${{\omega _z}\left( z \right)}$ on ${Z}$ as ${V\left( {z,u} \right):{Z^o} \times Z}$, along with the corresponding prox-mapping ${{P_z}\left(  \cdot  \right):{\mathbb{R}^{n + m}} \to {Z^o}}$.
The upper bound of dual norm of the gradient ${\nabla \phi \left( {x,y;\xi } \right)}$ can be assumed by two positive constants ${M_{*,x}^2}$ and ${M_{*,y}^2}$, i.e.
\begin{center}
$\mathbb{E}\left[ {\left\| {\nabla {\phi _x}\left( {x,y;\xi } \right)} \right\|_{*,x}^2} \right] \le M_{*,x}^2$ and $\mathbb{E}\left[ {\left\| {\nabla {\phi _y}\left( {x,y;\xi } \right)} \right\|_{*,y}^2} \right] \le M_{*,y}^2.$
\end{center}

Let
\[M_*^2 = \frac{{2D_{{\omega _x},X}^2}}{{{\eta _x}}}M_{*,x}^2 + \frac{{2D_{{\omega _y},Y}^2}}{{{\eta _y}}}M_{*,y}^2,\]
we have
\[\mathbb{E} {\left\| {\nabla \phi \left( {z;\xi } \right)} \right\|_{*}^2} \le M_*^2.\]

With step size ${{{\gamma_s}}}$ , we can defined the updates of SMD as:
\begin{align}\label{eq:iterative updates}
{z_{s + 1}} = {P_{{z_s}}}\left( {{\gamma_s}\nabla \phi \left( {{z_s};\xi } \right)} \right),
\end{align}
Similar to \cite{nemirovski2009robust}, we consider the search point $ z_P^S$ as a moving average of the approximate solutions, with ${s = P, \cdots ,S}$ yields
\begin{align}\label{eq:approximate solution}
{ z_{P}^S} = \frac{{\sum\limits_{s = P}^{S} {{\gamma_s}} {z_{s}}}}{{\sum\limits_{s = P}^{S} {{\gamma_s}} }}.
\end{align}
We refer to (\ref{eq:iterative updates}) and (\ref{eq:approximate solution}) as the SMD method. With a suitable distance generating function, assuming that the norm of $z$ is bounded, it can be obtained that the sequence generated by (\ref{eq:iterative updates}) and (\ref{eq:approximate solution}) diminishes the dual gap of the objective function at a sublinear rate.
Based on the fact that $V$ is strongly convex, the following lemma can be easily obtained from the optimality conditions.
\begin{lemma} (\cite{nemirovski2009robust} Lemma 2.1)
\label{lemma:1}
For any ${u \in X}$, $x \in {X^o}$ and ${y \in {\mathbb{R}^n}}$, one has
\begin{align}\label{eq:lemma1_1}
V\left( {{P_x}\left( y \right),u} \right) \le V\left( {x,u} \right) + {y^T}\left( {u - x} \right) + \frac{{\left\| y \right\|_*^2}}{{2\eta }}.
\end{align}
\end{lemma}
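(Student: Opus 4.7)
The plan is to mimic the standard three-point argument for Bregman proximal steps. Let $p := P_x(y)$. Since $p$ is the minimizer over $X$ of the convex function $v \mapsto y^T(v - x) + V(x,v)$ and $V(x,\cdot)$ is differentiable on $X^o$ with $\nabla_v V(x,v) = \nabla\omega(v) - \nabla\omega(x)$, the first-order optimality condition reads
\begin{equation*}
\bigl\langle y + \nabla\omega(p) - \nabla\omega(x),\, u - p\bigr\rangle \ge 0 \qquad \forall\, u \in X,
\end{equation*}
which I will rewrite as $\langle \nabla\omega(p) - \nabla\omega(x), u - p\rangle \ge \langle y, p - u\rangle$.

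Next, I would establish the three-point identity
\begin{equation*}
V(p,u) = V(x,u) - V(x,p) - \bigl\langle \nabla\omega(p) - \nabla\omega(x),\, u - p\bigr\rangle
\end{equation*}
by direct substitution into the defining formula (\ref{eq:V}) for $V$; all $\omega(u)$ terms cancel and the remaining terms rearrange as claimed. Plugging the optimality inequality into this identity yields
\begin{equation*}
V(p,u) \le V(x,u) - V(x,p) + \langle y, u - p\rangle = V(x,u) + y^T(u-x) + \bigl[\,y^T(x-p) - V(x,p)\,\bigr].
\end{equation*}

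It remains to bound the bracketed term by $\|y\|_*^2/(2\eta)$. Here I would invoke two facts: the definition of the dual norm gives $y^T(x-p) \le \|y\|_* \|x-p\|$, and strong convexity of $V(x,\cdot)$ with modulus $\eta$ gives $V(x,p) \ge \tfrac{\eta}{2}\|x-p\|^2$. Therefore
\begin{equation*}
y^T(x-p) - V(x,p) \le \|y\|_*\|x-p\| - \tfrac{\eta}{2}\|x-p\|^2 \le \max_{t \ge 0}\Bigl\{\|y\|_* t - \tfrac{\eta}{2} t^2\Bigr\} = \frac{\|y\|_*^2}{2\eta},
\end{equation*}
which is the Fenchel–Young (completion-of-the-square) inequality applied to $t = \|x-p\|$. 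Combining this with the previous display gives (\ref{eq:lemma1_1}).

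\textbf{Where the difficulty lies.} None of the individual steps is deep; the only place demanding care is the algebraic verification of the three-point identity and the sign bookkeeping when translating the optimality condition (the prox-mapping in (\ref{eq:pxy}) uses $y^T(v-x)$, so the gradient of the outer term with respect to $v$ is simply $y$, but one has to remember that the inner-product inequality points in the direction $u - p$, not $p - u$). Once that is handled correctly, the strong convexity of $V(x,\cdot)$ together with Fenchel–Young delivers the constant $1/(2\eta)$ automatically.
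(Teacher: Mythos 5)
Your proof is correct and follows exactly the route the paper has in mind: the lemma is quoted from Nemirovski et al.\ (2009, Lemma 2.1), and the paper's only remark — that it ``can be easily obtained from the optimality conditions'' and the strong convexity of $V$ — is precisely your argument (first-order optimality of the prox step, the three-point identity for the Bregman distance, strong convexity $V(x,p)\ge\tfrac{\eta}{2}\|x-p\|^2$, and Fenchel--Young to produce $\|y\|_*^2/(2\eta)$). No gaps; the sign bookkeeping and the identity check in your write-up are exactly the standard derivation.
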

In the above setting, Nemirovski et al. in \cite{nemirovski2009robust} obtained the following convergence rate results.
\begin{lemma}\cite{nemirovski2009robust}
\label{lemma:epslion}
For any ${x \in \left[ X \right]}$, ${y \in \left[ Y \right]}$, the following inequality holds:
\[\begin{array}{l}
\mathbb{E}\left[ {\mathop {\max }\limits_{y \in Y} \phi \left( { x_1^S,y} \right) - \mathop {\min }\limits_{x \in X} \phi \left( {x, y_1^S} \right)} \right] \le {\left[ {\sum\limits_{s = 1}^S {{\gamma _s}} } \right]^{ - 1}}\mathbb{E}\left[ {2 + \frac{5}{2}M_*^2\sum\limits_{s = 1}^S {\gamma _s^2} } \right].
\end{array}\]
\end{lemma}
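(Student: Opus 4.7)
The plan is to follow the classical saddle-point analysis of stochastic mirror descent, organized around Lemma \ref{lemma:1} applied both to the actual iterates and to an auxiliary ``ghost'' sequence that absorbs the stochastic noise. First, I will apply Lemma \ref{lemma:1} to the update (\ref{eq:iterative updates}) with $x = z_s$, the prox argument equal to $\gamma_s \nabla \phi(z_s;\xi_s)$, and an arbitrary $u \in Z$. Since $\omega_z$ has modulus $\eta_z = 1$ with respect to $\|\cdot\|$, this yields
\[
\gamma_s \nabla \phi(z_s;\xi_s)^T(z_s - u) \le V(z_s, u) - V(z_{s+1}, u) + \tfrac{1}{2}\gamma_s^2 \|\nabla \phi(z_s;\xi_s)\|_*^2.
\]
Telescoping from $s=1$ to $S$ and using $V(z_{S+1},u) \ge 0$ together with $V(z_1,u) \le D_{\omega_z,Z}^2 = 1$ (taking $z_1 = \arg\min_Z \omega_z$), I obtain a running bound on $\sum_{s=1}^S \gamma_s \nabla \phi(z_s;\xi_s)^T(z_s - u)$.

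Next, I will decompose the stochastic gradient into its mean and a martingale-difference noise, $\nabla \phi(z_s;\xi_s) = \nabla \phi(z_s) + \Delta_s$ with $\mathbb{E}[\Delta_s \mid \xi_{[s-1]}] = 0$. By the convexity of $\phi(\cdot,y)$ and concavity of $\phi(x,\cdot)$ applied to the weighted averages $x_1^S$ and $y_1^S$ from (\ref{eq:approximate solution}), together with the monotone-operator inequality
\[
\phi(x_s, y) - \phi(x, y_s) \le \nabla \phi(z_s)^T (z_s - (x,y)),
\]
the duality gap at $(x_1^S, y_1^S)$ is controlled by $[\sum_s \gamma_s]^{-1} \max_{u \in Z} \sum_s \gamma_s \nabla \phi(z_s)^T(z_s - u)$. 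Substituting the decomposition reduces the problem to bounding the noise term $\max_{u \in Z}\sum_s \gamma_s \Delta_s^T (u - z_s)$ in expectation.

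The main obstacle is exactly this noise term, because $u$ and $\Delta_s$ cannot be decoupled by simply conditioning. I will handle it with the standard ghost-sequence trick: define $w_1 = z_1$ and $w_{s+1} = P_{w_s}(-\gamma_s \Delta_s)$, and apply Lemma \ref{lemma:1} to this auxiliary recursion to obtain
\[
\sum_{s=1}^S \gamma_s \Delta_s^T (u - w_s) \le V(w_1, u) + \tfrac{1}{2}\sum_{s=1}^S \gamma_s^2 \|\Delta_s\|_*^2 \le 1 + \tfrac{1}{2}\sum_{s=1}^S \gamma_s^2 \|\Delta_s\|_*^2.
\]
Writing $\Delta_s^T(u - z_s) = \Delta_s^T(u - w_s) + \Delta_s^T(w_s - z_s)$, the second piece has zero conditional expectation since both $w_s$ and $z_s$ are measurable with respect to $\xi_{[s-1]}$, so it vanishes in expectation.

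Finally, I will combine the two bounds and take expectations. The deterministic term contributes $1 + 1 = 2$. The gradient-squared term contributes $\tfrac{1}{2} M_*^2 \sum_s \gamma_s^2$ from the original iteration and $\tfrac{1}{2}\mathbb{E}\|\Delta_s\|_*^2 \le \tfrac{1}{2}\cdot 4 M_*^2$ per step from the ghost iteration, where the factor $4$ follows from $\|\Delta_s\|_*^2 \le 2\|\nabla \phi(z_s;\xi_s)\|_*^2 + 2\|\nabla \phi(z_s)\|_*^2$ and Jensen's inequality applied to the bound $\mathbb{E}\|\nabla \phi(z;\xi)\|_*^2 \le M_*^2$. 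Summing gives the coefficient $\tfrac{1}{2} + 2 = \tfrac{5}{2}$ in front of $M_*^2 \sum_s \gamma_s^2$, yielding the claimed bound after dividing by $\sum_s \gamma_s$.
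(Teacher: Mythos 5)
Your argument is correct and is essentially the standard proof of this result from the cited source \cite{nemirovski2009robust}: prox-inequality on the iterates, convexity--concavity to pass to the weighted averages, and the ghost sequence (the paper's Lemma \ref{lemma:2}) to kill the noise term, with the constants $2$ and $\tfrac{5}{2}M_*^2$ arising exactly as you compute. This is also the same machinery the paper itself redeploys in the proof of Theorem \ref{th1}, so there is nothing to flag.
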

By employing different step size ${\gamma_s}$ settings, Nemirovski et al. \cite{nemirovski2009robust} achieved sublinear convergence rate results. The proofs for our MSMD method draw significant inspiration from their work. We then provide a comprehensive description of how we utilize the SMD method to solve the SMOO problem. Furthermore, we demonstrate that incorporating SMD iterations effectively addresses the challenge of unbiased estimation of subproblem solutions in SMOO proofs.
\section{Our Method}
\label{sec:ourmethod}
We propose a novel SMOO method based on the SMD method, named Multiple gradient Stochastic Mirror Descent method.
First we make some standard assumptions, consistent with those used in existing SMOO methods \cite{zhou2022convergence,liu2021stochastic,fernando2023mitigating}. The first assumption is that the objective function is lower bounded, differentiable and the gradient is $L-$Lipschitz continuous.
\begin{assumption}\label{ass_f}
For all objective functions ${{{f_i}\left( x_k \right)}}$, ${i \in [M]}$ and iterates ${k \in [K]}$, the following statements hold:

(a) ${{{f_i}({x_k})}}$ is bounded from below
\[\mathbb{E}\left[ {{f_i}({x_k})} \right] \ge {F_{\inf }} >  - \infty. \]

(b) ${f_i(x_k)}$ is differentiable at any point $x_k$.

(c) ${\nabla {f_i}\left( {{x_k}} \right)}$ is Lipschitz continuous with constant $L$.
\end{assumption}

This assumption ensures that the gradient does not change too rapidly, which is critical for controlling the convergence behavior of the algorithm. Similar to the setting in single-objective stochastic optimization \cite{hardt2016train, johnson2013accelerating}, for each objective ${i \in [m]}$ we assume that unbiased stochastic estimates of the objective gradients can be obtained.
For any objective gradient, we assume that the Euclidean norm and the variance of stochastic gradients are bounded by two positive constants, respectively.
\begin{assumption}\label{ass_gradient}
For all objective functions ${{{f_i}\left( x_k \right)}}$, ${i \in [m]}$, iterates ${k \in [K]}$ and i.i.d stochastic variable ${\xi  \in \Xi }$, we have access to the individual stochastic gradients ${{\nabla {f_i}\left( {{x_k},{\xi}} \right)}}$ which are unbiased estimates of ${{\nabla {f_i}\left( {{x_k}} \right)}}$. Specifically, the gradients satisfies the unbiasedness condition:
\begin{align*}
{\nabla {f_i}\left( {{x_k}} \right) = {\mathbb{E}}\left[ {\nabla {f_i}\left( {{x_k},\xi } \right)} \right]},
\end{align*}
Moreover, the Euclidean norm of the stochastic gradients are bounded by a positive constant ${{C_f\ge 0}}$ i.e.
\[\mathbb{E} {\left\| {\nabla {f_i}({x_k},\xi )} \right\|_2^2} \le C_f^2,\]
and the variance are also bounded by ${\delta \ge 0}$ i.e.
\begin{align}\label{eq:bound_var_f}
  \mathbb{E}{\left\| {\nabla {f_i}\left( {{x_k};\xi } \right) - \nabla {f_i}\left( {{x_k}} \right)} \right\|_2^2}  \le {\delta ^2}.
\end{align}
\end{assumption}
These assumptions ensure that the stochastic gradient estimates remain well-behaved in terms of their magnitude and variance, which are crucial for analyzing the convergence properties of the SMOO methods. By leveraging the upper bound of the stochastic gradient for each objective function ${{f_i}\left(  \cdot  \right)}$ in Assumption \ref{ass_gradient}, we can readily derive an upper bound for the stochastic gradient matrix and its variance
\begin{align}
\label{eq:bound_marx_F}
\mathbb{E}\left\| {\nabla {F_k}\left( {{\xi}} \right)} \right\|_2^2 \le mC_f^2 ~~\mbox{and}~~\mathbb{E}\left\| {\nabla {F_k}\left( {{\xi}} \right) - \nabla {F_k}} \right\|_2^2 \le m{\delta ^2}.
\end{align}


Recalling subsection \ref{sec:moo}, the goal of MOO is to identify a comment descent direction $d_k$, defined by (\ref{eq:dk_optimal}). Thus we can obtain an upper bound followed by
\begin{equation}\label{eq:dk_Pound}
{\left\| {{d_k}} \right\|_2} = {\left\| {\nabla F\left( {{x_k}} \right){\lambda _k}} \right\|_2} \le {\left\| {\sum\limits_{i = 1}^m {{\lambda _{k,i}}} \mathbb{E}\left[ {\nabla {f_i}\left( {{x_k};\xi } \right)} \right]} \right\|_2} \le \mathbb{E}\left[\sum\limits_{i = 1}^m {{\lambda _{k,i}}} {\left\| {\nabla {f_i}\left( {{x_k},\xi } \right)} \right\|_2}\right] \le C_f.
\end{equation}
where ${{\lambda _k} = \left( {{\lambda _{k,1}}, \cdots ,{\lambda _{k,m}}} \right)^T}$, ${{{\lambda _{k,i}} \in \mathbb{R}}}$ denotes the $i$-th element of $\lambda_k$. At this point, problem (\ref{eq:dk_1}) can be reformulated without loss of optimality as:
\begin{align}\label{eq:dk_c}
\mathop {\arg \min }\limits_{d \in C} \mathop {\max }\limits_{\lambda  \in {\Delta ^m}} \mathbb{E}\left[ {\phi \left( {d,\lambda ;\xi } \right)} \right],
\end{align}
where ${C = \left\{ {d \in {\mathbb{R}^n}|{{\left\| d \right\|}_2} \le {C_f}} \right\}}$. For any ${\left( {d,\lambda ,\xi } \right) \in C \times {\Delta ^m} \times \Xi }$, the gradients of the subproblem are given by:
\begin{align}\label{eq:grad}
\nabla \phi \left( {d, \lambda ;\xi } \right) = \left[ {\begin{array}{*{20}{c}}
{\nabla {\phi _d}\left( {d, \lambda ;\xi } \right)}\\
{-\nabla {\phi _\lambda }\left( { d,\lambda ;\xi } \right)}
\end{array}} \right] = \left[ {\begin{array}{*{20}{c}}
{\nabla {F_k}\left( \xi  \right) \lambda  +  d}\\
{ - \nabla {F_k}\left( \xi  \right) d}
\end{array}} \right],
\end{align}
under Assumption \ref{ass_gradient}, it follows that the gradient of the subproblem is unbiased, i.e., ${\mathbb{E}\left[ {\nabla \phi \left( { d, \lambda ;\xi } \right)} \right] = \nabla \phi \left( {d,\lambda } \right)}$.

In fact, the Bregman distance generating function plays a crucial role in the SMD method. The SMOO subproblem can be viewed as a saddle point problem that minimizes $d$ over a bounded set $C$ defined by Euclidean norms, while maximizes ${\lambda}$ over a standard simplex ${{\Delta ^m}}$. As suggested by \cite{nemirovski2009robust}, choosing the $\ell_1$-norm setup for $\lambda$ is considered valid. This setup not only expedites the convergence rate but also ensures the existence of an analytical solution for the approximate mapping of ${\lambda}$.
The generating function for the $\ell_1$-setup is given by:
\begin{align}\label{eq:wlamda}
{\omega _\lambda }\left( x \right) = \sum\limits_{i = 1}^m {{x_i}} \ln {x_i}:{\Delta ^m} \to \mathbb{R},
\end{align}
which corresponds to the modulus $\eta_\lambda = 1$.
The associated mirror descent step is straightforward to implement, with the Bregman distance function defined as:
\[{V_\lambda }\left( {x,y} \right) = \sum\limits_{i = 1}^n {{y_i}\ln \frac{{{y_i}}}{{{x_i}}}} .\]

For $d$, we choose the Euclidean norm as the generating function, given by:
\begin{align}\label{eq:wd}
{\omega _d}\left( x \right) = \frac{1}{2}\left\| x \right\|_2^2:C \to \mathbb{R},
\end{align}
with modulus $\eta_d = 1 $ with respect to $\|\cdot\|_2$.  Using this setup, the Bregman distance for ${{\omega _d}}$ can be expressed as:
\[{V_d}\left( {x,y} \right) = \frac{1}{2}\left\| {x - y} \right\|_2^2.\]
This setup allows us to implement ${{X^o} = X}$ and ${{Y^o} = Y}$ easily.
According to Definition (\ref{eq:D}), the constants of $d$ and $\lambda$ can be expressed as ${{D_{{\omega _d},C}} = {C_f}/\sqrt 2 }$ and ${D_{{\omega _\lambda },{\Delta ^m}}} = \sqrt {\ln m}$.
To establish a more rigorous framework, we equip $\mathbb{R}^n \times \mathbb{R}^m$ with the norm:
\begin{align}\label{eq:norm}
\left\| {\left( {d,\lambda } \right)} \right\| \le {\left( {\frac{{\left\| d \right\|_2^2}}{{2R_d^2}} + \frac{{\left\| \lambda  \right\|_1^2}}{{2R_\lambda ^2}}} \right)^{1/2}},
\end{align}
such that the dual norm can be defined as:
\begin{align}\label{eq:dual_norm}
{\left\| {\left( {d,\lambda } \right)} \right\|_*} \le {\left( {2R_d^2\left\| d \right\|_2^2 + 2R_\lambda ^2\left\| \lambda  \right\|_\infty ^2} \right)^{1/2}},
\end{align}
where
\begin{align*}
R_d^2 = \frac{{D_{{\omega _d},C}^2}}{{{\eta _d}}} = \frac{C_f^2}{2} ~~\mbox{and} ~~R_\lambda ^2 = \frac{{D_{{\omega _\lambda },{\Delta ^m}}^2}}{{{\eta _\lambda }}} = \ln m.
\end{align*}

Given that the initial point for $d$ in the set $C$ is selected as:
\[{d_{k,0}} = \mathop {{\mathop{\rm argmin}\nolimits} }\limits_{d \in C} d = {(0, \ldots ,0)^T},\]
and for $\lambda$ in ${{\Delta ^m}}$ as:
\[{\lambda _{k,0}} = \mathop {{\mathop{\rm argmin}\nolimits} }\limits_{\lambda  \in {\Delta ^m}} \lambda  = {m^{ - 1}}{(1, \ldots ,1)^T}.\]
We can get the iterative format of SMD in solving subproblem (\ref{eq:dk_1}).
At each iterative step $s\in \left[ S \right]$, with stochastic variables ${{\xi _s} \in \Xi }$ and step size $\gamma_s$, the iteration can be expressed as follows:
\begin{align*}
\left\{ {\begin{array}{*{20}{c}}
{{d_{k,s + 1}} = {P_{{d_{k,s}}}}\left( {{\gamma _s}\nabla {\phi _d}\left( {{d_{k,s}},{\lambda _{k,s}};{\xi _s}} \right)} \right)\mathop { = \arg \min }\limits_{d \in C} \left\{ {{\gamma _s}\nabla \phi {{\left( {{d_{k,s}},{\lambda _{k,s}};{\xi _s}} \right)}^T}\left( {d - {d_{k,s}}} \right) + \frac{1}{2}\left\| {d - {d_{k,s}}} \right\|_2^2} \right\},}\\
{{\lambda _{k,s + 1}} = {P_{{\lambda _{k,s}}}}\left( { - {\gamma _s}\nabla {\phi _\lambda }\left( {{d_{k,s}},{\lambda _{k,s}};{\xi _s}} \right)} \right)\mathop { = \arg \min }\limits_{\lambda  \in {\Delta ^m}} \left\{ { - {\gamma _s}\nabla {\phi _\lambda }{{\left( {{d_{k,s}},{\lambda _{k,s}};{\xi _s}} \right)}^T}\left( {\lambda  - {\lambda _{k,s}}} \right) + \sum\limits_{i = 1}^n {{\lambda _i}\ln \frac{{{\lambda _i}}}{{{\lambda _{k,s,i}}}}} } \right\},}
\end{array}} \right.
\end{align*}
where ${{\lambda _{k,s}} = \left( {{\lambda _{k,s,1}}, \cdots ,{\lambda _{k,s,m}}} \right)^T}$, ${{{\lambda _{k,s,i}} \in \mathbb{R}}}$ represents the $i$-th element of $\lambda_{k,s}$.
At this point, the proximal mapping for $d$ reduces to a projection onto the bounded set $C$, while the computation for $\lambda$ can be performed in ${{\rm O}(m)}$ operations based on the explicit formula, as shown below:
\begin{align*}
\left\{ {\begin{array}{*{20}{c}}
{{d_{k,s + 1}} = {\Pi _C}\left( {{d_{k,s}} - {\gamma _s}\nabla {\phi _d}\left( {{d_{k,s}},{\lambda _{k,s}}} \right)} \right),}\\
{{\lambda _{k,s + 1,i}} = \frac{{{\lambda _{k,s,i}}{e^{{{\left[ {{\gamma _s}\nabla {\phi _\lambda }\left( {{d_{k,s}},{\lambda _{k,s}}} \right)} \right]}_i}}}}}{{\sum\limits_{j = 1}^m {{\lambda _{k,s,j}}{e^{{{\left[ {{\gamma _s}\nabla {\phi _\lambda }\left( {{d_{k,s}},{\lambda _{k,s}}} \right)} \right]}_j}}}} }},i \in \left[ m \right].}
\end{array}} \right.
\end{align*}
Substituting the gradient (\ref{eq:grad}) into the iterative format above yields
\begin{align}\label{eq:dks}
{d_{k,s + 1}} = {\Pi _C}\left( {{d_{k,s}} - {\gamma_s}\left( {\nabla {F_k}\left( {{\xi _s}} \right){\lambda _{k,s}} + {d_{k,s}}} \right)} \right),
\end{align}
and
\begin{align}\label{eq:lamdaks}
{\lambda _{k,s + 1,i}} = \frac{{{\lambda _{k,s,i}}{e^{{\gamma _s}\nabla {f_i}\left( {{x_k};{\xi _s}} \right){d_{k,s}}}}}}{{\sum\limits_{j = 1}^m {{\lambda _{k,s,j}}{e^{{\gamma _s}\nabla {f_j}\left( {{x_k};{\xi _s}} \right){d_{k,s}}}}} }},i \in \left[ m \right].
\end{align}
Thus, the iterative updates for ${d _{k,s + 1}}$ and ${\lambda _{k,s + 1}}$ in our MSMD are derived.  Notably, the constraint set $C$, being a simple bounded region under the Euclidean norm, allows for easy computation of its projection. The more challenging simplex constraint can also be solved analytically using the update formula in (\ref{eq:lamdaks}). As a result, each iteration of the proposed MSMD method is computationally efficient, avoiding significant resource overhead, ensuring both convergence and simplicity in implementation.

Similar to the discourse in subsection \ref{sec:smd}, we define ${z = \left( {d,\lambda } \right)}$ in the set ${Z = C \times {\Delta ^m}}$, where $C$ is the bounded set for $d$, and ${{\Delta ^m}}$ is the simplex for $\lambda$. We can further define the Bergman function ${{\omega _z}\left( z \right)}$ on ${Z}$, the Bergman distance ${V\left( {z,u} \right):{Z^o} \times Z \to \mathbb{R}}$ and the prox-mapping ${{P_z}\left( \zeta  \right):{\mathbb{R}^{n + m}} \to {Z^o}}$, where ${{Z^o} = Z}$. At this point we have the modulus $\eta_z = 1$.
The gradient of $\nabla {\phi _z}\left( {{z_{k,s}};{\xi _s}} \right)$ can be expressed as:
\[\nabla {\phi _z}\left( {{z_{k,s}};{\xi _s}} \right) = \left[ {\begin{array}{*{20}{c}}
{\nabla {\phi _d}\left( {{d_{k,s}},{\lambda _{k,s}};{\xi _s}} \right)}\\
-{\nabla {\phi _\lambda }\left( {{d_{k,s}},{\lambda _{k,s}};{\xi _s}} \right)}
\end{array}} \right].\]
Thus, the update of variable $z$ can be written as:
\begin{align}\label{eq:zks}
{z_{k,s + 1}} = {P_{{z_{k,s}}}}\left( {{\gamma_s}\nabla {\phi _z}\left( {{z_{k,s}};{\xi _s}} \right)} \right) = \left[ {\begin{array}{*{20}{c}}
{{d_{k,s + 1}}}\\
{{\lambda _{k,s + 1}}}
\end{array}} \right].
\end{align}
This update formula provides a unified way of handling both the primal variable $d$ and the dual variable $\lambda$.
A weighted average of the iterations from $s=K$ to $s=S$ is utilized as the final iteration, formulated as follows:
\begin{align}\label{eq:avg_zks}
{ z_{k,P}^S} = \frac{{\sum\limits_{s = P}^{S} {{\gamma_s}} {z_{k,s}}}}{{\sum\limits_{s = P}^{S} {{\gamma_s}} }},
\end{align}
where ${ z_{k,P}^S = {\left[ {\begin{array}{*{20}{c}}
{{{\left( { d_{k,P}^S} \right)}^T}}&{{{\left( { \lambda _{k,P}^S} \right)}^T}}
\end{array}} \right]^T}}$
represents the moving average of ${{d_{k,s}}}$ and ${{\lambda_{k,s}}}$. Building on this, we have the iterative formulation of our method
\begin{align}\label{eq:update_xk}
{x_{k + 1}} = {x_k} + {\alpha _k}{{ d}_{k,P}^S},
\end{align} where ${\alpha _k}$ is the step size. Consequently, we summarize our MSMD in Algorithm \ref{alg:smoo}.
\begin{algorithm}[h]
\begin{normalsize}
\caption{\text {Multi-gradient Stochastic Mirror Descent}}
\begin{algorithmic} \label{alg:smoo}
\REQUIRE  bounded set ${C}$, ratio ${r \in \left( {0,1} \right)}$, number of iteration ${K}$, initial point ${{x_0} \in {\mathbb{R}^n}}$, initial update direction ${{d_{0,0}} \in C}$, initial weighting parameter ${{\lambda _{0,0}} \in {\Delta ^m}}$
\FOR{${k = 0, \cdots ,K-1}$}
\STATE Compute ${\alpha_k}$, $S$
\STATE ${P = \left\lceil {rS} \right\rceil }$
\FOR{${s = 0, \cdots ,S-1}$}
\STATE Compute ${\gamma_s}$
\STATE Sample data ${{{\xi _s}}}$
\STATE Update  ${{z_{k,s + 1}}}$ by (\ref{eq:zks})
\ENDFOR
\STATE Compute approximate solution ${ z_{k,P}^S}$ by (\ref{eq:avg_zks})
\STATE Update ${{x_{k + 1}} = {x_k} + {\alpha _k}{{ d}_{k,P}^S}}$
\ENDFOR
\ENSURE $x_K$
\end{algorithmic}
\end{normalsize}
\end{algorithm}

\section{Convergences}
\label{sec:convergences}
In this section, we present a comprehensive theoretical proof for our proposed MSMD method. We begin with the following lemma, which bounds the dual norms of the gradient ${{\nabla {\phi _z}\left( {{z_{k,s}};{\xi _s}} \right)}}$ for subproblem (\ref{eq:dk_1}), as well as the difference between the stochastic gradient and the full gradient ${{\nabla {\phi _z}\left( {{z_{k,s}};{\xi _s}} \right) - \nabla {\phi _z}\left( {{z_{k,s}}} \right)}}$.
\begin{lemma}
\label{lemma:bound}
Suppose Assumption \ref{ass_gradient} holds. For the norm ${\left\|  \cdot  \right\|}$ and dual norm ${{\left\|  \cdot  \right\|_*}}$ defined by (\ref{eq:norm}) and (\ref{eq:dual_norm}), we have the expectation which satisfy
\begin{align}\label{eq:bound_gra_phi}
\mathbb{E}\left\| {\nabla {\phi _z}\left( {{z_{k,s}};{\xi _s}} \right)} \right\|_*^2 \le \left( {2 + \ln m} \right)2mC_f^4,
\end{align}
and
\begin{align}\label{eq:bound_var_phi}
\mathbb{E}\left\| {\nabla {\phi _z}\left( {{z_{k,s}};{\xi _s}} \right) - \nabla {\phi _z}\left( {{z_{k,s}}} \right)} \right\|_*^2 \le \left( {1 + 2m\ln m} \right)C_f^2{\delta ^2}.
\end{align}
\end{lemma}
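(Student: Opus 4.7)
The plan is to apply the definition of the dual norm in (\ref{eq:dual_norm}) to split $\|\nabla \phi_z(z_{k,s};\xi_s)\|_*^2$ into a $d$-block and a $\lambda$-block, and then combine Assumption~\ref{ass_gradient} with the explicit form of $\nabla \phi_z$ in (\ref{eq:grad}) to bound each block in expectation. With $R_d^2 = C_f^2/2$ and $R_\lambda^2 = \ln m$, this decomposition gives
\[
\|\nabla \phi_z(z_{k,s};\xi_s)\|_*^2 \le C_f^2\,\|\nabla F_k(\xi_s)\lambda_{k,s} + d_{k,s}\|_2^2 + 2\ln m \cdot \|\nabla F_k(\xi_s) d_{k,s}\|_\infty^2,
\]
so that both estimates reduce to bounding two matrix--vector products on the right.

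For (\ref{eq:bound_gra_phi}), I would apply $\|a+b\|_2^2 \le 2\|a\|_2^2 + 2\|b\|_2^2$ on the $d$-block, use $d_{k,s} \in C$ to obtain $\|d_{k,s}\|_2^2 \le C_f^2$, and apply Jensen's inequality on the simplex to get $\|\nabla F_k(\xi_s)\lambda_{k,s}\|_2^2 \le \sum_i \lambda_{k,s,i}\|\nabla f_i(x_k;\xi_s)\|_2^2$, whose expectation is at most $C_f^2$ by Assumption~\ref{ass_gradient}. For the $\lambda$-block I would estimate $\|\nabla F_k(\xi_s)d_{k,s}\|_\infty^2 = \max_i (\nabla f_i(x_k;\xi_s)^T d_{k,s})^2 \le \|d_{k,s}\|_2^2 \sum_i \|\nabla f_i(x_k;\xi_s)\|_2^2$ via Cauchy--Schwarz, whose expectation is at most $mC_f^4$ by the componentwise uniform bound. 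Summing the two contributions and absorbing the constant via $4 \le 4m$ yields the target $2m(2+\ln m)C_f^4$.

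The proof of (\ref{eq:bound_var_phi}) is structurally identical, with $\nabla F_k(\xi_s)$ replaced throughout by $\nabla F_k(\xi_s) - \nabla F_k$. The same Jensen step on the $d$-block produces $\sum_i \lambda_{k,s,i}\|\nabla f_i(x_k;\xi_s) - \nabla f_i(x_k)\|_2^2$, whose expectation is bounded by $\delta^2$ thanks to (\ref{eq:bound_var_f}); this contributes exactly $C_f^2\delta^2$. On the $\lambda$-block the same Cauchy--Schwarz argument together with (\ref{eq:bound_var_f}) gives an expected contribution of $2\ln m \cdot mC_f^2\delta^2$. The two contributions add precisely to $(1 + 2m\ln m)C_f^2\delta^2$, matching the claim with no slack.

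The argument is essentially careful bookkeeping; the only delicate point is placing Jensen's inequality on the simplex at the $d$-block so that the convex combination $\sum_i \lambda_{k,s,i}$ prevents an extraneous factor of $m$, while the $\ell_\infty$-norm appearing in the dual forces the $\lambda$-block to pay the coordinate sum $\sum_i \|\cdot\|_2^2$, which is what introduces the $m\ln m$ dependence in the final constants. No nontrivial obstacle beyond this tracking of factors is expected.
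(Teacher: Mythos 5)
Your proposal is correct and follows essentially the same route as the paper's proof: the same dual-norm decomposition with weights $2R_d^2 = C_f^2$ and $2R_\lambda^2 = 2\ln m$, Jensen's inequality on the simplex for the $d$-block, the Cauchy--Schwarz/Frobenius-type bound giving $mC_f^4$ (resp.\ $mC_f^2\delta^2$) for the $\lambda$-block, and the same absorption of $4 \le 4m$ to reach $(2+\ln m)2mC_f^4$. The variance estimate, including the cancellation of $d_{k,s}$ in the $d$-block yielding exactly $(1+2m\ln m)C_f^2\delta^2$, also matches the paper's argument.
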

\begin{proof}
Setting ${\lambda  = {\lambda _{k,s}}}$ and ${d = {d_{k,s}}}$ in (\ref{eq:grad}), with Assumption \ref{ass_gradient} and the fact that ${\lambda  \in {\Delta ^m}}$, we can get
\begin{align*}
\mathbb{E}\left\| {\nabla {F_k}\left( \xi  \right){\lambda _{k,s}} + {d_{k,s}}} \right\|_2^2 &\le 2\mathbb{E}\left\| {\nabla {F_k}\left( \xi_s  \right){\lambda _{k,s}}} \right\|_2^2 + 2\mathbb{E}\left\| {{d_{k,s}}} \right\|_2^2\\
& \le 2\mathbb{E}\left\| {\sum\limits_{i = 1}^m {{\lambda _{k,s,i}}} \nabla {f_i}\left( {{x_k};\xi_s } \right)} \right\|_2^2 + 2\mathbb{E}\left\| {{d_{k,s}}} \right\|_2^2\\
& \le 2\mathbb{E}\sum\limits_{i = 1}^m {{\lambda _{k,s,i}}} \left\| {\nabla {f_i}\left( {{x_k};\xi_s } \right)} \right\|_2^2 + 2\mathbb{E}\left\| {{d_{k,s}}} \right\|_2^2\\
& \le 2C_f^2 + 2C_f^2\\
& = 4C_f^2,
\end{align*}
where the first inequality follow from the fact that ${\left\| {a + b} \right\|_2^2 \le 2\left\| a \right\|_2^2 + 2\left\| b \right\|_2^2}$, and the last inequality is by the bound in (\ref{eq:dk_Pound}). For ${\mathbb{E}\left\| {\nabla {\phi _\lambda }\left( {{\lambda _{k,s}},{d_{k,s}};\xi } \right)} \right\|_2^2}$ we have
\begin{align*}
\mathbb{E}\left\| {\nabla {F_k}\left( \xi_s  \right){d_{k,s}}} \right\|_\infty ^2 &\le \mathbb{E}\left\| {\nabla {F_k}\left( \xi_s  \right){d_{k,s}}} \right\|_2^2\\
& \le\mathbb{E} \left\| {\nabla {F_k}\left( \xi_s  \right)} \right\|_2^2\left\| {{d_{k,s}}} \right\|_2^2\\
& \le mC_f^2C_f^2\\
& \le {m}C_f^4.
\end{align*}
Based on the two inequalities above we can conclude that the dual norm of the stochastic gradient ${{\nabla {\phi _z}\left( {{z_{k,s}};{\xi _s}} \right)}}$ satisfies
\begin{align*}
\mathbb{E}\left\| {\nabla {\phi _z}\left( {{z_{k,s}};{\xi _s}} \right)} \right\|_*^2 &\le \mathbb{E}\left\| {\left[ {\begin{array}{*{20}{c}}
{\nabla {\phi _d}\left( {{z_{k,s}};{\xi _s}} \right)}\\
{-\nabla {\phi _\lambda }\left( {{z_{k,s}};{\xi _s}} \right)}
\end{array}} \right]} \right\|_*^2 \le \mathbb{E}\left\| {\left[ {\begin{array}{*{20}{c}}
{\nabla {F_k}\left( \xi_s  \right){\lambda _{k,s}} + {d_{k,s}}}\\
{ - \nabla {F_k}\left( \xi_s  \right){d_{k,s}}}
\end{array}} \right]} \right\|_*^2\\
& \le C_f^2\mathbb{E}\left\| {\nabla {F_k}\left( \xi_s  \right){\lambda _{k,s}} + {d_{k,s}}} \right\|_2^2 + 2\ln m\mathbb{E}\left\| {\nabla {F_k}\left( \xi_s  \right){d_{k,s}}} \right\|_\infty ^2\\
& \le C_f^24mC_f^2 + 2\ln m{m}C_f^4\\
& = \left( {2 + \ln m} \right)2{m}C_f^4,
\end{align*}
where the penultimate inequality is according to the dual norm on $Z$ defined in (\ref{eq:dual_norm}). Similarly, for the variance of ${{\nabla {\phi _z}\left( {{z_{k,s}};{\xi _s}} \right)}}$ under the same dual norm definition, we have
\begin{align*}
\mathbb{E}\left\| {\nabla {\phi _z}\left( {{z_{k,s}};{\xi _s}} \right) - \nabla {\phi _z}\left( {{z_{k,s}}} \right)} \right\|_*^2 &= \mathbb{E}\left\| {\left[ {\begin{array}{*{20}{c}}
{\nabla {\phi _d}\left( {{z_{k,s}};{\xi _s}} \right) - \nabla {\phi _d}\left( {{z_{k,s}}} \right)}\\
{ - \nabla {\phi _\lambda }\left( {{z_{k,s}};{\xi _s}} \right) + \nabla {\phi _\lambda }\left( {{z_{k,s}}} \right)}
\end{array}} \right]} \right\|_*^2\\
 &\le C_f^2\mathbb{E}\left\| {\nabla {\phi _d}\left( {{z_{k,s}};{\xi _s}} \right) - \nabla {\phi _d}\left( {{z_{k,s}}} \right)} \right\|_2^2 + 2\ln m\mathbb{E}\left\| {\nabla {\phi _\lambda }\left( {{z_{k,s}};{\xi _s}} \right) - \nabla {\phi _\lambda }\left( {{z_{k,s}}} \right)} \right\|_\infty ^2\\
& \le C_f^2\mathbb{E}\left\| {\nabla {F_k}\left( {{\xi _s}} \right){\lambda _{k,s}} + {d_{k,s}} - \nabla {F_k}{\lambda _{k,s}} - {d_{k,s}}} \right\|_2^2 + 2\ln m\mathbb{E}\left\| { - \nabla {F_k}\left( {{\xi _s}} \right){d_{k,s}} + \nabla {F_k}{d_{k,s}}} \right\|_\infty ^2\\
& = C_f^2\mathbb{E}\left\| {\left( {\nabla {F_k}\left( {{\xi _s}} \right) - \nabla {F_k}} \right){\lambda _{k,s}}} \right\|_2^2 + 2\ln m\mathbb{E}\left\| {\left( {\nabla {F_k}\left( {{\xi _s}} \right) - \nabla {F_k}} \right){d_{k,s}}} \right\|_\infty ^2\\
& \le C_f^2\sum\limits_{i = 1}^m {{\lambda _{k,s,i}}} \mathbb{E}\left\| {\nabla {f_i}\left( {{x_k};{\xi _s}} \right) - \nabla {f_i}\left( {{x_k}} \right)} \right\|_2^2 + 2\ln m\mathbb{E}\left\| {\left( {\nabla {F_k}\left( {{\xi _s}} \right) - \nabla {F_k}} \right){d_{k,s}}} \right\|_\infty ^2\\
& \le C_f^2{\delta ^2} + 2\ln m\left\| {{d_{k,s}}} \right\|_2^2\mathbb{E}\left\| {\nabla {F_k}\left( {{\xi _s}} \right) - \nabla {F_k}} \right\|_2^2\\
& \le C_f^2{\delta ^2} + 2\ln mC_f^2m{\delta ^2}\\
& = \left( {1 + 2m\ln m} \right)C_f^2{\delta ^2},
\end{align*}
where the last of these inequalities is due to the variance of the gradient matrix in (\ref{eq:bound_marx_F}) and the upper bound of $d_{k,s}$ in (\ref{eq:dk_Pound}), thereby completing the proof.
\end{proof}

Lemma \ref{lemma:bound} demonstrates that the gradient norm and variance of the subproblem in SMOO are bounded above by two constants. This upper bound is influenced by $C_f$ and $\delta$ outlined in Assumption \ref{ass_gradient} and increases proportionally with the number of objectives $m$. Before proving the main result of our work, it is necessary to introduce the following lemma.

\begin{lemma} (\cite{nemirovski2009robust} Lemma 6.1)
\label{lemma:2}
Let ${{\zeta _1}, \cdots ,{\zeta _S}}$ be a sequence of elements of ${{\mathbb{R}^{n + m}}}$. Define the sequence ${{v_s}}$, ${s \in \left[ S \right]}$ in ${{Z^o}}$ as follows: ${{v_1} \in {Z^o}}$ and
\[{v_{s + 1}} = {P_{{v_s}}}\left( {{\zeta _s}} \right),1 \le s \le S.\]
Then, for any ${u \in Z}$, the following holds:
\begin{align}\label{eq:lemma2}
\sum\limits_{s = 1}^S {\zeta _s^T\left( {{v_s} - u} \right)}  \le V\left( {{v_1},u} \right) + \frac{1}{{2\eta }}\sum\limits_{s = 1}^S {\left\| {{\zeta _s}} \right\|_*^2}.
\end{align}
\end{lemma}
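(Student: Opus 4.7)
The plan is to prove this telescoping inequality by iterating the one-step prox estimate from Lemma \ref{lemma:1} and summing, which is the standard route for mirror-descent-style analyses. First, I would apply Lemma \ref{lemma:1} at each step $s$ with the substitution $x \leftarrow v_s$, $y \leftarrow \zeta_s$, so that $P_x(y) = P_{v_s}(\zeta_s) = v_{s+1}$. This immediately yields
\begin{equation*}
V(v_{s+1}, u) \le V(v_s, u) + \zeta_s^T(u - v_s) + \frac{\|\zeta_s\|_*^2}{2\eta},
\end{equation*}
for every $u \in Z$ and every $s \in [S]$.

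Next, I would rearrange this pointwise bound into the form
\begin{equation*}
\zeta_s^T(v_s - u) \le V(v_s, u) - V(v_{s+1}, u) + \frac{\|\zeta_s\|_*^2}{2\eta},
\end{equation*}
so that the right-hand side is a telescoping difference in $V(\cdot, u)$ plus a controllable squared-norm residual. Summing over $s = 1, \ldots, S$ collapses the Bregman distances to $V(v_1, u) - V(v_{S+1}, u)$, and then I would use the fact that the Bregman distance is nonnegative (since $\omega$ is strongly convex with modulus $\eta$, so $V(\cdot, \cdot) \ge 0$) to drop the $-V(v_{S+1}, u)$ term. This gives exactly the claimed bound (\ref{eq:lemma2}).

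There is no real obstacle here: the entire argument is a mechanical telescoping once Lemma \ref{lemma:1} is in hand. The only points worth checking carefully are that $v_{s+1} \in Z^o$ for all $s$ (so that Lemma \ref{lemma:1} can be reapplied at the next step), which follows from the fact that the prox-mapping $P_x(\cdot)$ takes values in $Z^o$ as noted immediately after definition (\ref{eq:pxy}), and that $V(v_{S+1}, u) \ge 0$, which is a direct consequence of the $\eta$-strong convexity of $\omega$ used to define $V$ in (\ref{eq:V}). Since nothing in the argument uses any structure of the $\zeta_s$ beyond their being elements of $\mathbb{R}^{n+m}$, the result holds for arbitrary (possibly stochastic) sequences, which is precisely what is needed when $\zeta_s$ is later instantiated as the stochastic gradient $\gamma_s \nabla \phi_z(z_{k,s}; \xi_s)$ in the convergence analysis of MSMD.
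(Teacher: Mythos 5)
Your proof is correct: the paper itself states this lemma only by citation to \cite{nemirovski2009robust} (Lemma 6.1), and your argument is precisely the standard one used there --- apply the one-step prox bound of Lemma \ref{lemma:1} with $x = v_s$, $y = \zeta_s$, rearrange into a telescoping difference of Bregman distances plus the $\frac{1}{2\eta}\left\| \zeta_s \right\|_*^2$ residual, sum over $s$, and drop the nonnegative term $V\left( v_{S+1}, u \right)$. Your side remarks (that $v_{s+1} \in Z^o$ so the one-step bound can be reapplied, and that no structure of the $\zeta_s$ beyond membership in $\mathbb{R}^{n+m}$ is used, which is what later permits the stochastic instantiation) are also the right points to check, so nothing is missing.
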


To establish a precise upper bound for the Bergman distance, we utilize the continuous differentiability of the Bergman function ${{\omega _z}\left( z \right)}$, We define the bound as follows:

\begin{equation}\label{eq:barD}
\begin{aligned}
{{\bar D}_{{\omega _z},Z}}&: = \sqrt 2 \mathop {\sup }\limits_{u \in Z,v \in Z} {\left[ {\omega \left( v \right) - \omega \left( u \right) - {{\left( {v - u} \right)}^T}\nabla \omega \left( u \right)} \right]^{1/2}}\\
 &= \mathop {\sup }\limits_{u \in Z,v \in Z} \sqrt {2V\left( {u, v} \right)}.
\end{aligned}
\end{equation}
This leads to the conclusion that for any ${u,v \in Z}$,
\begin{align*}
V\left( {u,v} \right) \le \frac{1}{2}\bar D_{{\omega _z},Z}^2 <  + \infty.
\end{align*}
This inequality implies that the Bergman distance ${V\left( {u,v} \right)}$ is bounded, which is crucial for the convergence proof of MSMD. By ensuring that the distance between any two points in $Z$ remains finite, we can analyze the stability and convergence behavior of the iterative updates generated by our method.

\begin{theorem}\label{th1}
Suppose that Assumptions \ref{ass_f} and \ref{ass_gradient} hold. Let
\begin{align}\label{eq:m*}
M_*^2 = \left( {2 + \ln m} \right){m^2}C_f^4 + \left( {\frac{1}{2} + \ln m} \right){m^2}C_f^2{\delta ^2},
\end{align}
the iteration ${{{ z}_{k,P}^S} = {\left[ {\begin{array}{*{20}{c}}
{ ( d_{k,P}^{S})^T}&{ ( \lambda _{k,P}^S)^T}
\end{array}} \right]^T}}$ generated from (\ref{eq:avg_zks}) satisfies the following
\begin{align}\label{eq:th_dk-dks}
\frac{1}{2}\mathbb{E}\left\| {{d_k} - {{ d}_{k,P}^S}} \right\|_2^2 \le {\left[ {\sum\limits_{s = P}^{S } {{\gamma_s}} } \right]^{ - 1}}\left[ {\bar D_{{\omega _z},Z}^2 + M_*^2\sum\limits_{s = P}^{S} {\gamma_s^2} } \right].
\end{align}

\end{theorem}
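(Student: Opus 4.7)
The plan is to bound $\tfrac12 \mathbb{E}\|d_k - d_{k,P}^S\|_2^2$ via the duality gap of the saddle point problem, and then to control the duality gap by a truncated version of the standard stochastic mirror descent argument that underlies Lemma \ref{lemma:epslion}. The essential observation is that $\phi(d, \lambda_k) = \langle \nabla F_k \lambda_k, d\rangle + \tfrac12 \|d\|_2^2$ is $1$-strongly convex in $d$, so since $(d_k,\lambda_k)$ is a saddle point on $C \times \Delta^m$, the first-order optimality condition in $d$ gives
\[
\tfrac12 \| d_{k,P}^S - d_k \|_2^2 \;\le\; \phi(d_{k,P}^S, \lambda_k) - \phi(d_k, \lambda_k).
\]
Using $\phi(d_k,\lambda_k) = \min_d \max_\lambda \phi(d,\lambda) = \max_\lambda \min_d \phi(d,\lambda)$, this in turn is dominated by the duality gap $\max_{\lambda \in \Delta^m}\phi(d_{k,P}^S, \lambda) - \min_{d \in C} \phi(d, \lambda_{k,P}^S)$, so it suffices to bound the expectation of this gap.

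Next I would linearize: by convexity of $\phi(\cdot,\lambda)$ in $d$ and concavity of $\phi(d,\cdot)$ in $\lambda$, together with Jensen's inequality applied to the weighted average (\ref{eq:avg_zks}),
\[
\max_{\lambda}\phi(d_{k,P}^S,\lambda) - \min_{d}\phi(d,\lambda_{k,P}^S) \;\le\; \frac{1}{\sum_{s=P}^S \gamma_s}\, \max_{u \in Z}\, \sum_{s=P}^S \gamma_s \bigl\langle \nabla \phi_z(z_{k,s}),\, z_{k,s} - u \bigr\rangle,
\]
where $u = (d,\lambda) \in Z$. Writing $\nabla \phi_z(z_{k,s}) = \nabla \phi_z(z_{k,s};\xi_s) + \Delta_s$ with $\Delta_s := \nabla \phi_z(z_{k,s}) - \nabla \phi_z(z_{k,s};\xi_s)$, the stochastic part is handled directly by Lemma \ref{lemma:2} applied to the sequence $v_s = z_{k,s}$ with $\zeta_s = \gamma_s \nabla \phi_z(z_{k,s};\xi_s)$ (starting from index $P$); this produces $V(z_{k,P},u) + \tfrac12 \sum_{s=P}^S \gamma_s^2 \|\nabla\phi_z(z_{k,s};\xi_s)\|_*^2$, whose first term is at most $\tfrac12 \bar D_{\omega_z,Z}^2$ by (\ref{eq:barD}) and whose second in expectation is at most $\tfrac12 M_*^2 \sum_{s=P}^S \gamma_s^2$ after invoking (\ref{eq:bound_gra_phi}) of Lemma \ref{lemma:bound}.

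The delicate point, and the main obstacle, is the noise term $\sum_{s=P}^S \gamma_s \langle \Delta_s, z_{k,s} - u\rangle$: the supremum over $u$ cannot be exchanged with the expectation, and $z_{k,s}$ is not independent of $\xi_s$. I would follow the Nemirovski device of introducing an auxiliary sequence $w_s$ in $Z^o$, with $w_P = z_{k,P}$ and $w_{s+1} = P_{w_s}(-\gamma_s \Delta_s)$, and decomposing
\[
\sum_{s=P}^S \gamma_s \langle \Delta_s, z_{k,s} - u \rangle = \sum_{s=P}^S \gamma_s \langle \Delta_s, z_{k,s} - w_s \rangle + \sum_{s=P}^S \gamma_s \langle \Delta_s, w_s - u \rangle.
\]
The first sum is a martingale-difference sequence whose expectation vanishes because $w_s$ and $z_{k,s}$ are $\xi_{[s-1]}$-measurable and $\mathbb{E}[\Delta_s \mid \xi_{[s-1]}] = 0$. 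The second sum is bounded by a second application of Lemma \ref{lemma:2} to $\{w_s\}$, contributing another $V(w_P,u) + \tfrac12 \sum_{s=P}^S \gamma_s^2 \|\Delta_s\|_*^2$, and then in expectation yielding $\tfrac12 \bar D_{\omega_z,Z}^2$ plus a variance term controlled by (\ref{eq:bound_var_phi}). Collecting every piece and noting that $M_*^2$ as defined in (\ref{eq:m*}) absorbs both the norm bound of Lemma \ref{lemma:bound} and the variance bound (up to the extra factor $m$ incurred when converting the gap bound back into the distance bound through the factor $C_f$ appearing in (\ref{eq:dual_norm})), we arrive at the inequality (\ref{eq:th_dk-dks}).
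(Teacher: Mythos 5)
Your proposal is correct and takes essentially the same route as the paper's proof: the same reduction via strong convexity and saddle-point optimality to a duality-gap bound, the same linearization by convexity--concavity and Jensen applied to the weighted average, and the same Nemirovski device (Lemma \ref{lemma:2} with an auxiliary prox sequence, the martingale term vanishing in expectation since the iterates are $\xi_{[s-1]}$-measurable, and the norm/variance bounds of Lemma \ref{lemma:bound}). The only cosmetic differences are that the paper evaluates the gap at the fixed point $u=z_k=(d_k,\lambda_k)$ rather than taking a supremum over $u$, and that with your sign convention for $\Delta_s$ the auxiliary update should be $w_{s+1}=P_{w_s}(\gamma_s\Delta_s)$ so that Lemma \ref{lemma:2} bounds $\sum_{s}\gamma_s\langle\Delta_s,w_s-u\rangle$ rather than its negative.
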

\begin{proof}
Considering that ${\phi \left( {d,\lambda_k } \right)}$ is strongly convex with respect to $d$ and ${{d_k}}$ is the optimal solution of Problem (\ref{eq:dk}), according to the first-order optimality condition for any ${d \in C}$ we have
\begin{align}\label{eq:d_dk>=0}
\nabla {\phi _d}\left( {{d_k},{\lambda _k}} \right)^T\left( {d - {d_k}} \right) \ge 0.
\end{align}
Recalling the generation of ${{{{ \lambda }_{k,P}^S}}}$ and ${{{{ d}_{k,P}^S}}}$ in (\ref{eq:avg_zks}) , we can get
\begin{equation}
\begin{aligned}\label{eq:th1_7}
\frac{1}{2}\left\| {{d_k} -  d_{k,P}^S} \right\|_2^2 &\le \frac{1}{2}\left\| {{d_k} -  d_{k,P}^S} \right\|_2^2 + \nabla {\phi _d}\left( {{d_k},{\lambda _k}} \right)^T\left( { d_{k,P}^S - {d_k}} \right)\\
& \le \phi \left( { d_{k,P}^S,{\lambda _k}} \right) - \phi \left( {{d_k},{\lambda _k}} \right)\\
& = \phi \left( { d_{k,P}^S,{\lambda _k}} \right) - \mathop {\max }\limits_{\lambda  \in {\Delta ^m}} \phi \left( {{d_k},\lambda } \right)\\
& \le \phi \left( { d_{k,P}^S,{\lambda _k}} \right) - \phi \left( {{d_k}, \lambda _{k,P}^S} \right)\\
& \le {\left[ {\sum\limits_{s = P}^S {{\gamma _s}} } \right]^{ - 1}}\left[ {\sum\limits_{s = P}^S {{\gamma _s}\phi \left( {{d_{k,s}},{\lambda _k}} \right) - \sum\limits_{s = P}^S {{\gamma _s}\phi \left( {{d_k},{\lambda _{k,s}}} \right)} } } \right],
\end{aligned}
\end{equation}
where the first inequality follows from (\ref{eq:d_dk>=0}) and the fact that ${ d_{k,P}^S \in C}$, the last one uses the convexity-concavity of function ${\phi}$. The first equality is directly obtained from the definition of $\lambda_k$ in (\ref{eq:moo_optimal}).
Furthermore, by the convexity of ${\phi \left( { \cdot ,{\lambda _k}} \right)}$ and concavity of ${\phi \left( {{d_k}, \cdot } \right)}$, we can derive the following inequalities:
\[{\phi \left( {{d_{k,s}},{\lambda _{k}}} \right) - \phi \left( {{d_k},{\lambda _k}} \right) \le {{\left( {{d_{k,s}} - {d_k}} \right)}^T}\nabla {\phi _d}\left( {{d_{k,s}},{\lambda _{k,s}}} \right)},\]
and a similar inequality concerning ${\lambda_k}$:
\[{\phi \left( {{d_{k}},{\lambda _k}} \right) - \phi \left( {{d_{k}},{\lambda _{k,s}}} \right) \le {{\left( {{\lambda _k} - {\lambda _{k,s}}} \right)}^T}\nabla {\phi _\lambda }\left( {{d_{k,s}},{\lambda _{k,s}}} \right)}.\]
Adding these two inequalities together provides
\begin{equation}
\begin{aligned}\label{eq:th1_5}
\phi \left( {{d_{k,s}},{\lambda _k}} \right) - \phi \left( {{d_k},{\lambda _{k,s}}} \right) &\le {\left( {{d_{k,s}} - {d_k}} \right)^T}\nabla {\phi _d}\left( {{d_{k,s}},{\lambda _{k,s}}} \right) + {\left( {{\lambda _k} - {\lambda _{k,s}}} \right)^T}\nabla {\phi _\lambda }\left( {{d_{k,s}},{\lambda _{k,s}}} \right)\\
& = {\left( {{z_{k,s}} - {z_k}} \right)^T}\nabla {\phi _z}\left( {{z_{k,s}}} \right).
\end{aligned}
\end{equation}
Applying Lemma \ref{lemma:1} with ${x = {z_{k,s}}}$, ${y = \gamma_s \nabla {\phi _z}\left( {{z_{k,s}};{\xi _s}} \right)}$, $u=z_k$ and ${{\eta_z  = 1}}$, recalling that ${{z_{k,s + 1}} = {P_{{z_{k,s}}}}\left( {{\gamma_s}\nabla {\phi _z}\left( {{z_{k,s}};{\xi _s}} \right)} \right)}$
we drive the inequality:
\[{\gamma_s}{\left( {{z_{k,s}} - {z_k}} \right)^T}\nabla {\phi _z}\left( {{z_{k,s}};{\xi _s}} \right) \le V\left( {{z_{k,s},z_k}} \right) - V\left( {{z_{k,s + 1}},z_k} \right) + \frac{{\gamma_s^2}}{2}\left\| {\nabla {\phi _z}\left( {{z_{k,s}};{\xi _s}} \right)} \right\|_*^2.\]
Subtracting ${{\gamma_s}{\left( {{z_{k,s}} - {z_k}} \right)^T}\left( {\nabla {\phi _z}\left( {{z_{k,s}};{\xi _s}} \right) - \nabla {\phi _z}\left( {{z_{k,s}}} \right)} \right)}$ on both sides yields
\begin{equation}
\begin{aligned} \label{eq:th_1_3}
{\gamma_s}{\left( {{z_{k,s}} - {z_k}} \right)^T}\nabla {\phi _z}\left( {{z_{k,s}}} \right) &\le V\left( {{z_{k,s},z_k}} \right) - V\left( {{z_{k,s + 1},z_k}} \right) + \frac{{\gamma_s^2}}{2}\left\| {\nabla {\phi _z}\left( {{z_{k,s}};{\xi _s}} \right)} \right\|_*^2\\
& - {\gamma_s}{\left( {{z_{k,s}} - {z_k}} \right)^T}\left( {\nabla {\phi _z}\left( {{z_{k,s}};{\xi _s}} \right) - \nabla {\phi _z}\left( {{z_{k,s}}} \right)} \right).
\end{aligned}
\end{equation}
Then, we apply Lemma \ref{lemma:2} with initial point ${{v_P} = {z_{k,P}}}$ and ${{\zeta _s} =  - {\gamma_s}\left( {\nabla {\phi _z}\left( {{z_{k,s}};{\xi _s}} \right) - \nabla {\phi _z}\left( {{z_{k,s}}} \right)} \right)}$, leading to:
\begin{align}\label{eq:th1_1}
\sum\limits_{s = P}^{S} {{\gamma_s}{{\left( {{z_k} - {v_s}} \right)}^T}\left( {\nabla {\phi _z}\left( {{z_{k,s}};{\xi _s}} \right) - \nabla {\phi _z}\left( {{z_{k,s}}} \right)} \right)}  \le V\left( {{z_{k,P}},{z_k}} \right) + \sum\limits_{s = P}^{S} {\frac{{\gamma_s^2}}{2}\left\| {\nabla {\phi _z}\left( {{z_{k,s}};{\xi _s}} \right) - \nabla {\phi _z}\left( {{z_{k,s}}} \right)} \right\|_*^2}.
\end{align}
Taking the expectation of (\ref{eq:th1_1}) on both side and plug (\ref{eq:bound_var_phi}) into it we have
\begin{equation}
\begin{aligned}\label{eq:th1_2}
\mathbb{E}\left[ {\sum\limits_{s = P}^{S} {{\gamma_s}{{\left( {z_k - {v_s}} \right)}^T}\left( {\nabla {\phi _z}\left( {{z_{k,s}};{\xi _s}} \right) - \nabla {\phi _z}\left( {{z_{k,s}}} \right)} \right)} } \right]  \le {V\left( {{z_{k,P}}, z_k } \right)} + \left( {\frac{1}{2} + m\ln m} \right)C_f^2{\delta ^2}\sum\limits_{s = P}^{S} {\gamma_s^2}.
\end{aligned}
\end{equation}
Summing up (\ref{eq:th_1_3}) for ${s = P, \cdots ,S}$, we can get

\begin{equation}
\begin{aligned}\label{eq:th1_4}
\sum\limits_{s = P}^{S } {{\gamma_s}{{\left( {{z_{k,s}} - z_k} \right)}^T}\nabla {\phi _z}\left( {{z_{k,s}}} \right)}  &\le V\left( {{z_{k,P}},z_k} \right) - V\left( {{z_{k,S+1}},z_k} \right) + \sum\limits_{s = P}^{S} {\frac{{\gamma_s^2}}{2}\left\| {\nabla {\phi _z}\left( {{z_{k,s}};{\xi _s}} \right)} \right\|_*^2} \\
& - \sum\limits_{s = P}^{S } {{\gamma_s}{{\left( {{z_{k,s}} - z_k} \right)}^T}\left( {\nabla {\phi _z}\left( {{z_{k,s}};{\xi _s}} \right) - \nabla {\phi _z}\left( {{z_{k,s}}} \right)} \right)} \\
& \le V\left( {{z_{k,P}},z_k} \right) + \sum\limits_{s = P}^{S } {\frac{{\gamma_s^2}}{2}\left\| {\nabla {\phi _z}\left( {{z_{k,s}};{\xi _s}} \right)} \right\|_*^2}  - \sum\limits_{s = P}^{S } {{\gamma_s}{{\left( {{z_{k,s}} - {v_s}} \right)}^T}\left( {\nabla {\phi _z}\left( {{z_{k,s}};{\xi _s}} \right) - \nabla {\phi _z}\left( {{z_{k,s}}} \right)} \right)} \\
& + \sum\limits_{s =P}^{S } {{\gamma_s}{{\left( {z_k -{v_s}} \right)}^T}\left( {\nabla {\phi _z}\left( {{z_{k,s}};{\xi _s}} \right) - \nabla {\phi _z}\left( {{z_{k,s}}} \right)} \right)},
\end{aligned}
\end{equation}
where the last inequality follows from the fact that ${V\left( {{z_{k,S+1}},{z_k}} \right) \ge 0}$.
For the giving ${{\xi _{[s - 1]}} = \left( {{\xi _1}, \cdots ,{\xi _{s - 1}}} \right)}$ and that the conditional expectation of ${{\nabla {\phi _z}\left( {{z_{k,s}};{\xi _s}} \right) - \nabla {\phi _z}\left( {{z_{k,s}}} \right)}}$, we can conclude that both  $z_{k,s}$ and $v_s$ are deterministic functions. At this point, we conclude that ${\mathbb{E}\left[ {{{\left( {{z_{k,s}} - {v_s}} \right)}^T}\left( {\nabla {\phi _z}\left( {{z_{k,s}};{\xi _s}} \right) - \nabla {\phi _z}\left( {{z_{k,s}}} \right)} \right)} \right] = 0}$. Take expectations on both sides of (\ref{eq:th1_4}), and substituting (\ref{eq:bound_gra_phi}) and (\ref{eq:th1_2}) into it yields

\begin{equation}
\begin{aligned}\label{eq:barDin}
\mathbb{E}\left[ {\sum\limits_{s = P}^{S } {{\gamma_s}{{\left( {{z_{k,s}} - {z_k}} \right)}^T}\nabla {\phi _z}\left( {{z_{k,s}}} \right)} } \right] &\le V\left( {{z_{k,P}},{z_k}} \right) + \sum\limits_{s = P}^{S} {\frac{{\gamma_s^2}}{2}\mathbb{E}\left\| {\nabla {\phi _z}\left( {{z_{k,s}};{\xi _s}} \right)} \right\|_*^2} \\
& + \sum\limits_{s = P}^{S } {{\gamma_s}\mathbb{E}\left[ {{{\left( {{z_k} - {v_s}} \right)}^T}\left( {\nabla {\phi _z}\left( {{z_{k,s}};{\xi _s}} \right) - \nabla {\phi _z}\left( {{z_{k,s}}} \right)} \right)} \right]} \\
 &\le V\left( {{z_{k,P}},{z_k}} \right) + \sum\limits_{s = P}^{S} {\frac{{\gamma_s^2}}{2}\mathbb{E}\left\| {\nabla {\phi _z}\left( {{z_{k,s}};{\xi _s}} \right)} \right\|_*^2}  + V\left( {{z_{k,P}},{z_k}} \right) + \left( {\frac{1}{2} + m\ln m} \right)C_f^2{\delta ^2}\sum\limits_{s = P}^{S } {\gamma_s^2} \\
& \le 2V\left( {{z_{k,P}},{z_k}} \right) + \left( {1 + \ln m} \right)mC_f^4\sum\limits_{s = P}^{S} {\gamma_s^2}  + \left( {\frac{1}{2} + m\ln m} \right)C_f^2{\delta ^2}\sum\limits_{s = P}^{S } {\gamma_s^2},
\end{aligned}
\end{equation}
By plugging the upper bound of Bergman distance (${\ref{eq:barD}}$) into (\ref{eq:barDin}) we can get
\begin{align*}
\mathbb{E}\left[ {\sum\limits_{s = P}^S {{\gamma _s}{{\left( {{z_{k,s}} - {z_k}} \right)}^T}\nabla {\phi _z}\left( {{z_{k,s}}} \right)} } \right] \le \bar D_{{\omega _z},Z}^2 + \underbrace {\left( {\left( {2 + \ln m} \right){m^2}C_f^4 + \left( {\frac{1}{2} + \ln m} \right){m^2}C_f^2{\delta ^2}} \right)}_{ = M_*^2}\sum\limits_{s = P}^S {\gamma _s^2} ,
\end{align*}
where the constant part of the second term on the right-hand side of the inequality is denoted as $M^*$, we can simplify to have
\[\mathbb{E}\left[ {\sum\limits_{s = P}^{S } {{\gamma_s}{{\left( {{z_{k,s}} - {z_k}} \right)}^T}\nabla {\phi _z}\left( {{z_{k,s}}} \right)} } \right] \le \bar D_{{\omega _z},Z}^2 + M_*^2\sum\limits_{s = P}^{S } {\gamma_s^2}. \]
Then implies the aforementioned inequality, summing up (\ref{eq:th1_5}) for $s=P$ to $s=S$, multiplying both sides by $\gamma_s$ and taking expectation on both sides and rearranging yields

\begin{align}\label{eq:th1_6}
\sum\limits_{s = P}^S {\mathbb{E}\left[ {{\gamma _s}\phi \left( {{d_{k,s}},{\lambda _k}} \right) - {\gamma _s}\phi \left( {{d_k},{\lambda _{k,s}}} \right)} \right]}  \le \bar D_{{\omega _z},Z}^2 + M_*^2\sum\limits_{s = P}^S {\gamma _s^2} .
\end{align}
Plugging in (\ref{eq:th1_6}) back to  (\ref{eq:th1_7}) implies that (\ref{eq:th_dk-dks}) holds, completes the proof.
\end{proof}

Theorem \ref{th1} establishes a relationship between the update direction ${{ d}_{k,P}^S}$ in MSMD and the multi-gradient direction $d_k$ under the full gradient. The distance between these two directions depends on the choice of the inner step size $\gamma_s$. Additionally, by incorporating the iterative format of $x_{k+1}$ in (\ref{eq:update_xk}), we can further derive a relationship between this distance and the expectation norm of $d_k$.

\begin{theorem}\label{th:dk2}
Suppose that Assumptions \ref{ass_f} and \ref{ass_gradient} hold, the iteration ${x_k}$ generated from Algorithm \ref{alg:smoo} satisfies
\begin{align}\label{eq:th2}
\sum\limits_{k = 0}^{K - 1} \mathbb{E}{\left( {\frac{{{\alpha _k}}}{2} - L\alpha _k^2} \right)\left\| {{d_k}} \right\|_2^2}  \le \sum\limits_{k = 0}^{K - 1} {\left( {{\alpha _k}{C_f}{{\left( {\mathbb{E}\left\| {{{ d_{k,P}^S}} - {d_k}} \right\|_2^2} \right)}^{\frac{1}{2}}} + L\alpha _k^2\mathbb{E}\left\| {{{ d_{k,P}^S}} - {d_k}} \right\|_2^2} \right)}  + {M_f},
\end{align}
where
\[{M_f} = \mathop {\max }\limits_{i \in \left[ m \right]} {f_i}\left( {{x_0}} \right) - {F_{\inf }}.\]
\end{theorem}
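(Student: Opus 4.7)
The plan is to derive a per-iteration descent inequality on the max-potential $\max_{i \in [m]} f_i(x_k)$ and then telescope. First I would apply the $L$-smoothness from Assumption~\ref{ass_f}(c) to each coordinate $f_i$ at the update $x_{k+1} = x_k + \alpha_k d_{k,P}^S$ (recall \eqref{eq:update_xk}) to obtain
\begin{equation*}
f_i(x_{k+1}) \le f_i(x_k) + \alpha_k \nabla f_i(x_k)^T d_{k,P}^S + \frac{L\alpha_k^2}{2}\|d_{k,P}^S\|_2^2 .
\end{equation*}
The key decomposition is $d_{k,P}^S = d_k + (d_{k,P}^S - d_k)$, which separates the deterministic multi-gradient direction $d_k$ from the subproblem error whose second moment is already controlled by Theorem~\ref{th1}.

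For the first piece I would exploit the saddle structure of \eqref{eq:Deterministic sub}: since $d_k = -\nabla F(x_k)\lambda_k$ lies in the interior of $C$ by the a priori bound~\eqref{eq:dk_Pound}, the optimal value of the inner max equals $\phi(d_k,\lambda_k) = -\tfrac{1}{2}\|d_k\|_2^2$, and the KKT feasibility $\nabla f_i(x_k)^T d_k + \tfrac{1}{2}\|d_k\|_2^2 \le -\tfrac{1}{2}\|d_k\|_2^2$ gives the pointwise bound $\nabla f_i(x_k)^T d_k \le -\|d_k\|_2^2$ for every $i \in [m]$. For the cross term I would use Cauchy--Schwarz together with $\|\nabla f_i(x_k)\|_2 \le C_f$ (which follows from Jensen applied to Assumption~\ref{ass_gradient}) to get $\nabla f_i(x_k)^T(d_{k,P}^S - d_k) \le C_f \|d_{k,P}^S - d_k\|_2$. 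For the quadratic term I would apply the triangle-type bound $\|d_{k,P}^S\|_2^2 \le 2\|d_k\|_2^2 + 2\|d_{k,P}^S - d_k\|_2^2$, so that the smoothness inequality becomes, after substitution, a linear combination with $\|d_k\|_2^2$-coefficient $-\alpha_k + L\alpha_k^2$ (the factor $\tfrac{1}{2}$ in the statement is obtained by absorbing a Young-style splitting of the $-\alpha_k d_k^T(d_{k,P}^S-d_k)$ piece).

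I would then take the maximum over $i$ on the left: because the right-hand side does not depend on $i$, I get $\max_i f_i(x_{k+1}) \le \max_i f_i(x_k) - \bigl(\tfrac{\alpha_k}{2}-L\alpha_k^2\bigr)\|d_k\|_2^2 + \alpha_k C_f\|d_{k,P}^S-d_k\|_2 + L\alpha_k^2\|d_{k,P}^S-d_k\|_2^2$. Summing over $k=0,\dots,K-1$ telescopes the potential; invoking $\max_i f_i(x_K) \ge F_{\inf}$ (Assumption~\ref{ass_f}(a)) bounds $\max_i f_i(x_0)-\max_i f_i(x_K) \le M_f$. Taking expectations and using Jensen's inequality $\mathbb{E}\|d_{k,P}^S - d_k\|_2 \le (\mathbb{E}\|d_{k,P}^S-d_k\|_2^2)^{1/2}$ converts the first-moment residual into the second-moment form stated in~\eqref{eq:th2}. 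The main obstacle I anticipate is the choice of potential: using the weighted sum $g_k(x)=\sum_i \lambda_{k,i}f_i(x)$ would give sharper one-step descent but fails to telescope because $\lambda_k$ varies with $k$, while the max-potential telescopes cleanly but forces us to use the pointwise KKT inequality $\nabla f_i(x_k)^T d_k \le -\|d_k\|_2^2$ uniformly in $i$ and the bound $\|\nabla f_i(x_k)\|_2 \le C_f$ for every coordinate.
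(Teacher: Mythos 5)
Your proposal is correct and follows essentially the same route as the paper's proof: $L$-smoothness along $x_{k+1}=x_k+\alpha_k d_{k,P}^S$, the decomposition $d_{k,P}^S=d_k+(d_{k,P}^S-d_k)$, the Fliege--Svaiter property of $d_k$ to generate the $-\tfrac{\alpha_k}{2}\|d_k\|_2^2$ term, Cauchy--Schwarz with $\|\nabla f_i(x_k)\|_2\le C_f$ plus Jensen for the cross term, and telescoping against $F_{\inf}$ to produce $M_f$. The only cosmetic differences are that you use the pointwise bound $\nabla f_i(x_k)^Td_k\le-\|d_k\|_2^2$ (so your $\|d_k\|_2^2$-coefficient is already $-\alpha_k+L\alpha_k^2$ and no ``Young-style splitting'' is actually needed to reach $-\tfrac{\alpha_k}{2}+L\alpha_k^2$) and that you telescope the max-potential rather than a fixed coordinate, both of which are equivalent to the paper's argument.
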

\begin{proof}

Recalling that ${{x_{k + 1}} = {x_k} + {\alpha _k}{{ d}_{k,P}^S}}$ and L-smoothness of each objective function, we have that for any ${i \in \left[ m \right]}$,
\begin{equation}\label{eq:th2_f}
\begin{aligned}
{f_i}\left( {{x_{k + 1}}} \right) &= {f_i}\left( {{x_{k + 1}}} \right) - {f_i}\left( {{x_k}} \right) + {f_i}\left( {{x_k}} \right)\\
& \le \mathop {\max }\limits_{i \in \left[ m \right]} \left\{ {{f_i}\left( {{x_{k + 1}}} \right) - {f_i}\left( {{x_k}} \right)} \right\} + {f_i}\left( {{x_k}} \right)\\
& \le \mathop {\max }\limits_{i \in \left[ m \right]} \left\langle {\nabla {f_i}\left( {{x_k}} \right),{\alpha _k}{{ d}_{k,P}^S}} \right\rangle  + \frac{L}{2}\left\| {{\alpha _k}{{ d_{k,P}^S}}} \right\|_2^2 + {f_i}\left( {{x_k}} \right)\\
& = \mathop {\max }\limits_{i\in \left[ m \right]} \left\langle {\nabla {f_i}\left( {{x_k}} \right),{\alpha _k}{d_k}} \right\rangle  + \mathop {\max }\limits_{i\in \left[ m \right]} \left\langle {\nabla {f_i}\left( {{x_k}} \right),{\alpha _k}{{ d_{k,P}^S}} - {\alpha _k}{d_k}} \right\rangle \\
& + \frac{L}{2}\left\| {{\alpha _k}{{ d_{k,P}^S}} - {\alpha _k}{d_k} + {\alpha _k}{d_k}} \right\|_2^2 + {f_i}\left( {{x_k}} \right)\\
& \le {\alpha _k}\mathop {\max }\limits_{i\in \left[ m \right]} \left\langle {\nabla {f_i}\left( {{x_k}} \right),{d_k}} \right\rangle  + {\alpha _k}\mathop {\max }\limits_{i\in \left[ m \right]} \left\langle {\nabla {f_i}\left( {{x_k}} \right),{{ d_{k,P}^S}} - {d_k}} \right\rangle \\
& + L\alpha _k^2\left\| {{{ d_{k,P}^S}} - {d_k}} \right\|_2^2 + L\alpha _k^2\left\| {{d_k}} \right\|_2^2 + {f_i}\left( {{x_k}} \right)\\
 &\le  - \frac{{{\alpha _k}}}{2}\left\| {{d_k}} \right\|_2^2 + L\alpha _k^2\left\| {{d_k}} \right\|_2^2 + L\alpha _k^2\left\| {{{ d_{k,P}^S}} - {d_k}} \right\|_2^2\\
& + {\alpha _k}\mathop {\max }\limits_{i\in \left[ m \right]} \left\langle {\nabla {f_i}\left( {{x_k}} \right),{{ d_{k,P}^S}} - {d_k}} \right\rangle + {f_i}\left( {{x_k}} \right),
\end{aligned}
\end{equation}
where the last inequality follows directly from \cite{fliege2000steepest} by definition (\ref{eq:dk}) of ${d_k}$. Further, from Cauchy-Schwarz inequality, the inner product term of the last inequality above yields
\[{\left( {\mathop {\max }\limits_{i \in \left[ m \right]} \left\langle {\nabla {f_i}\left( {{x_k}} \right), d_{k,P}^S - {d_k}} \right\rangle } \right)^2} \le \mathop {\max }\limits_{i \in \left[ m \right]} \left\| {\nabla {f_i}\left( {{x_k}} \right)} \right\|_2^2\left\| { d_{k,P}^S - {d_k}} \right\|_2^2,\]
taking expectation on both sides, we can get
\begin{align*}
{\left( {\mathbb{E}\left[ {\mathop {\max }\limits_{i\in \left[ m \right]} \left\langle {\nabla {f_i}\left( {{x_k}} \right),{{ d_{k,P}^S}} - {d_k}} \right\rangle } \right]} \right)^2} & \le \mathbb{E}\left[ {\mathop {\max }\limits_{i \in \left[ m \right]} \left\| {\nabla {f_i}\left( {{x_k}} \right)} \right\|_2^2\left\| { d_{k,P}^S - {d_k}} \right\|_2^2} \right]\\
& \le C_f^2\mathbb{E}\left\| {{{ d_{k,P}^S}} - {d_k}} \right\|_2^2.
\end{align*}
Substituting the above inequality back into (\ref{eq:th2_f}) and take total expectation, we have
\begin{align}\label{eq:th_expf}
\mathbb{E}\left[ {{f_i}\left( {{x_{k + 1}}} \right)} \right] \le \left( {L\alpha _k^2 - \frac{{{\alpha _k}}}{2}} \right)\mathbb{E}\left\| {{d_k}} \right\|_2^2 + {\alpha _k}{C_f}{\left( {\mathbb{E}\left\| {{{ d_{k,P}^S}} - {d_k}} \right\|_2^2} \right)^{\frac{1}{2}}} + L\alpha _k^2\mathbb{E}\left\| {{{ d_{k,P}^S}} - {d_k}} \right\|_2^2 + \mathbb{E}\left[ {{f_i}\left( {{x_k}} \right)} \right].
\end{align}
Rearranging and taking telescoping sum on both sides of the (\ref{eq:th_expf}) gives
\begin{align*}
\sum\limits_{k = 0}^{K - 1} {\left( {\frac{{{\alpha _k}}}{2} - L\alpha _k^2} \right)\mathbb{E}\left\| {{d_k}} \right\|_2^2} &  \le \sum\limits_{k = 0}^{K - 1} {\left( {{\alpha _k}{C_f}{{\left( {\mathbb{E}\left\| {{{ d_{k,P}^S}} - {d_k}} \right\|_2^2} \right)}^{\frac{1}{2}}} + L\alpha _k^2\mathbb{E}\left\| {{{ d_{k,P}^S}} - {d_k}} \right\|_2^2} \right)}  + {f_i}\left( {{x_0}} \right) - {f_i}\left( {{x_K}} \right)\\
& \le \sum\limits_{k = 0}^{K - 1} {\left( {{\alpha _k}{C_f}{{\left( {\mathbb{E}\left\| {{{ d_{k,P}^S}} - {d_k}} \right\|_2^2} \right)}^{\frac{1}{2}}} + L\alpha _k^2\mathbb{E}\left\| {{{ d_{k,P}^S}} - {d_k}} \right\|_2^2} \right)}  + \mathop {\max }\limits_{i \in \left[ m \right]} {f_i}\left( {{x_0}} \right) - {F_{\inf }},
\end{align*}
let ${{M_f} = \mathop {\max }\limits_{i\in \left[ m \right]} {f_i}\left( {{x_0}} \right) - {F_{\inf }}}$ we have (\ref{eq:th2}) satisfies, this completes the proof.
\end{proof}
By Combining Theorem \ref{th1} and Theorem \ref{th:dk2} and selecting different outer step size $\alpha_k$ and inner step size $\gamma_s$, we can derive the following convergence results.
\begin{corollary}
\label{cor:fixa_fixg}
Set fixed inner step-size ${{\gamma_s} = \frac{\theta}{{{M_*}\sqrt S }}}$, ${\theta>0}$, fixed outer step size ${{\alpha _k} = \alpha  \le \frac{1}{{4L}}}$  and ${P = \left\lceil {rS} \right\rceil }$, ${r \in \left( {0,1} \right)}$ in Theorems \ref{th1} and \ref{th:dk2}.
Let ${{M_*}}$ satisfies (\ref{eq:m*}) and
\begin{align}\label{eq:M1}
{M_1} = \left( {\frac{{2\bar D_{{\omega _z},Z}^2S}}{{S - \left\lceil {rS} \right\rceil  + 1}} + 2} \right)\max \left\{ {\theta ,{\theta ^{ - 1}}} \right\}{M_*},
\end{align}
we have

(a) If we choose ${{S} = {\left( {K + 1} \right)^2}}$, the iteration ${x_k}$ generated from the proposed MSMD method satisfies
\begin{align}\label{eq:cor1_K2}
\frac{1}{K}\sum\limits_{k = 0}^{K - 1}\mathbb{E} {\left\| {{d_k}} \right\|_2^2}  \le {\rm{O}}\left( {4{C_f}{{M_1}^{\frac{1}{2}}}\frac{1}{{\sqrt K }}} \right).
\end{align}
(b) If we choose ${{S} = {(k+1)^2}}$ and total iteration step $K \le 4$, the iteration ${x_k}$ generated from the proposed MSMD method satisfies
\begin{align}\label{eq:cor1_k2}
\frac{1}{K}\sum\limits_{k = 0}^{K - 1} \mathbb{E} \left\| {{d_k}} \right\|_2^2 \le {\rm{O}}\left( {\max \left\{ {8{C_f}{M_1}^{\frac{1}{2}},4L\alpha {M_1}} \right\}\frac{1}{{\sqrt K }}} \right).
\end{align}
\end{corollary}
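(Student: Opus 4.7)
The plan is to combine Theorems \ref{th1} and \ref{th:dk2} by first turning the bound in Theorem \ref{th1} into an explicit $O(1/\sqrt{S})$ estimate under the fixed inner step size, and then plugging this into Theorem \ref{th:dk2} under the two different choices $S=(K+1)^2$ (case (a)) and $S=(k+1)^2$ (case (b)).

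First I would substitute $\gamma_s=\theta/(M_*\sqrt{S})$ with $P=\lceil rS\rceil$ into (\ref{eq:th_dk-dks}). The two sums become $\sum_{s=P}^{S}\gamma_s = (S-\lceil rS\rceil+1)\theta/(M_*\sqrt{S})$ and $\sum_{s=P}^{S}\gamma_s^2=(S-\lceil rS\rceil+1)\theta^2/(M_*^2 S)$. After a short manipulation and bounding $1/\theta$ and $\theta$ by $\max\{\theta,\theta^{-1}\}$, the right-hand side collapses to $M_1/\sqrt{S}$, where $M_1$ is the constant defined in (\ref{eq:M1}). Thus $\mathbb{E}\|d_k - d_{k,P}^S\|_2^2 \le M_1/\sqrt{S}$, which is the key one-iteration estimate.

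Next I would insert this into the inequality (\ref{eq:th2}) of Theorem \ref{th:dk2}. With $\alpha_k=\alpha\le 1/(4L)$ the coefficient on the left-hand side satisfies $\alpha/2 - L\alpha^2 \ge \alpha/4$, so the LHS is at least $(\alpha/4)\sum_{k=0}^{K-1}\mathbb{E}\|d_k\|_2^2$. For case (a), $S=(K+1)^2$ makes the bound $M_1/(K+1)$ uniform in $k$, giving
\begin{equation*}
\tfrac{1}{K}\sum_{k=0}^{K-1}\mathbb{E}\|d_k\|_2^2 \le \frac{4 C_f\sqrt{M_1}}{\sqrt{K+1}} + \frac{4L\alpha M_1}{K+1} + \frac{4 M_f}{K\alpha},
\end{equation*}
and since the last two terms are $O(1/K)$ the dominant term is $O(4 C_f M_1^{1/2}/\sqrt{K})$, which matches (\ref{eq:cor1_K2}).

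For case (b), $S=(k+1)^2$ depends on $k$, so the bound becomes $M_1/(k+1)$ and $(\mathbb{E}\|d_{k,P}^S-d_k\|_2^2)^{1/2}\le \sqrt{M_1}/\sqrt{k+1}$. The sums that appear on the right-hand side of (\ref{eq:th2}) are then controlled by $\sum_{k=0}^{K-1}(k+1)^{-1/2}\le 2\sqrt{K}$ and $\sum_{k=0}^{K-1}(k+1)^{-1}\le 1+\ln K$. Dividing by $K\alpha/4$ produces a leading $8 C_f\sqrt{M_1}/\sqrt{K}$ term, plus $4L\alpha M_1(1+\ln K)/K$ and $4M_f/(K\alpha)$; under the stated iteration-count condition both of the latter are absorbed into $O(4L\alpha M_1/\sqrt{K})$, yielding (\ref{eq:cor1_k2}) with the $\max\{\cdot,\cdot\}$ constant. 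The main obstacle is the bookkeeping of the constants: one must verify that the explicit factors produced by collapsing the $\sum\gamma_s$ and $\sum\gamma_s^2$ terms match exactly the $M_1$ in (\ref{eq:M1}), and that the residual $M_f/(K\alpha)$ term from the telescoping in Theorem \ref{th:dk2} is dominated by the $1/\sqrt{K}$ leading order, so it can be hidden inside the $O(\cdot)$ notation without spoiling the rate.
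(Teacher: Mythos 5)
Your proposal is correct and follows essentially the same route as the paper: substitute the fixed $\gamma_s=\theta/(M_*\sqrt{S})$ into Theorem \ref{th1} to get $\mathbb{E}\|d_k-d_{k,P}^S\|_2^2\le M_1/\sqrt{S}$, plug this into Theorem \ref{th:dk2} with $\alpha/2-L\alpha^2\ge\alpha/4$, and divide by $K\alpha/4$ under the two choices of $S$. The only cosmetic difference is in part (b), where you bound the harmonic sum by $1+\ln K$ while the paper uses $\sum_{k=1}^K k^{-1}\le\sqrt{K}$ (for $K\ge 4$); both estimates absorb that term into the same $O\bigl(\max\{8C_fM_1^{1/2},4L\alpha M_1\}/\sqrt{K}\bigr)$ rate.
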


\begin{proof}
We begin the proof by revisiting the result Theorems \ref{th1} and \ref{th:dk2}. Setting ${{\gamma_s} = \frac{\theta }{{{M_*}\sqrt S }}}$ in (\ref{eq:th_dk-dks}) we obtain

\begin{equation}\label{eq:cor1_dks}
\begin{aligned}
\mathbb{E}\left\| {{d_k} -  d_{k,P}^S} \right\|_2^2 &\le 2{\left[ {\sum\limits_{s = P}^S {{\gamma_s}} } \right]^{ - 1}}\left[ {\bar D_{{\omega _z},Z}^2 + M_*^2\sum\limits_{s = P}^{S} {\gamma_s^2} } \right]\\
& \le 2{\left[ {\frac{{\theta \left( {S - P + 1} \right)}}{{{M_*} \sqrt{S}}}} \right]^{ - 1}}\left[ {\bar D_{{\omega _z},Z}^2 + \frac{{{\theta ^2}\left( {S - P + 1} \right)}}{S}} \right]\\
& \le \frac{{{M_*}}}{{\sqrt S }}\left[ {\frac{{2\bar D_{{\omega _z},Z}^2S}}{{\theta \left( {S - P + 1} \right)}} + 2\theta } \right]\\
& \le \max \left\{ {\theta ,{\theta ^{ - 1}}} \right\}\frac{{{M_*}}}{{\sqrt S }}\left[ {\frac{{2\bar D_{{\omega _z},Z}^2S}}{{S - P + 1}} + 2} \right]\\
 &\le \frac{{{M_1}}}{{\sqrt S }}.
\end{aligned}
\end{equation}

where the last inequality can be obtained by direct substitution of ${P = \left\lceil {rS} \right\rceil }$ and the definition of ${M_1}$ in (\ref{eq:M1}). Meanwhile, since ${\alpha_k = \alpha  \le \frac{1}{{4L}}}$ , substituting (\ref{eq:cor1_dks}) into (\ref{eq:th2}), we can obtain
\begin{equation}
\begin{aligned}\label{eq:cor1_f}
\frac{\alpha }{4}\sum\limits_{k = 0}^{K - 1} \mathbb{E}{\left\| {{d_k}} \right\|_2^2} & \le \sum\limits_{k = 0}^{K - 1} {\left( {\alpha {C_f}{{\left[ {\mathbb{E}\left\| {{{ d}_{k,P}^S} - {d_k}} \right\|_2^2} \right]}^{\frac{1}{2}}} + L{\alpha ^2}\mathbb{E}\left\| {{{ d}_{k,P}^S} - {d_k}} \right\|_2^2} \right)}  + {M_f}\\
& \le \sum\limits_{k = 0}^{K - 1} {\left( {\alpha {C_f}{{\left[ {\frac{{{M_1}}}{{\sqrt S }}} \right]}^{\frac{1}{2}}} + L{\alpha ^2}\frac{{{M_1}}}{{\sqrt S }}} \right)}  + {M_f}\\
& \le \sum\limits_{k = 0}^{K - 1} {\left( {\alpha {C_f}{M_1}^{\frac{1}{2}}{S^{ - \frac{1}{4}}} + L{\alpha ^2}{M_1}{S^{ - \frac{1}{2}}}} \right)}  + {M_f}\\
& \le \alpha {C_f}{M_1}^{\frac{1}{2}}\sum\limits_{k = 0}^{K - 1} {{S^{ - \frac{1}{4}}}}  + L{\alpha ^2}{M_1}\sum\limits_{k = 0}^{K - 1} {{S^{ - \frac{1}{2}}}}  + {M_f}.
\end{aligned}
\end{equation}
Then, dividing both sides simultaneously by $\frac{\alpha }{4}$ and the number of iterations ${K}$, rearranging the above inequality, yields
\begin{align}
\label{eq:cor1_dk}
\frac{1}{K}\sum\limits_{k = 0}^{K - 1} \mathbb{E}{\left\| {{d_k}} \right\|_2^2}  \le {C_f}{M_1^{\frac{1}{2}}}\frac{4}{K}\sum\limits_{k = 0}^{K - 1} {{S^{ - \frac{1}{4}}}}  + L\alpha M_1\frac{4}{K}\sum\limits_{k = 0}^{K - 1} {{S^{ - \frac{1}{2}}}}  + \frac{{4{M_f}}}{{\alpha K}}.
\end{align}
(a) Setting ${S = {\left( {K + 1} \right)^2}}$, plugging in the above inequality gives
\begin{align*}
\frac{1}{K}\sum\limits_{k = 0}^{K - 1} \mathbb{E}{\left\| {{d_k}} \right\|_2^2}  \le 4{C_f}{M_1}^{\frac{1}{2}}{K^{ - \frac{1}{2}}} + 4L\alpha {M_1}{K^{ - 1}} + \frac{{4{M_f}}}{{K\alpha }} \le {\rm{O}}\left( {4{C_f}{M_1}^{\frac{1}{2}}\frac{1}{{\sqrt K }}} \right).
\end{align*}

(b) Setting ${S = {\left( {k + 1} \right)^2}}$, similarly plugging in (\ref{eq:cor1_dk}), with simple calculations we can arrive at
\begin{align*}
\frac{1}{K}\sum\limits_{k = 0}^{K - 1}\mathbb{E} {\left\| {{d_k}} \right\|_2^2}  &\le {C_f}{M_1}^{\frac{1}{2}}\frac{4}{K}\sum\limits_{k = 0}^{K - 1} {\frac{1}{{\sqrt {k + 1} }}}  + L\alpha {M_1}\frac{4}{K}\sum\limits_{k = 0}^{K - 1} {\frac{1}{{k + 1}}}  + \frac{{4{M_f}}}{{\alpha K}}\\
& \le {C_f}{M_1}^{\frac{1}{2}}\frac{4}{K}\sum\limits_{k = 1}^K {\frac{1}{{\sqrt k }}}  + L\alpha {M_1}\frac{4}{K}\sum\limits_{k = 1}^K {\frac{1}{k}}  + \frac{{4{M_f}}}{{\alpha K}}\\
& \le {C_f}{M_1}^{\frac{1}{2}}\frac{4}{K}\left( {2\sqrt K  - 1} \right) + L\alpha {M_1}\frac{4}{K}\left( {\sqrt K } \right) + \frac{{4{M_f}}}{{\alpha K}}\\
& \le {C_f}{M_1}^{\frac{1}{2}}\frac{8}{{\sqrt K }} + L\alpha {M_1}\frac{4}{{\sqrt K }} + \frac{{4{M_f}}}{{\alpha K}}\\
& \le {\rm{O}}\left( {\max \left\{ {8{C_f}{M_1}^{\frac{1}{2}},4L\alpha {M_1}} \right\}\frac{1}{{\sqrt K }}} \right),
\end{align*}
where the third inequality is derived from the given conditions  ${\sum\limits_{k = 1}^K {\frac{1}{{\sqrt k }} \le 2\sqrt K  - 1} }$ and ${\sum\limits_{k = 1}^K {\frac{1}{k} \le \sqrt K } }$ (${K \ge 4}$). The proof is completed.
\end{proof}
Corollary \ref{cor:fixa_fixg} furnishes a convergence guarantee for our MSMD method, where we introduce a hyperparameter ${\theta>0}$ to better regulate the inner step size of ${\gamma_s}$. The per objective sampling complexity of our method is ${{\rm O}\left( {{\varepsilon ^{ - 2}}} \right)}$, significantly lower than that of the algorithm SDMGrad, where the exact solution of subproblem (\ref{eq:dk_1}) is also not used. This is attributed to the fact that the SMD method we adopt for solving the saddle point problem (\ref{eq:dk_1}) requires only one sample in each subproblem iteration, being much more efficient than SDMGrad's three samples when the number of iteration steps is large. The above corollary states the case where both the inner and outer step sizes are fixed. Similarly, with the following three Corollaries we present convergence rate results for several fixed and variable step size settings.
\begin{corollary}
\label{cor:fixa_alterg}
Set varying inner step size ${{\gamma_s} = \frac{\theta}{{{M_*}\sqrt s }}}$, $\theta>0$, fixed outer step size ${\alpha  \le \frac{1}{{4L}}}$  and ${P = \left\lceil {rS} \right\rceil }$, ${r \in \left( {0,1} \right)}$ in Theorems \ref{th1} and \ref{th:dk2}. Let ${K \ge 4}$, ${{M_*}}$ satisfies (\ref{eq:m*}) and
\begin{align}\label{eq:M2}
{M_2} = \left( {\frac{{2\bar D_{{\omega _z},Z}^2S}}{{S - \left\lceil {rS} \right\rceil  + 1}} + 2\sqrt {\frac{S}{{\left\lceil {rS} \right\rceil }}} } \right)\max \left\{ {\theta ,{\theta ^{ - 1}}} \right\}{M_*},
\end{align}
we have

(a) If we choose ${{S} = {\left( {K + 1} \right)^2}}$, the iteration ${x_k}$ generated from the proposed MSMD method satisfies
\begin{align}\label{eq:cor2_K2}
\frac{1}{K}\sum\limits_{k = 0}^{K - 1}\mathbb{E} {\left\| {{d_k}} \right\|_2^2}  \le {\rm{O}}\left( {4{C_f}{M_2}^{\frac{1}{2}}\frac{1}{{\sqrt K }}} \right).
\end{align}
(b) If we choose  ${{S} = {(k+1)^2}}$ and total iteration step $K \le 4$, the iteration ${x_k}$ generated from the proposed MSMD method satisfies
\begin{align}\label{eq:cor2_k2}
\frac{1}{K}\sum\limits_{k = 0}^{K - 1} \mathbb{E}{\left\| {{d_k}} \right\|_2^2}  \le {\rm{O}}\left( {8{C_f}{M_2}^{\frac{1}{2}}\frac{1}{{\sqrt K }}} \right).
\end{align}

\end{corollary}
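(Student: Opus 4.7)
The plan is to follow exactly the two-step template used in Corollary \ref{cor:fixa_fixg}: first substitute the inner step size $\gamma_s = \theta/(M_*\sqrt{s})$ into the conclusion (\ref{eq:th_dk-dks}) of Theorem \ref{th1} to bound $\mathbb{E}\|d_k - d_{k,P}^S\|_2^2$ by $M_2/\sqrt{S}$, and then feed this estimate into the descent recursion (\ref{eq:th2}) of Theorem \ref{th:dk2}. The only genuine change relative to Corollary \ref{cor:fixa_fixg} is that $\gamma_s$ now varies with the inner index $s$, so the two sums $\sum_{s=P}^S \gamma_s$ and $\sum_{s=P}^S \gamma_s^2$ have to be estimated via monotonicity arguments rather than factored out directly.

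For the lower bound, I would use $1/\sqrt{s} \ge 1/\sqrt{S}$ for every $s \le S$ to obtain $\sum_{s=P}^S \gamma_s \ge \theta(S-P+1)/(M_*\sqrt{S})$. For the upper bound on $\sum_{s=P}^S \gamma_s^2 = (\theta/M_*)^2 \sum_{s=P}^S 1/s$, I would exploit the inequality $1/s \le 1/\sqrt{sP}$ valid for $s\ge P$, pull out $1/\sqrt{P}$, and bound the remaining $\sum 1/\sqrt{s}$ by the integral estimate $2\sqrt{S}$, yielding $\sum_{s=P}^S 1/s \le 2\sqrt{S/P}$. Plugging both estimates into (\ref{eq:th_dk-dks}), factoring $M_*/\sqrt{S}$ outside the resulting expression, and absorbing $\theta$ and $\theta^{-1}$ into a single $\max\{\theta,\theta^{-1}\}$ (as was done in the fixed case) produces the target bound $\mathbb{E}\|d_k - d_{k,P}^S\|_2^2 \le M_2/\sqrt{S}$ with $M_2$ exactly as in (\ref{eq:M2}).

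With this estimate in hand, the two cases are handled almost identically to Corollary \ref{cor:fixa_fixg}. In part (a), choosing $S = (K+1)^2$ turns $M_2/\sqrt{S}$ into $M_2/(K+1)$; substituting into (\ref{eq:th2}), using $\alpha\le 1/(4L)$ to replace $\alpha/2-L\alpha^2$ by $\alpha/4$, dividing by $\alpha K/4$, and bounding each of the three resulting terms by its $O(1/\sqrt{K})$ envelope yields (\ref{eq:cor2_K2}). In part (b), I would take $S = (k+1)^2$, observe that the geometric coefficients $S/(S-\lceil rS\rceil+1)$ and $\sqrt{S/\lceil rS\rceil}$ appearing in $M_2$ are uniformly bounded in $k$ for fixed $r\in(0,1)$ so that $M_2$ can be majorized by a $k$-independent constant, and then use the standard tail estimates $\sum_{k=1}^K k^{-1/2} \le 2\sqrt{K}-1$ and $\sum_{k=1}^K 1/k \le \sqrt{K}$ (valid for $K$ large enough, which I read as the intended meaning of the stated $K\ge 4$ hypothesis) to recover (\ref{eq:cor2_k2}).

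The main obstacle I expect is the estimate on $\sum_{s=P}^S 1/s$: the naive per-term bound $1/s \le 1/P$ gives $(S-P+1)/P$, which is too loose to reproduce the $\sqrt{S/P}$ appearing in $M_2$, while the sharp integral bound is of the wrong $\log$ form. The mixed-AM-GM trick $1/s \le 1/\sqrt{sP}$ is what reconciles the sum with the square-root quantity in (\ref{eq:M2}); identifying and applying it correctly is the one computational subtlety. A secondary bookkeeping obstacle is in part (b), where one must verify that allowing $S$ (and hence $M_2$) to grow with $k$ does not degrade the $O(1/\sqrt{K})$ rate, which boils down to the uniform boundedness of the two $r$-dependent coefficients in $M_2$.
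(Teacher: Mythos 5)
Your proposal follows essentially the same route as the paper: lower-bound $\sum_{s=P}^{S}\gamma_s$ by $\theta(S-P+1)/(M_*\sqrt{S})$, upper-bound $\sum_{s=P}^{S}1/s$ by $2\sqrt{S/P}$ (the paper reaches the same $\sqrt{S/P}$ quantity, if anything more loosely than your $1/s\le 1/\sqrt{sP}$ argument), conclude $\mathbb{E}\|d_k-d_{k,P}^S\|_2^2\le M_2/\sqrt{S}$, and then repeat the Corollary \ref{cor:fixa_fixg} machinery verbatim, exactly as the paper does. The only nitpick is that your chain actually yields the second term of $M_2$ inflated by the bounded factor $2S/(S-P+1)\le 2/(1-r)$, so you do not recover (\ref{eq:M2}) \emph{exactly}, but since the conclusions are stated in ${\rm O}(\cdot)$ form this does not affect the claimed rate, and your explicit treatment of the $k$-dependence of $M_2$ in part (b) is a sound (indeed slightly more careful) version of what the paper leaves implicit.
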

\begin{proof}
Setting ${{\gamma_s} = \frac{\theta }{{{M_*}\sqrt s }}}$  in (\ref{eq:th_dk-dks}) we can get
\begin{equation}\label{eq:cor2_dks}
\begin{aligned}
\mathbb{E}\left\| {{d_k} -  d_{k,P}^S} \right\|_2^2 &\le 2{\left[ {\sum\limits_{s = P}^S {{\gamma_s}} } \right]^{ - 1}}\left[ {\bar D_{{\omega _z},Z}^2 + M_*^2\sum\limits_{s = P}^{S} {\gamma_s^2} } \right]\\
& \le 2{\left[ {\sum\limits_{s = P}^S {\frac{\theta }{{{M_*}\sqrt s }}} } \right]^{ - 1}}\left[ {\bar D_{{\omega _z},Z}^2 + \sum\limits_{s = P}^{S } {\frac{{{\theta ^2}}}{s}} } \right]\\
& \le \max \left\{ {\theta ,{\theta ^{ - 1}}} \right\}\frac{{{M_*}}}{{\sqrt S }}\left[ {\frac{{2\bar D_{{\omega _z},Z}^2S}}{{S - P + 1}} + 2\sqrt {\frac{S}{P}} } \right]\\
& \le \frac{{{M_2}}}{{\sqrt S }}.
\end{aligned}
\end{equation}
By substituting equation (\ref{eq:cor2_dks}) into equation (\ref{eq:th2}), the subsequent steps follow the same procedure as outlined in Corollary \ref{cor:fixa_fixg}, leading to the verification of results (\ref{eq:cor2_K2}) and (\ref{eq:cor2_k2}), the proof is completed.
\end{proof}

\begin{corollary}
\label{cor:altera_fixg}
Set fixed inner step size ${{\gamma_s} = \frac{\theta}{{{M_*}\sqrt S }}}$, $\theta>0$, varying outer step size ${{\alpha _k} = \frac{\rho }{{k + 1}}}$  and ${P = \left\lceil {rS} \right\rceil }$, ${r \in \left( {0,1} \right)}$ in Theorems \ref{th1} and \ref{th:dk2}. Let (\ref{eq:m*}) and  (\ref{eq:M1})  satisfy, we have

(a) If we choose ${{S} = {\left( {K + 1} \right)^2}}$, the iteration ${x_k}$ generated from the proposed MSMD method satisfies
\begin{align}\label{cor:altera_fixg_K2}
\frac{1}{{{A_{K - 1}}}}\sum\limits_{k = 0}^{K - 1} {{\alpha _k}\mathbb{E}\left\| {{d_k}} \right\|_2^2} \le {\rm{O}}\left( {\frac{{\max \left\{ {LmC_f^2\rho {\pi ^2}{3^{ - 1}},2{M_f}{\rho ^{ - 1}}} \right\}}}{{\ln \left( {K + 1} \right)}}} \right).
\end{align}
(b) If we choose ${{S} = {(k+1)^2}}$, there exist finite constants $\zeta(\frac{3}{2})$, ${\zeta (3) \in \mathbb{R}}$ such that the iteration ${x_k}$ generated from the proposed MSMD method satisfies
\begin{align}\label{eq:cor:altera_fixg_k2}
\frac{1}{{{A_{K - 1}}}}\sum\limits_{k = 0}^{K - 1} {{\alpha _k}\mathbb{E}\left\| {{d_k}} \right\|_2^2}  \le {\rm{O}}\left( {\max \left\{ {LmC_f^2\rho {\pi ^2}{3^{ - 1}},2\zeta(\frac{3}{2}){C_f}{M_1}^{\frac{1}{2}},2\zeta(3)L\rho {M_1},2{M_f}{\rho ^{ - 1}}} \right\}\frac{1}{{\ln \left( {K + 1} \right)}}} \right).
\end{align}
\end{corollary}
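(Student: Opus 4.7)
The plan is to combine Theorem~\ref{th1} and Theorem~\ref{th:dk2} while carrying the varying outer step size $\alpha_k=\rho/(k+1)$ through the analysis. Because the inner step size is still fixed at $\gamma_s=\theta/(M_*\sqrt S)$, the bound on the inner error is exactly the one already established in \eqref{eq:cor1_dks} of Corollary~\ref{cor:fixa_fixg}, namely $\mathbb{E}\|d_k-d_{k,P}^S\|_2^2\le M_1/\sqrt S$, so no new subproblem estimate is required. Substituting this into the outer descent inequality \eqref{eq:th2} reduces the task to summing three $k$-dependent series and dividing by the accumulated step size $A_{K-1}=\sum_{k=0}^{K-1}\alpha_k=\rho\sum_{j=1}^{K}1/j\ge\rho\ln(K+1)$.

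The genuine obstacle is that the coefficient $(\alpha_k/2-L\alpha_k^2)$ on the left of \eqref{eq:th2} is no longer uniformly positive: for the first few iterates $\alpha_k=\rho/(k+1)$ can easily exceed $1/(2L)$, so the trick used in Corollary~\ref{cor:fixa_fixg} of collapsing it into $\alpha_k/4$ fails. My fix is to use the deterministic bound $\|d_k\|_2\le C_f$ from \eqref{eq:dk_Pound} to transfer $\sum_{k=0}^{K-1} L\alpha_k^2\mathbb{E}\|d_k\|_2^2$ to the right-hand side, controlling it by a constant multiple of $\sum_{k=0}^{K-1}\alpha_k^2\le\rho^2\pi^2/6$. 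After this rearrangement, multiplying by $2$ and dividing by $A_{K-1}$ produces the weighted-average form $A_{K-1}^{-1}\sum\alpha_k\mathbb{E}\|d_k\|_2^2$ that appears in the statement.

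For part~(a) with $S=(K+1)^2$ we have $M_1/\sqrt S=M_1/(K+1)$, so the cross term $\sum\alpha_k C_f\sqrt{M_1/\sqrt S}$ evaluates to $\rho C_f M_1^{1/2}H_K/\sqrt{K+1}=O(\ln K/\sqrt K)$ and the second cross term $\sum L\alpha_k^2(M_1/\sqrt S)$ is only $O(1/K)$. Hence after division by $A_{K-1}\ge\rho\ln(K+1)$ the only contributions of leading order $1/\ln(K+1)$ come from the absorbed term $2L\sum\alpha_k^2\|d_k\|_2^2$ and from the initial-value constant $2M_f$, while the cross terms decay strictly faster in $K$; taking the larger of the two surviving coefficients produces \eqref{cor:altera_fixg_K2}.

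For part~(b) with $S=(k+1)^2$ the factor $M_1/\sqrt S$ becomes $M_1/(k+1)$, so the two cross sums turn into $2\rho C_f M_1^{1/2}\sum_{k\ge 0}(k+1)^{-3/2}\le2\rho C_f M_1^{1/2}\zeta(3/2)$ and $2L\rho^2 M_1\sum_{k\ge 0}(k+1)^{-3}\le2L\rho^2 M_1\zeta(3)$, both finite. Every numerator term is therefore $K$-independent, so a single division by $\rho\ln(K+1)$ produces the four candidate leading constants $LmC_f^2\rho\pi^2/3$, $2\zeta(3/2)C_f M_1^{1/2}$, $2\zeta(3)L\rho M_1$, and $2M_f/\rho$ as in \eqref{eq:cor:altera_fixg_k2}. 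The main subtlety throughout is the coefficient-sign issue above; once it is handled via $\|d_k\|_2\le C_f$, the rest is a routine calculation with standard harmonic and Riemann-zeta partial sums.
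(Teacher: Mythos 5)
Your proposal is correct and follows essentially the same route as the paper: multiply the bound of Theorem \ref{th:dk2} by two, absorb the $2L\alpha_k^2\mathbb{E}\|d_k\|_2^2$ term into the right-hand side via the uniform bound on $\|d_k\|_2$ together with $\sum_k \alpha_k^2\le \rho^2\pi^2/6$, insert the fixed-step inner estimate $M_1/\sqrt{S}$ from \eqref{eq:cor1_dks}, and divide by $A_{K-1}\ge\rho\ln(K+1)$, estimating the cross terms with harmonic sums in case (a) and with $\zeta(3/2)$, $\zeta(3)$ in case (b). The only cosmetic difference is that you bound $\mathbb{E}\|d_k\|_2^2$ by $C_f^2$ rather than the paper's looser $mC_f^2$, which still yields the stated constants.
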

\begin{proof}
Recalling from (\ref{eq:th2}) we have
\begin{align*}
\sum\limits_{k = 0}^{K - 1} {{\alpha _k}\mathbb{E}\left\| {{d_k}} \right\|_2^2} & \le \sum\limits_{k = 0}^{K - 1} {2L\alpha _k^2\mathbb{E}\left\| {{d_k}} \right\|_2^2}  + \sum\limits_{k = 0}^{K - 1} {2{\alpha _k}{C_f}{M_1}^{\frac{1}{2}}{S^{ - \frac{1}{4}}}}  + \sum\limits_{k = 0}^{K - 1} {2L\alpha _k^2{M_1}{S^{ - \frac{1}{2}}}}  + 2{M_f}\\
& \le 2LmC_f^2\sum\limits_{k = 0}^{K - 1} {\alpha _k^2}  + 2{C_f}{M_1}^{\frac{1}{2}}\sum\limits_{k = 0}^{K - 1} {{\alpha _k}{S^{ - \frac{1}{4}}}}  + 2L{M_1}\sum\limits_{k = 0}^{K - 1} {\alpha _k^2{S^{ - \frac{1}{2}}}}  + 2{M_f},
 \end{align*}
where the last inequality is due to the bounds on ${{\left\| {{d_k}} \right\|_2^2}}$.
Then, plugging in the chosen ${{\alpha _k} = \frac{\rho }{{k + 1}}}$ and dividing both sides by ${{A_{K - 1}} = \sum\limits_{k = 0}^{K - 1} {{\alpha _k}}  }$, we have

\begin{equation}
\label{eq:cor3_Aksum}
\begin{aligned}
\frac{1}{{{A_{K - 1}}}}\sum\limits_{k = 0}^{K - 1} {{\alpha _k}\mathbb{E}\left\| {{d_k}} \right\|_2^2}  \le \frac{{2LmC_f^2{\rho ^2}}}{{{A_{K - 1}}}}\sum\limits_{k = 0}^{K - 1} {\frac{1}{{{{\left( {k + 1} \right)}^2}}}}  + \frac{{2{C_f}\rho }}{{{A_{K-1}}}}{M_1}^{\frac{1}{2}}\sum\limits_{k = 0}^{K - 1} {\frac{{{S^{ - \frac{1}{4}}}}}{{k + 1}}}  + \frac{{2L{\rho ^2}}}{{{A_{K - 1}}}}{M_1}\sum\limits_{k = 0}^{K - 1} {\frac{{{S^{ - \frac{1}{2}}}}}{{{{\left( {k + 1} \right)}^2}}}}  + \frac{{2{M_f}}}{{{A_{K - 1}}}}.
\end{aligned}
\end{equation}

(a) Setting ${{S} = (K+1)^2}$, substitution of ${\frac{1}{{{A_{K - 1}}}} = {\left( {\sum\limits_{k = 1}^K {\frac{\rho }{k}} } \right)^{ - 1}}}$ into the right-hand side of inequality (\ref{eq:cor3_Aksum}) leads to the following
\begin{align*}
\frac{1}{{{A_{K - 1}}}}\sum\limits_{k = 0}^{K - 1} {{\alpha _k}\mathbb{E}\left\| {{d_k}} \right\|_2^2} & \le 2LmC_f^2{\rho ^2}{\left( {\sum\limits_{k = 1}^K {\frac{\rho }{k}} } \right)^{ - 1}}\sum\limits_{k = 1}^K {\frac{1}{{{k^2}}}}  + {M_1}^{\frac{1}{2}}2{C_f}\rho {\left( {\sum\limits_{k = 1}^K {\frac{\rho }{k}} } \right)^{ - 1}}\frac{1}{{\sqrt {K + 1} }}\sum\limits_{k = 1}^K {\frac{1}{k}} \\
& + 2L{\rho ^2}{M_1}{\left( {\sum\limits_{k = 1}^K {\frac{\rho }{k}} } \right)^{ - 1}}\frac{1}{{K + 1}}\sum\limits_{k = 1}^K {\frac{1}{{{k^2}}}}  + {\left( {\sum\limits_{k = 1}^K {\frac{\rho }{k}} } \right)^{ - 1}}2{M_f}\\
& \le \frac{{LmC_f^2\rho {\pi ^2}}}{{3\ln \left( {K + 1} \right)}} + {M_1}^{\frac{1}{2}}\frac{{2{C_f}}}{{\sqrt {K + 1} }} + \frac{{L\rho {M_1}{\pi ^2}}}{{3\left( {K + 1} \right)\ln \left( {K + 1} \right)}} + \frac{{2{M_f}}}{{\rho \ln (K + 1)}}\\
& \le {\rm{O}}\left( {\frac{{\max \left\{ {LmC_f^2\rho {\pi ^2}{3^{ - 1}},2{M_f}{\rho ^{ - 1}}} \right\}}}{{\ln \left( {K + 1} \right)}}} \right),
\end{align*}
where the second inequality is derived from the given conditions  ${\sum\limits_{k = 1}^K {\frac{1}{{{k^2}}}}  \le \sum\limits_{k = 1}^\infty  {\frac{1}{{{k^2}}}}  = \frac{{{\pi ^2}}}{6}}$ and ${\sum\limits_{k = 1}^K {\frac{1}{k} \ge \ln \left( {K + 1} \right)} }$.

(b) Similarly, by setting ${{S} = {\left( {k + 1} \right)^2}}$ we have
\begin{align*}
\frac{1}{{{A_{K - 1}}}}\sum\limits_{k = 0}^{K - 1} {{\alpha _k}\mathbb{E}\left\| {{d_k}} \right\|_2^2} & \le 2LmC_f^2{\rho ^2}{\left( {\sum\limits_{k = 1}^K {\frac{\rho }{k}} } \right)^{ - 1}}\sum\limits_{k = 1}^K {\frac{1}{{{k^2}}}}  + {M_1}^{\frac{1}{2}}2{C_f}\rho {\left( {\sum\limits_{k = 1}^K {\frac{\rho }{k}} } \right)^{ - 1}}\sum\limits_{k = 1}^K {\frac{1}{{k\sqrt k }}} \\
& + 2L{\rho ^2}{M_1}{\left( {\sum\limits_{k = 1}^K {\frac{\rho }{k}} } \right)^{ - 1}}\sum\limits_{k = 1}^K {\frac{1}{{{k^3}}}}  + {\left( {\sum\limits_{k = 1}^K {\frac{\rho }{k}} } \right)^{ - 1}}2{M_f}\\
&\le \frac{{LmC_f^2\rho {\pi ^2}}}{{3\ln \left( {K + 1} \right)}} + \frac{{2\zeta(\frac{3}{2}){M_1}^{\frac{1}{2}}{C_f}}}{{\ln \left( {K + 1} \right)}} + \frac{{2\zeta(3)L\rho {M_1}}}{{\ln \left( {K + 1} \right)}} + \frac{{2{M_f}}}{{\rho \ln (K + 1)}}\\
&\le {\rm{O}}\left( {\max \left\{ {LmC_f^2\rho {\pi ^2}{3^{ - 1}},2\zeta(\frac{3}{2}){C_f}{M_1}^{\frac{1}{2}},2\zeta(3)L\rho {M_1},2{M_f}{\rho ^{ - 1}}} \right\}\frac{1}{{\ln \left( {K + 1} \right)}}} \right),
\end{align*}
where there exist finite constants $\zeta(\frac{3}{2})$, ${\zeta (3) \in \mathbb{R}}$ based on the Riemann zeta function \cite{ivic2012riemann} such that ${\sum\limits_{k = 1}^\infty  {\frac{1}{{k\sqrt k }}}  = \zeta (\frac{3}{2})}$ and ${\sum\limits_{k = 1}^\infty  {\frac{1}{{{k^3}}}}  = \zeta (3)}$, thus the second inequality holds. The proof is completed.
\end{proof}
\begin{corollary}
\label{cor:altera_alterg}
Set varying inner step size ${{\gamma_s} = \frac{\theta}{{{M_*}\sqrt s }}}$, $\theta>0$ and varying outer step size ${{\alpha _k} = \frac{\rho }{{k + 1}}}$ in Theorems \ref{th1} and \ref{th:dk2}. Let (\ref{eq:m*}) and  (\ref{eq:M2}) satisfy, we have

(a)  If we choose ${{S} = {\left( {K + 1} \right)^2}}$, the iteration ${x_k}$ generated from the proposed MSMD method satisfies
\begin{align}\label{cor:altera_alterg_K2}
\frac{1}{{{A_{K - 1}}}}\sum\limits_{k = 0}^{K - 1} {{\alpha _k}\mathbb{E}\left\| {{d_k}} \right\|_2^2}  \le {\rm{O}}\left( {\frac{{\max \left\{ {LmC_f^2\rho {\pi ^2}{3^{ - 1}},2{M_f}{\rho ^{ - 1}}} \right\}}}{{\ln \left( {K + 1} \right)}}} \right).
\end{align}
(b)  If we choose ${{S} = {(k+1)^2}}$, there exist finite constants $\zeta(\frac{3}{2})$, ${\zeta (3) \in \mathbb{R}}$ such that the iteration ${x_k}$ generated from the proposed MSMD method satisfies
\begin{align}\label{eq:cor:altera_alterg_k2}
\frac{1}{{{A_{K - 1}}}}\sum\limits_{k = 0}^{K - 1} {{\alpha _k}\mathbb{E}\left\| {{d_k}} \right\|_2^2}  \le {\rm{O}}\left( {\max \left\{ {LmC_f^2\rho {\pi ^2}{3^{ - 1}},2\zeta(\frac{3}{2}){C_f}{M_2}^{\frac{1}{2}},2\zeta(3)L\rho {M_2},2{M_f}{\rho ^{ - 1}}} \right\}\frac{1}{{\ln \left( {K + 1} \right)}}} \right).
\end{align}
\end{corollary}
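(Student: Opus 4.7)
The plan is to mirror the proof of Corollary \ref{cor:altera_fixg} almost verbatim, with the only change being that the inner-step bound on $\mathbb{E}\|d_k - d_{k,P}^S\|_2^2$ now comes from the varying-$\gamma_s$ estimate (\ref{eq:cor2_dks}) rather than the fixed-$\gamma_s$ estimate (\ref{eq:cor1_dks}). Concretely, under $\gamma_s = \theta/(M_*\sqrt{s})$ the derivation already carried out in Corollary \ref{cor:fixa_alterg} yields $\mathbb{E}\|d_k - d_{k,P}^S\|_2^2 \le M_2/\sqrt{S}$ with $M_2$ as in (\ref{eq:M2}), and this is the only place where $\gamma_s$ enters the subsequent outer-loop analysis.

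With that substitution in hand, I would start from (\ref{eq:th2}), absorb the $L\alpha_k^2 \|d_k\|_2^2$ terms using $\|d_k\|_2^2 \le mC_f^2$ from (\ref{eq:bound_marx_F}) and (\ref{eq:dk_Pound}), plug the bound $\mathbb{E}\|d_k - d_{k,P}^S\|_2^2 \le M_2/\sqrt{S}$ into the right-hand side, and substitute $\alpha_k = \rho/(k+1)$. Dividing both sides by $A_{K-1} = \sum_{k=0}^{K-1}\alpha_k$ produces exactly the same expression as (\ref{eq:cor3_Aksum}) but with $M_1$ replaced by $M_2$. From there the calculation splits into the two cases in precisely the same way as Corollary \ref{cor:altera_fixg}.

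For part (a), set $S = (K+1)^2$ so that $S^{-1/4} = (K+1)^{-1/2}$ and $S^{-1/2} = (K+1)^{-1}$; using $\sum_{k=1}^{K} 1/k^2 \le \pi^2/6$ and $\sum_{k=1}^{K} 1/k \ge \ln(K+1)$, every term except the $LmC_f^2\rho\pi^2/3$ and $2M_f/\rho$ contributions decays faster than $1/\ln(K+1)$, yielding (\ref{cor:altera_alterg_K2}). For part (b), set $S=(k+1)^2$ so the $k$-dependent factors become $k^{-3/2}$ and $k^{-3}$; bounding $\sum_{k=1}^{\infty} k^{-3/2} = \zeta(3/2)$ and $\sum_{k=1}^{\infty} k^{-3} = \zeta(3)$ gives the four competing terms in (\ref{eq:cor:altera_alterg_k2}), all of order $1/\ln(K+1)$.

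No new obstacle arises beyond what was handled in the previous three corollaries: the variance bound on $\nabla\phi_z$, the strong convexity of $V$, and the saddle-point inequality from Theorem \ref{th1} are already in place, and the outer-loop bookkeeping is identical to Corollary \ref{cor:altera_fixg}. The only care needed is to keep $M_2$ (not $M_1$) inside the constants and to verify that the crossover from $\sum 1/k^2$ (the $L\alpha_k^2\|d_k\|_2^2$ term) to $\sum k^{-3}$ (the $L\alpha_k^2 M_2/\sqrt{S}$ term with $S=(k+1)^2$) is accounted for separately, so that the $LmC_f^2\rho\pi^2/3$ contribution coming from the uniform $\|d_k\|_2^2 \le mC_f^2$ bound is retained in the maximum.
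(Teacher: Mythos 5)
Your proposal is correct and matches the paper's intent exactly: the paper itself proves this corollary by noting it follows analogously from Corollaries \ref{cor:fixa_alterg} and \ref{cor:altera_fixg}, i.e., by inserting the varying-$\gamma_s$ bound $\mathbb{E}\|d_k - d_{k,P}^S\|_2^2 \le M_2/\sqrt{S}$ from (\ref{eq:cor2_dks}) into the varying-$\alpha_k$ outer-loop argument of (\ref{eq:cor3_Aksum}) with $M_1$ replaced by $M_2$, which is precisely what you do. Your handling of the two choices of $S$, the $\pi^2/6$ and $\ln(K+1)$ bounds, and the Riemann zeta constants in part (b) reproduces the paper's reasoning faithfully.
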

\begin{proof}
The proof of this corollary follows analogously from the proofs of Corollary \ref{cor:fixa_alterg} and Corollary \ref{cor:altera_fixg}, and thus we do not repeat the detailed steps here.
\end{proof}

Corollaries \ref{cor:fixa_fixg}-\ref{cor:altera_alterg} present the convergence rates for our MSMD method under both fixed and varying inner and outer step sizes. The inner step size primarily influences the convergence of ${\mathbb{E}\left\| {{d_k} -  d_{k,P}^S} \right\|_2^2}$, and different settings for the inner step size affect the parameters $M_1$ and ${M_2}$ in the convergence results.  Specifically, a fixed $\gamma_s$ corresponds to $M_1$, while a varying ${\gamma_s}$ corresponds to $M_2$. The outer step size ${\alpha_k}$ determines the overall convergence rate. A fixed ${{\alpha _k} = \alpha  \le 1/4L}$ yields a convergence rate of ${{\rm O}\left( {1/\sqrt K } \right)}$ , while a varying $\alpha_k$ results in a slower rate of ${{\rm O}\left( {1/\ln \left( {K + 1} \right)} \right)}$. This difference stems from the fact that the objective function in SMOO is not upper-bounded. For bounded SMOO problems, applying a moving average technique, similar to that used for $d_{k,s}$, enables the method to achieve a convergence rate of ${{\rm O}\left( {1/\sqrt K } \right)}$.


\section{MOO Problems with Preferences}
\label{sec:preferences}
In some situations, the goal of MOO is not only to find a common descent direction, but also to optimize a specific objective, which is usually a linear combination of the objective functions ${{f_0}\left( {{x_k}} \right) = \sum\limits_{i = 1}^m {{w_i}{f_i}\left( {{x_k}} \right)} }$ for some given preference vector $w \in W \subset {\mathbb{R}^m}$, we denote such problem as a MOO problem with preferences.
In this case, Xiao et al. \cite{xiao2024direction} proposed the following subproblem
\begin{align}\label{eq:pre_sub}
{d_k} = \mathop {\arg \min }\limits_{d \in {\mathbb{R}^n}} \mathop {\max }\limits_{i \in \left[ m \right]} \left\langle {\nabla {f_i}\left( {{x_k}} \right) + \mu {f_0}\left( {{x_k}} \right),d} \right\rangle  + \frac{1}{2}\left\| {{d}} \right\|_2^2.
\end{align}
We can easily express the dual problem of (\ref{eq:pre_sub}) with stochastic vector ${\xi  \in \Xi  \subset {\mathbb{R}^n}}$ as follows:
\begin{align}\label{eq:pre_dk}
\left( {{d_k},{\lambda _k}} \right) = \mathop {\arg \min }\limits_{d \in {\mathbb{R}^n}} \mathop {\max }\limits_{\lambda  \in {\Delta ^m}} \left\{ {\mathbb{E}\left[ {\varphi \left( {d,\lambda ;\xi } \right)} \right] = \left\langle {\mathbb{E}\left[ {\nabla F\left( {{x_k};\xi } \right)\lambda  + \mu {g_0}\left( {{x_k};\xi } \right)} \right],d} \right\rangle  + \frac{1}{2}\left\| d \right\|_2^2} \right\},
\end{align}
where ${\mu}$ is the parameter and ${{g_0}\left( {{x_k};\xi } \right) = \sum\limits_{i = 1}^m {{w_i}\nabla {f_i}\left( {{x_k};\xi } \right)} }$ represents the weighted gradient of this preference vector. Assuming an unbiased gradient, we obtain
\[{g_0}\left( {{x_k}} \right) = \mathbb{E}\left[ {{g_0}\left( {{x_k};\xi } \right)} \right] = \mathbb{E}\left[ {\sum\limits_{i = 1}^m {{w_i}\nabla {f_i}\left( {{x_k};\xi } \right)} } \right] = \sum\limits_{i = 1}^m {{w_i}\nabla {f_i}\left( {{x_k}} \right)}  = \nabla {f_0}\left( {{x_k}} \right).\]
To ensure that the common descent direction $d_k$ does not deviate from the objective direction ${g_0}\left( {{x_k}} \right)$, we add an inner product regularization term of ${{\mu \left\langle {{{g_0}\left( {{x_k}} \right)},d_k} \right\rangle }}$ to (\ref{eq:dk}). In the following, we present the iteration format of MSMD under this scenario.
Similar to the previous discussion, $x_k$ is updated in the direction of
\[{d_k} =  - \nabla F\left( {{x_k};\xi } \right){\lambda _k} - \mu {{g_0}\left( {{x_k}} \right)}~~\mbox{and}~~{\lambda _k} = \mathop {\arg \min }\limits_{\lambda  \in {\Delta ^m}} \left\| {\nabla F\left( {{x_k};\xi } \right)\lambda  + \mu {{g_0}\left( {{x_k}} \right)}} \right\|_2^2.\]
We present the following assumptions for the objective function and the gradient component with preference.
\begin{assumption}\label{ass_gradient_pre}
For all iterates ${k \in [K]}$, i.i.d stochastic variable ${\xi  \in \Xi }$ and preference vector $w \in W \subset {\mathbb{R}^m}$ with ${{g_0}\left( {{x_k}} \right) = \sum\limits_{i = 1}^m {{w_i}\nabla {f_i}\left( {{x_k}} \right)} }$, we have access to the stochastic preference gradient is bounded by ${{C_g}\ge 0}$, i.e.,
\[\mathbb{E}\left[ {\left\| { {g_0}({x_k},\xi )} \right\|_2^2} \right] \le C_g^2,\]
and the variance is also bounded by ${\delta_0 \ge 0}$ i.e.
\begin{align*}
\mathbb{E   }\left[ {\left\| {{g_0}\left( {{x_k};\xi } \right) - {g_0}\left( {{x_k}} \right)} \right\|_2^2} \right] \le \delta _0^2.
\end{align*}
\end{assumption}
Then, relations with Assumption \ref{ass_gradient} imply that
\begin{equation}\label{eq:bound_pre_dk}
\begin{aligned}
\left\| {{d_k}} \right\|_2^2 = \left\| {\nabla F\left( {{x_k}} \right){\lambda _k} + \mu {{g_0}\left( {{x_k}} \right)}} \right\|_2^2 \le 2\left\| {\nabla F\left( {{x_k}} \right){\lambda _k}} \right\|_2^2 + 2\left\| {\mu {{g_0}\left( {{x_k}} \right)}} \right\|_2^2 \le 2C_f^2 + 2{\mu ^2}C_g^2.
\end{aligned}
\end{equation}
Thus we have ${C_0 = \left\{ {d \in {\mathbb{R}^n}|\left\| d \right\|_2^2 \le 2C_f^2 + 2{\mu ^2}C_g^2} \right\}}$ in the problem (\ref{eq:dk_c}).
Meanwhile, we consider (\ref{eq:wlamda}) and (\ref{eq:wd}) as the distance generating function, allowing us to derive the norm definition for MSMD in Problem (\ref{eq:pre_dk}) as shown in (\ref{eq:norm}) and (\ref{eq:dual_norm}). The only difference lies in the value of ${R_d}$, which can be expressed as
\[R_d^2 = C_f^2 + {\mu ^2}C_g^2.\]
By modifying the SMDM presented in Section \ref{sec:ourmethod} at this stage,  we can obtain the iterative formulation of the MSDM method for solving subproblems with preference as follows:
\begin{align}\label{eq:pre_zks}
{z_{k,s + 1}} = {P_{{z_{k,s}}}}\left( {{\gamma_s}{\nabla _z}\varphi \left( {{z_{k,s}};{\xi _s}} \right)} \right),
\end{align}
where the gradient of the subproblem is
\[\nabla \varphi \left( {{z_{k,s}};{\xi _s}} \right) = \left[ {\begin{array}{*{20}{c}}
{{\nabla _d}\varphi \left( {{z_{k,s}};{\xi _s}} \right)}\\
{{-\nabla _\lambda }\varphi \left( {{z_{k,s}};{\xi _s}} \right)}
\end{array}} \right] = \left[ {\begin{array}{*{20}{c}}
{\nabla {F_k}\left( {{\xi _s}} \right){\lambda _{k,s}} +\mu {{{g_0}\left( {{x_k};\xi_s } \right)}}+ {d_{k,s}} }\\
{ - \nabla {F_k}\left( {{\xi _s}} \right){d_{k,s}}}
\end{array}} \right].\]
The iterative format of ${\lambda_{k,s+1}}$ is the same as in (\ref{eq:lamdaks}), and with respect to $d_{k,{s+1}}$, the following iteration format can be obtained
\begin{align}\label{eq:pre_dks}
{d_{k,s + 1}} = {\Pi _{C_0}}\left( {{d_{k,s}} - {\gamma_s}\left( {\nabla {F_k}\left( {{\xi _s}} \right){\lambda _{k,s}} + \mu {{{g_0}\left( {{x_k};{\xi _s}} \right)}} + {d_{k,s}} } \right)} \right).
\end{align}
Bringing (\ref{eq:pre_zks}) to Algorithm \ref{alg:smoo} yields the algorithmic framework for problem (\ref{eq:pre_dk}).

\begin{lemma}
\label{lemma:bound_pre}
Suppose that Assumptions \ref{ass_f} and \ref{ass_gradient} hold. We get the following upper bounds
\begin{align}\label{eq:bound_pre_gra_phi}
\mathbb{E}\left\| {\nabla {\varphi _z}\left( {{z_{k,s}};{\xi _s}} \right)} \right\|_*^2  \le 18{\left( {C_f^2 + {\mu ^2}C_g^2} \right)^2} + 4m\ln mC_f^2\left( {C_f^2 + {\mu ^2}C_g^2} \right).
\end{align}
and
\begin{align}\label{eq:bound_pre_var_phi}
\mathbb{E}\left\| {\nabla {\varphi _z}\left( {{z_{k,s}};{\xi _s}} \right) - \nabla {\phi _z}\left( {{z_{k,s}}} \right)} \right\|_*^2 \le 4\left( {C_f^2 + {\mu ^2}C_g^2} \right)\left( {{\delta ^2} + {\mu ^2}\delta _0^2 + m{\delta ^2}\ln m} \right).
\end{align}
\end{lemma}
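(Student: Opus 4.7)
The plan is to mirror the structure of Lemma \ref{lemma:bound}, treating the preference gradient $\mu g_0(x_k;\xi_s)$ as an extra additive term in the $d$--component of $\nabla\varphi_z$ and exploiting the enlarged constraint set $C_0$. Throughout, I will invoke the dual norm decomposition (\ref{eq:dual_norm}) with the updated primal radius $R_d^2 = C_f^2+\mu^2C_g^2$ and $R_\lambda^2=\ln m$, together with the $d$--iterate bound (\ref{eq:bound_pre_dk}) and Assumption \ref{ass_gradient_pre}.

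For the first bound (\ref{eq:bound_pre_gra_phi}), I would first handle the $d$--component. Writing
\[
\nabla_d\varphi(z_{k,s};\xi_s) = \nabla F_k(\xi_s)\lambda_{k,s} + \mu g_0(x_k;\xi_s) + d_{k,s},
\]
I would apply $\|a+b+c\|_2^2\le 3(\|a\|_2^2+\|b\|_2^2+\|c\|_2^2)$ and bound the three summands in expectation by $C_f^2$ (using $\lambda_{k,s}\in\Delta^m$ and Assumption \ref{ass_gradient}), $\mu^2C_g^2$ (Assumption \ref{ass_gradient_pre}), and $2C_f^2+2\mu^2C_g^2$ (from (\ref{eq:bound_pre_dk})), which collapses to $9(C_f^2+\mu^2C_g^2)$. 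Multiplying by $2R_d^2=2(C_f^2+\mu^2C_g^2)$ yields the first term $18(C_f^2+\mu^2C_g^2)^2$. For the $\lambda$--component I follow the same route as in Lemma \ref{lemma:bound}: $\|\nabla F_k(\xi_s)d_{k,s}\|_\infty^2\le \|\nabla F_k(\xi_s)\|_2^2\|d_{k,s}\|_2^2$, then use (\ref{eq:bound_marx_F}) and (\ref{eq:bound_pre_dk}) to obtain $2mC_f^2(C_f^2+\mu^2C_g^2)$ in expectation. Multiplying by $2R_\lambda^2=2\ln m$ produces the second term $4m\ln m\,C_f^2(C_f^2+\mu^2C_g^2)$.

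For the variance bound (\ref{eq:bound_pre_var_phi}), I would first observe that $\nabla_d\varphi(z_{k,s};\xi_s)-\nabla_d\varphi(z_{k,s}) = (\nabla F_k(\xi_s)-\nabla F_k)\lambda_{k,s} + \mu(g_0(x_k;\xi_s)-g_0(x_k))$ and $-\nabla_\lambda\varphi(z_{k,s};\xi_s)+\nabla_\lambda\varphi(z_{k,s}) = -(\nabla F_k(\xi_s)-\nabla F_k)d_{k,s}$, so the $d_{k,s}$ terms cancel. Then $\|a+b\|_2^2\le 2\|a\|_2^2+2\|b\|_2^2$ together with Jensen's inequality applied to $\lambda_{k,s}\in\Delta^m$, Assumption \ref{ass_gradient}, and Assumption \ref{ass_gradient_pre} bounds the $d$--component in expectation by $2\delta^2+2\mu^2\delta_0^2$. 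For the $\lambda$--component, I bound $\|(\nabla F_k(\xi_s)-\nabla F_k)d_{k,s}\|_\infty^2\le\|d_{k,s}\|_2^2\|\nabla F_k(\xi_s)-\nabla F_k\|_2^2$ and take expectations to get $2(C_f^2+\mu^2C_g^2)m\delta^2$ via (\ref{eq:bound_pre_dk}) and (\ref{eq:bound_marx_F}). Assembling these via (\ref{eq:dual_norm}) with the new $R_d^2$ and $R_\lambda^2=\ln m$ gives $4(C_f^2+\mu^2C_g^2)(\delta^2+\mu^2\delta_0^2+m\delta^2\ln m)$, matching the claim.

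The only subtlety, and the part most easily mishandled, is the bookkeeping of the scalar $R_d^2=C_f^2+\mu^2C_g^2$ through the dual-norm formula (\ref{eq:dual_norm}): each primal-component bound must be multiplied by $2R_d^2$ rather than by the pure $C_f^2$ used in Lemma \ref{lemma:bound}, and the constant $9$ coming from the three-term Cauchy inequality is what produces the factor $18$ in (\ref{eq:bound_pre_gra_phi}). Apart from this, the argument is a direct, term-by-term adaptation of Lemma \ref{lemma:bound}, with no new analytic ingredient beyond Assumption \ref{ass_gradient_pre}.
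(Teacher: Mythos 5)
Your proposal is correct and follows essentially the same route as the paper's proof: the dual-norm decomposition (\ref{eq:dual_norm}) with the updated radius $R_d^2=C_f^2+\mu^2C_g^2$, the three-term Cauchy inequality with the bound (\ref{eq:bound_pre_dk}) giving the factor $9$ (hence $18$), and, for the variance, the cancellation of $d_{k,s}$ followed by the two-term inequality together with Assumption \ref{ass_gradient_pre} and (\ref{eq:bound_marx_F}). The constants and term-by-term bookkeeping match the paper's derivation exactly.
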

\begin{proof}
Following the inequality
\[\left\| {a + b + c} \right\|_2^2 \le 3\left\| a \right\|_2^2 + 3\left\| b \right\|_2^2 + 3\left\| c \right\|_2^2,\]
we can obtain that
\begin{align*}
\mathbb{E}\left\| {\nabla {\varphi _z}\left( {{z_{k,s}};{\xi _s}} \right)} \right\|_*^2 &\le 2\left( {C_f^2 + {\mu ^2}C_g^2} \right)\mathbb{E}\left\| {{\nabla _d}\varphi \left( {{z_{k,s}};{\xi _s}} \right)} \right\|_2^2 + 2\ln m\mathbb{E}\left\| {{\nabla _\lambda }\varphi \left( {{z_{k,s}};{\xi _s}} \right)} \right\|_2^2\\
& \le 2\left( {C_f^2 + {\mu ^2}C_g^2} \right)\mathbb{E}\left\| {\nabla {F_k}\left( {{\xi _s}} \right){\lambda _{k,s}} + \mu {g_0}\left( {{x_k};{\xi _s}} \right) + {d_{k,s}}} \right\|_2^2 + 2\ln m\mathbb{E}\left\| {\nabla {F_k}\left( {{\xi _s}} \right){d_{k,s}}} \right\|_\infty ^2\\
& \le 18{\left( {C_f^2 + {\mu ^2}C_g^2} \right)^2} + 4m\ln mC_f^2\left( {C_f^2 + {\mu ^2}C_g^2} \right).
\end{align*}
For the second inequality, we can write
\begin{align*}
&\mathbb{E}\left\| {\nabla {\varphi _z}\left( {{z_{k,s}};{\xi _s}} \right) - \nabla {\varphi _z}\left( {{z_{k,s}}} \right)} \right\|_*^2 = 2\left( {{m^2}C_f^2 + m{\mu ^2}C_g^2} \right)\mathbb{E}\left\| {\nabla {\varphi _d}\left( {{z_{k,s}};{\xi _s}} \right) - \nabla {\varphi _d}\left( {{z_{k,s}}} \right)} \right\|_2^2 \\
& + 2\ln{m}\mathbb{E}\left\| {\nabla {\varphi _\lambda }\left( {{z_{k,s}};{\xi _s}} \right) - \nabla {\varphi _\lambda }\left( {{z_{k,s}}} \right)} \right\|_\infty ^2\\
& \le 2\left( {C_f^2 + {\mu ^2}C_g^2} \right)\mathbb{E}\left\| {\nabla {F_k}\left( {{\xi _s}} \right){\lambda _{k,s}} + \mu {g_0}\left( {{x_k};{\xi _s}} \right) + {d_{k,s}} - \nabla {F_k}{\lambda _{k,s}} - \mu {g_0}\left( {{x_k}} \right) - {d_{k,s}}} \right\|_2^2\\
& + 2\ln m\mathbb{E}\left\| {\nabla {F_k}\left( {{\xi _s}} \right){d_{k,s}} - \nabla {F_k}{d_{k,s}}} \right\|_\infty ^2\\
&\le 4\left( {C_f^2 + {\mu ^2}C_g^2} \right)\mathbb{E}\left\| {\nabla {F_k}\left( {{\xi _s}} \right){\lambda _{k,s}} - \nabla {F_k}{\lambda _{k,s}}} \right\|_2^2 + 4\left( {C_f^2 + {\mu ^2}C_g^2} \right)\mathbb{E}\left\| {\mu {g_0}\left( {{x_k};{\xi _s}} \right) - \mu {g_0}\left( {{x_k}} \right)} \right\|_2^2\\
& + 2\ln m\mathbb{E}\left\| {\nabla {F_k}\left( {{\xi _s}} \right){d_{k,s}} - \nabla {F_k}{d_{k,s}}} \right\|_\infty ^2\\
&\le 4\left( {C_f^2 + {\mu ^2}C_g^2} \right){\delta ^2} + 4\left( {C_f^2 + {\mu ^2}C_g^2} \right){\mu ^2}\delta _0^2 + 4m{\delta ^2}\ln m\left( {C_f^2 + {\mu ^2}C_g^2} \right)\\
&= 4\left( {C_f^2 + {\mu ^2}C_g^2} \right)\left( {{\delta ^2} + {\mu ^2}\delta _0^2 + m{\delta ^2}\ln m} \right),
\end{align*}
completes the proof.
\end{proof}
Both upper bounds obtained in Lemma \ref{lemma:bound_pre} are dependent on the preference vectors and grow as the norm and variance of the preference gradient increase. Following the analysis in Theorems \ref{th1}, \ref{th:dk2}, and Corollary \ref{cor:fixa_fixg}, by selecting fixed inner and outer step sizes, we can derive the convergence rate results for the MSMD method with preferences.
\begin{theorem}\label{th3}
Suppose that Assumptions \ref{ass_f}, \ref{ass_gradient} and \ref{ass_gradient_pre} hold. Let ${{S} = {K^2}}$, ${\alpha  \le \frac{1}{{4L}}}$, ${{\gamma_s} = \frac{\theta}{{{M_*}\sqrt S }}}$, ${P = \left\lceil {rS} \right\rceil }$ and ${r \in \left( {0,1} \right)}$. Set $M_0$ be a given constant according to the following:
\begin{align*}
{M_0} = \left( {\frac{{2\bar D_{{\omega _z},Z}^2S}}{{S - \left\lceil {rS} \right\rceil  + 1}} + 2} \right)\max \left\{ {\theta ,{\theta ^{ - 1}}} \right\}{M_{0,*}},
\end{align*}
where
\begin{align} \label{eq:m*2}
M_{0,*}^2 = 9{\left( {C_f^2 + {\mu ^2}C_g^2} \right)^2} + 2\left( {m\ln mC_f^2 + {\delta ^2} + {\mu ^2}\delta _0^2 + m\ln m{\delta ^2}} \right)\left( {C_f^2 + {\mu ^2}C_g^2} \right).
\end{align}
We have the iteration ${x_k}$ generated from (\ref{eq:pre_zks}) satisfies
\begin{align}\label{eq:th3}
\frac{1}{K}\sum\limits_{k = 0}^{K - 1} {\mathbb{E}\left\| {{d_k}} \right\|_2^2}  \le {\rm O}\left( {\frac{{4\sqrt {2{M_0}\left( {C_f^2 + {\mu ^2}C_g^2} \right)} }}{{\sqrt K }}} \right).
\end{align}
\end{theorem}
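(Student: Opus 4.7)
The plan is to mimic the three-stage chain Theorem~\ref{th1} $\to$ Theorem~\ref{th:dk2} $\to$ Corollary~\ref{cor:fixa_fixg}(a), substituting the preference subproblem (\ref{eq:pre_dk}) for (\ref{eq:dk_1}) and invoking Lemma~\ref{lemma:bound_pre} wherever Lemma~\ref{lemma:bound} was used. The Bregman machinery of Section~\ref{sec:ourmethod} carries over once $R_d^2 = C_f^2+\mu^2 C_g^2$ is updated to reflect the enlarged ball $C_0$ from (\ref{eq:bound_pre_dk}); the prox-mapping for $\lambda$ still admits the closed form (\ref{eq:lamdaks}), and the $d$-update becomes a projection onto $C_0$ of the slightly modified gradient in (\ref{eq:pre_dks}).

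First, I would re-run the SMD saddle-point estimate of Theorem~\ref{th1} with $\varphi$ in place of $\phi$. The only two places where gradient magnitude and variance enter the derivation (\ref{eq:th1_1})--(\ref{eq:barDin}) are the dual-norm term $\tfrac{\gamma_s^2}{2}\|\nabla\varphi_z(z_{k,s};\xi_s)\|_*^2$ and the bias-correction term involving $\nabla\varphi_z(z_{k,s};\xi_s)-\nabla\varphi_z(z_{k,s})$, now controlled by (\ref{eq:bound_pre_gra_phi}) and (\ref{eq:bound_pre_var_phi}) respectively. A short algebraic consolidation of those two bounds yields precisely the constant $M_{0,*}^2$ displayed in (\ref{eq:m*2}), so the preference analogue of (\ref{eq:th_dk-dks}) emerges as
\[
\frac{1}{2}\mathbb{E}\bigl\|d_k - d_{k,P}^S\bigr\|_2^2 \le \Bigl[\sum_{s=P}^S \gamma_s\Bigr]^{-1}\Bigl[\bar D_{\omega_z,Z}^2 + M_{0,*}^2\sum_{s=P}^S \gamma_s^2\Bigr].
\]

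Second, I would redo the descent lemma of Theorem~\ref{th:dk2}. The key observation is that $d_k$ defined by (\ref{eq:pre_dk}) is the steepest common-descent direction for the shifted objectives $h_i := f_i + \mu f_0$: the saddle-point duality used in \cite{fliege2000steepest} applied to $\varphi$ gives $\max_{i\in[m]}\langle \nabla h_i(x_k), d_k\rangle \le -\tfrac{1}{2}\|d_k\|_2^2$. Since each $h_i$ inherits Lipschitz smoothness from Assumption~\ref{ass_f} (absorbing any $(1+\mu)$ factor into $L$), replaying the chain (\ref{eq:th2_f})--(\ref{eq:th_expf}) on $h_i$ reproduces (\ref{eq:th2}) with the Cauchy--Schwarz factor $C_f$ replaced by $\sqrt{2(C_f^2+\mu^2 C_g^2)}$, the latter coming from $\max_i \mathbb{E}\|\nabla h_i(x_k)\|_2^2 \le 2(C_f^2+\mu^2 C_g^2)$ (by the $\|a+b\|_2^2\le 2\|a\|_2^2+2\|b\|_2^2$ trick together with Assumptions~\ref{ass_gradient} and \ref{ass_gradient_pre}).

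Finally, plugging $\gamma_s=\theta/(M_{0,*}\sqrt S)$ into the estimate from the first step produces $\mathbb{E}\|d_k - d_{k,P}^S\|_2^2 \le M_0/\sqrt S$ exactly as in (\ref{eq:cor1_dks}), and then with $\alpha_k = \alpha \le 1/(4L)$ and $S = K^2$, the telescoping computation (\ref{eq:cor1_f})--(\ref{eq:cor1_dk}) delivers (\ref{eq:th3}) with leading coefficient $4\sqrt{2(C_f^2+\mu^2 C_g^2)\,M_0}$. I expect the principal obstacle to be pure bookkeeping: one must verify that the algebraic combination of (\ref{eq:bound_pre_gra_phi}) and (\ref{eq:bound_pre_var_phi}), after dividing through by 2 and summing, really collapses into the single constant $M_{0,*}^2$ of (\ref{eq:m*2}), and that the Cauchy--Schwarz factor $C_f$ in the proof of Theorem~\ref{th:dk2} genuinely lifts to $\sqrt{2(C_f^2+\mu^2 C_g^2)}$ under the preference reformulation; once these constants are pinned down, the remaining calculations are purely mechanical substitutions into the framework already established.
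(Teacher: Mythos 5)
Your proposal follows essentially the same route as the paper's proof: redo the saddle-point estimate of Theorem \ref{th1} with $\varphi$ and the bounds of Lemma \ref{lemma:bound_pre} to get $\mathbb{E}\|d_k-d_{k,P}^S\|_2^2\le M_0/\sqrt{S}$, redo the descent argument of Theorem \ref{th:dk2} for the shifted objectives $f_i+\mu f_0$ using the optimality of $d_k$ in (\ref{eq:pre_dk}) and the Cauchy--Schwarz factor $\sqrt{2(C_f^2+\mu^2C_g^2)}$, then telescope with $\alpha\le 1/(4L)$ and $S=K^2$. The only differences are cosmetic bookkeeping (e.g.\ you calibrate $\gamma_s$ by $M_{0,*}$ and absorb the $\mu$-dependence into the smoothness constant, points the paper treats more loosely), so the argument is correct and matches the paper.
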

\begin{proof}
Taking the expectation of (\ref{eq:th1_1}) and plugging the bound of variance (\ref{eq:bound_pre_var_phi}) into it has
\begin{align}\label{eq:th3_2}
\mathbb{E}\left[ {\sum\limits_{s = P}^S {{\gamma _s}{{\left( {{z_k} - {v_s}} \right)}^T}\left( {\nabla {\phi _z}\left( {{z_{k,s}};{\xi _s}} \right) - \nabla {\phi _z}\left( {{z_{k,s}}} \right)} \right)} } \right] \le V\left( {{z_{k,P}},{z_k}} \right) + 2\left( {C_f^2 + {\mu ^2}C_g^2} \right)\left( {{\delta ^2} + {\mu ^2}\delta _0^2 + m\ln m{\delta ^2}} \right)\sum\limits_{s = P}^S {\gamma _s^2} .
\end{align}
By plugging (\ref{eq:th3_2}) into (\ref{eq:th1_4}) and taking expectations on both sides, we can obtain
\begin{align*}
&\mathbb{E}\left[ {\sum\limits_{s = P}^S {{\gamma _s}} {{\left( {{z_{k,s}} - {z_k}} \right)}^T}\nabla {\varphi _z}\left( {{z_{k,s}}} \right)} \right] \le V\left( {{z_{k,P}},{z_k}} \right) + \sum\limits_{s = P}^S {\frac{{\gamma _s^2}}{2}} \mathbb{E}\left\| {\nabla {\varphi _z}\left( {{z_{k,s}};{\xi _s}} \right)} \right\|_*^2\\
& - \sum\limits_{s = P}^S {{\gamma _s}} \mathbb{E}\left[ {{{\left( {{v_s} - {z_k}} \right)}^T}\left( {\nabla {\varphi _z}\left( {{z_{k,s}};{\xi _s}} \right) - \nabla {\varphi _z}\left( {{z_{k,s}}} \right)} \right)} \right]\\
& \le V\left( {{z_{k,P}},{z_k}} \right) + \sum\limits_{s = P}^S {\frac{{\gamma _s^2}}{2}} \mathbb{E}\left\| {\nabla {\varphi _z}\left( {{z_{k,s}};{\xi _s}} \right)} \right\|_*^2 + V\left( {{z_{k,P}},{z_k}} \right) + 2\left( {C_f^2 + {\mu ^2}C_g^2} \right)\left( {{\delta ^2} + {\mu ^2}\delta _0^2 + m\ln m{\delta ^2}} \right)\sum\limits_{s = P}^S {\gamma _s^2} \\
& \le 2V\left( {{z_{k,P}},{z_k}} \right) + \left( {9{{\left( {C_f^2 + {\mu ^2}C_g^2} \right)}^2} + 2m\ln mC_f^2\left( {C_f^2 + {\mu ^2}C_g^2} \right)} \right)\sum\limits_{s = P}^S {\gamma _s^2}  + 2\left( {C_f^2 + {\mu ^2}C_g^2} \right)\left( {{\delta ^2} + {\mu ^2}\delta _0^2 + m\ln m{\delta ^2}} \right)\sum\limits_{s = P}^S {\gamma _s^2} \\
& \le \bar D_{{\omega _z},Z}^2 + \underbrace {\left( {9{{\left( {C_f^2 + {\mu ^2}C_g^2} \right)}^2} + 2\left( {m\ln mC_f^2 + {\delta ^2} + {\mu ^2}\delta _0^2 + m\ln m{\delta ^2}} \right)\left( {C_f^2 + {\mu ^2}C_g^2} \right)} \right)}_{ = M_{0,*}^2}\sum\limits_{s = P}^S {\gamma _s^2} \\
& = \bar D_{{\omega _z},Z}^2 + M_{0,*}^2\sum\limits_{s = P}^S {\gamma _s^2} ,
\end{align*}
where the third inequality can be directly obtained by Lemma \ref{lemma:bound_pre}.
Follow from (\ref{eq:cor1_dks}), let ${{\gamma_s} = \frac{\theta }{{{M^*}\sqrt S }}}$ we can obtain
\begin{align}\label{eq:th3_dks}
\mathbb{E}\left\| {{{ d}_{k,P}^S} - {d_k}} \right\|_2^2 \le {\frac{{{M_0}}}{{\sqrt S }}},
\end{align}
where
\[{M_0} = \left( {\frac{{2\bar D_{{\omega _z},Z}^2S}}{{S - \left\lceil {rS} \right\rceil  + 1}} + 2} \right)\max \left\{ {\theta ,{\theta ^{ - 1}}} \right\}{M_{0,*}}.\]
Thus, by combining (\ref{eq:th3_dks}) with the bound on the gradient of ${f(\cdot)}$, we can conclude
\begin{align*}
{\left( {\mathbb{E}\left[ {\mathop {\max }\limits_{i \in \left[ m \right]} \left\langle {\nabla {f_i}\left( {{x_k}} \right) + \mu {g_0}\left( {{x_k}} \right), d_{k,P}^S - {d_k}} \right\rangle } \right]} \right)^2} & \le \mathbb{E}\left[ {\mathop {\max }\limits_{i \in \left[ m \right]} \left\| {\nabla {f_i}\left( {{x_k}} \right) + \mu {g_0}\left( {{x_k}} \right)} \right\|_2^2\left\| { d_{k,P}^S - {d_k}} \right\|_2^2} \right]\\
& \le \left( {2C_f^2 + 2{\mu ^2}C_g^2} \right)\mathbb{E}\left\| { d_{k,P}^S - {d_k}} \right\|_2^2\\
& \le 2\left( {C_f^2 + {\mu ^2}C_g^2} \right)\frac{{{M_0}}}{{\sqrt S }}.
\end{align*}
Then, similar to Theorem \ref{th:dk2}, we consider the following inequality

\begin{equation}\label{eq:pre_maxf}
\begin{aligned}
\mathop {\max }\limits_{i \in \left[ m \right]} {f_i}\left( {{x_{k + 1}}} \right) + \mu {f_0}\left( {{x_{k + 1}}} \right) &\le \mathop {\max }\limits_{i \in \left[ m \right]} \left\langle {\nabla {f_i}\left( {{x_k}} \right) + \mu {f_0}\left( {{x_k}} \right),{\alpha _k} d_{k,P}^S} \right\rangle  + \frac{L}{2}\left\| {{\alpha _k} d_{k,P}^S} \right\|_2^2 + \mathop {\max }\limits_{i \in \left[ m \right]} {f_i}\left( {{x_k}} \right) + \mu {f_0}\left( {{x_k}} \right)\\
& \le {\alpha _k}\mathop {\max }\limits_{i \in \left[ m \right]} \left\langle {\nabla {f_i}\left( {{x_k}} \right) + \mu {f_0}\left( {{x_k}} \right),{d_k}} \right\rangle  + {\alpha _k}\mathop {\max }\limits_{i \in \left[ m \right]} \left\langle {\nabla {f_i}\left( {{x_k}} \right) + \mu {f_0}\left( {{x_k}} \right), d_{k,P}^S - {d_k}} \right\rangle \\
& + \frac{L}{2}\alpha _k^2\left\| { d_{k,P}^S - {d_k} + {d_k}} \right\|_2^2 + \mathop {\max }\limits_{i \in \left[ m \right]} {f_i}\left( {{x_k}} \right) + \mu {f_0}\left( {{x_k}} \right)\\
& \le  - \frac{{{\alpha _k}}}{2}\left\| {{d_k}} \right\|_2^2 +  + {\alpha _k}\mathop {\max }\limits_{i \in \left[ m \right]} \left\langle {\nabla {f_i}\left( {{x_k}} \right) + \mu {f_0}\left( {{x_k}} \right), d_{k,P}^S - {d_k}} \right\rangle  + L\alpha _k^2\left\| { d_{k,P}^S - {d_k}} \right\|_2^2\\
& + L\alpha _k^2\left\| {{d_k}} \right\|_2^2 + \mathop {\max }\limits_{i \in \left[ m \right]} {f_i}\left( {{x_k}} \right) + \mu {f_0}\left( {{x_k}} \right),
\end{aligned}
\end{equation}

where according to (\ref{eq:pre_dk}), we can obtain
\[\mathop {\max }\limits_{i \in \left[ m \right]} \left\langle {\nabla {f_i}\left( {{x_k}} \right) + \mu {f_0}\left( {{x_k}} \right),{d_k}} \right\rangle  + \frac{1}{2}\left\| {{d_k}} \right\|_2^2 \le 0,\]
which implies that the last inequality holds. By taking expectations for both sides of (\ref{eq:pre_maxf}) and reorganizing, we obtain
\begin{align*}
\left( {\frac{{{\alpha _k}}}{2} - L\alpha _k^2} \right)\mathbb{E}\left\| {{d_k}} \right\|_2^2 &\le {\alpha _k}\mathbb{E}\left[ {\mathop {\max }\limits_{i \in \left[ m \right]} \left\langle {\nabla {f_i}\left( {{x_k}} \right) + \mu {f_0}\left( {{x_k}} \right), d_{k,P}^S - {d_k}} \right\rangle } \right] + L\alpha _k^2\mathbb{E}\left\| { d_{k,P}^S - {d_k}} \right\|_2^2\\
& + \mathbb{E}\left[ {\mathop {\max }\limits_{i \in \left[ m \right]} {f_i}\left( {{x_k}} \right) + \mu {f_0}\left( {{x_k}} \right)} \right] - \mathbb{E}\left[ {\mathop {\max }\limits_{i \in \left[ m \right]} {f_i}\left( {{x_{k + 1}}} \right) + \mu {f_0}\left( {{x_{k + 1}}} \right)} \right]\\
& \le {\alpha _k}\sqrt {2{M_0}\left( {C_f^2 + {\mu ^2}C_g^2} \right)} {S^{ - \frac{1}{4}}} + \alpha _k^2L{M_0}{S^{ - \frac{1}{2}}} + \mathbb{E}\left[ {\mathop {\max }\limits_{i \in \left[ m \right]} {f_i}\left( {{x_k}} \right) + \mu {f_0}\left( {{x_k}} \right)} \right]\\
& - \mathbb{E}\left[ {\mathop {\max }\limits_{i \in \left[ m \right]} {f_i}\left( {{x_{k + 1}}} \right) + \mu {f_0}\left( {{x_{k + 1}}} \right)} \right].
\end{align*}

Taking outer step as a fixed step ${\alpha_k = \alpha  \le \frac{1}{{4L}}}$, by telescoping the above inequality from ${k=0}$ to ${k=K-1}$ and dividing both sides by the step size $\frac{\alpha }{4}$ and the number of iterations ${K}$ simultaneously, we have
\begin{align*}
\frac{1}{K}\sum\limits_{k = 0}^{K - 1} {\mathbb{E}\left\| {{d_k}} \right\|_2^2} & \le 4\sqrt {2{M_0}\left( {C_f^2 + {\mu ^2}C_g^2} \right)} {S^{ - \frac{1}{4}}} + 4\alpha L{M_0}{S^{ - \frac{1}{2}}}\\
& + \frac{4}{{K\alpha }}\left( {\mathbb{E}\left[ {\mathop {\max }\limits_{i \in \left[ m \right]} {f_i}\left( {{x_0}} \right) + \mu {f_0}\left( {{x_0}} \right)} \right] - \mathbb{E}\left[ {\mathop {\max }\limits_{i \in \left[ m \right]} {f_i}\left( {{x_K}} \right) + \mu {f_0}\left( {{x_K}} \right)} \right]} \right)\\
& \le 4\sqrt {2{M_0}\left( {C_f^2 + {\mu ^2}C_g^2} \right)} {S^{ - \frac{1}{4}}} + 4\alpha L{M_0}{S^{ - \frac{1}{2}}} + \frac{4}{{K\alpha }}\left( {1 + \mu \sum\limits_{i = 1}^m {{w_i}} } \right)\left( {\mathop {\max }\limits_{i \in \left[ m \right]} {f_i}\left( {{x_0}} \right) - {F_{\inf }}} \right).
\end{align*}
Plugging ${{S} = {K^2}}$ and ${{M_{0,f}} = \left( {1 + \mu \sum\limits_{i = 1}^m {{w_i}} } \right)\left( {\mathop {\max }\limits_{i \in \left[ m \right]} {f_i}\left( {{x_0}} \right) - {F_{\inf }}} \right)}$ into the above inequality yields
\begin{align*}
\frac{1}{K}\sum\limits_{k = 0}^{K - 1} {\mathbb{E}\left\| {{d_k}} \right\|_2^2} & \le \frac{{4\sqrt {2{M_0}\left( {C_f^2 + {\mu ^2}C_g^2} \right)} }}{{\sqrt K }} + \frac{{4\alpha L{M_0}}}{K} + \frac{{4{M_{0,f}}}}{{K\alpha }}\\
& \le {\rm O}\left( {\frac{{4\sqrt {2{M_0}\left( {C_f^2 + {\mu ^2}C_g^2} \right)} }}{{\sqrt K }}} \right),
\end{align*}
completes the proof.
\end{proof}
For the SMOO problem with preferences, we provide convergence rate results for fixed inner and outer step sizes. Analogous proofs can be derived for other settings.
In conclusion, our MSMD achieves sublinear convergence rates for both SMOO with and without preferences, matching the results of state-of-the-art SMOO methods.

\section{Numerical experiments}
\label{sec:experment}
The numerical experiment comprises two primary components. The first is a benchmarking problem in MOO, where the objective of the optimizer is to approximate the Pareto front. The second component is an image classification task in MTL, where the optimizer is tasked with training a neural network to minimize classification loss.
All experiments were conducted on a 64-bit PC equipped with an Intel(R) i5-10600KF CPU running at 4.10GHz and having 16GB RAM along with a GeForce RTX 3060 Ti GPU.
Our method was compared against deterministic MOO methods such as MGDA \cite{sener2018multi} and PCGrad \cite{yu2020gradient},  as well as recent SMOO methods including CR-MOGM and SDMGrad. Additionally, we compared our method with traditional network optimizers such as SGD \cite{robbins1951stochastic}, Adam \cite{kingma2014adam} and root mean square prop (RMSProp) \cite{tieleman2012lecture}, which optimize the weighted sum of loss functions. For the traditional optimizer, we set the loss weighting parameter to ${\frac{1}{m}}$, where $m$ represents the number of objective functions. For all the methods, we use greedy search to select the best step size $\alpha_k$ and $\gamma_s$ in ${\{ 0.001,}$ ${0.01,}$ ${0.02,}$ ${0.03,}$ ${0.04,}$ ${0.5\}}$ and ${\{0.005,}$ ${0.01,}$ ${0.05\}}$, respectively, such that the obtained best performance. All other parameters, except for the step sizes, were left at their default values in PyTorch.

\subsection{MOO benchmark test function}
We conduct a comparative analysis of six MOO problems, including four 2-objective functions (BK1, FF1, Lov1, Lov3, Toi4) and one 3-objective function (MOP5). Each problem is initialized with box constraints $[lb,ub]$ as outlined in \cite{assunccao2021conditional}.
For all methods, the number of iteration steps is fixed at $K = 100$, and the initial number of points is set to $100$. For both SDMGrad and our MSMD method, which do not require a solver for subproblem optimization, we choose $S = 300$ steps for solving the subproblem, i.e., three times the value of $K$.


To introduce stochastic noise during the optimization process for SMOO, we sample independent variables from a normal distribution with mean 0 and standard deviation ${\rho = 0.5}$ times the range $[lb,ub]$, where ${\sigma = \rho[lb ,ub]}$ i.e. ${N(0,{\sigma ^2})}$. It is important to note that our method requires only a single sampling at each inner iteration s, while the SDMGrad method performs the sampling process three times per iteration.

\begin{figure}[h!]
\centering
\subfloat[BK1]{\includegraphics[scale=0.3]{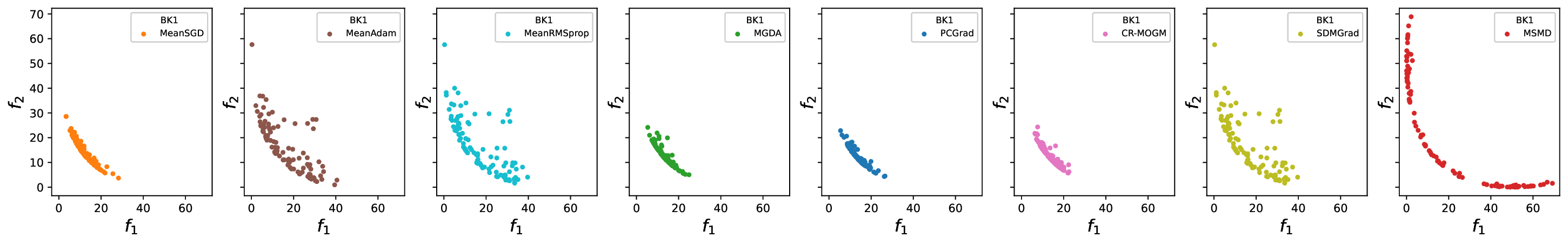}}\\
\subfloat[FF1]{\includegraphics[scale=0.3]{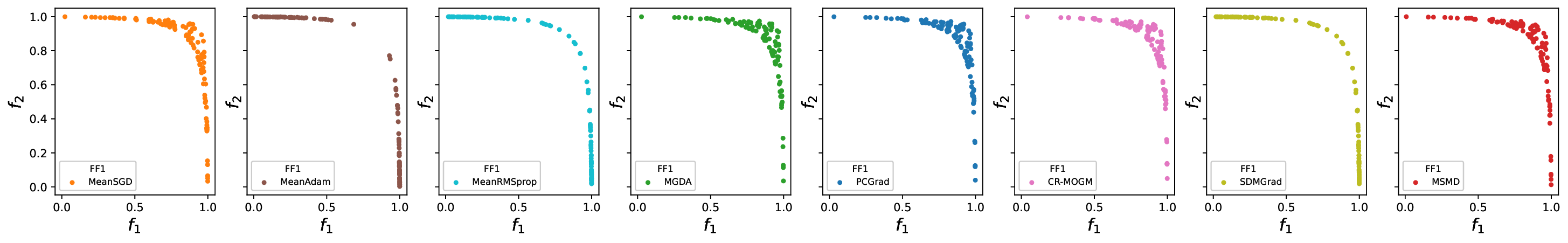}}\\
\subfloat[Lov1]{\includegraphics[scale=0.3]{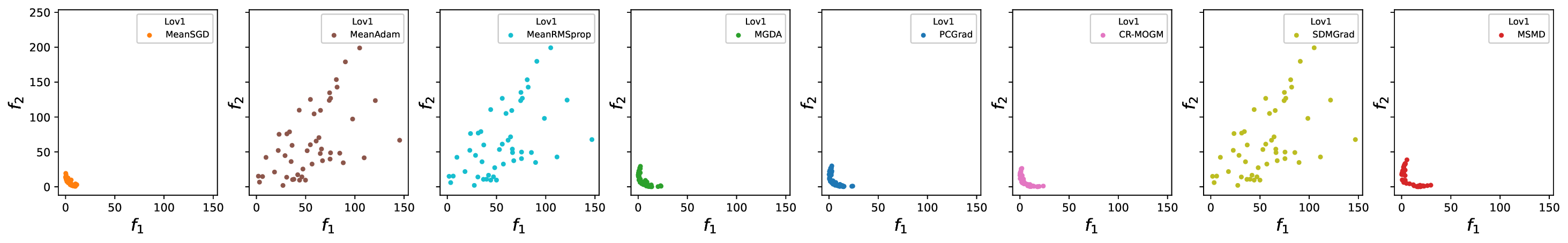}}\\
\subfloat[Lov3]{\includegraphics[scale=0.3]{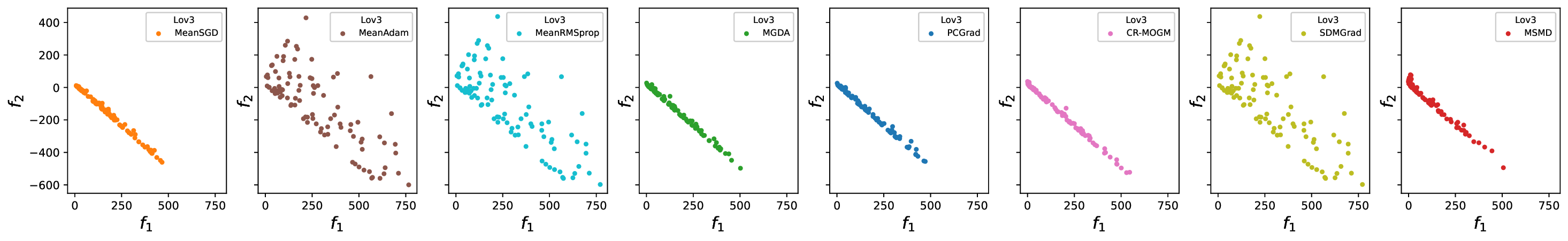}}\\
\subfloat[Toi4]{\includegraphics[scale=0.3]{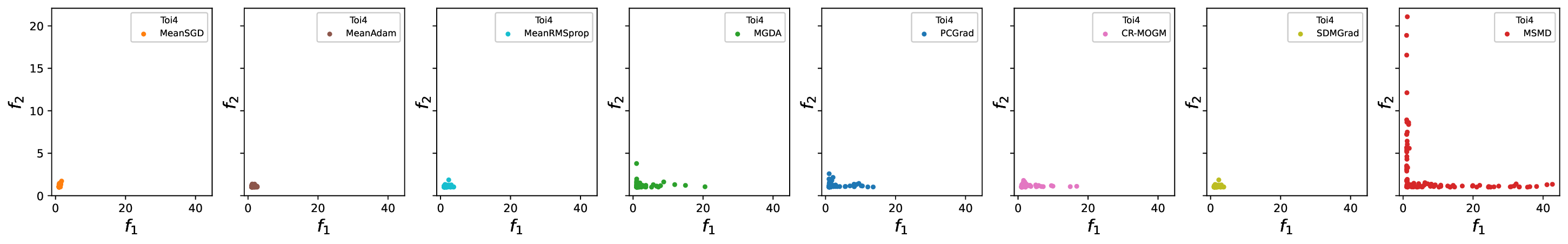}}\\

\caption{Comparison on the Pareto fronts of 2-dimensional benchmark MOO test functions.}
\label{fig:2d}
\end{figure}

\begin{figure}[h!]
\centering
\subfloat{\includegraphics[scale=0.3]{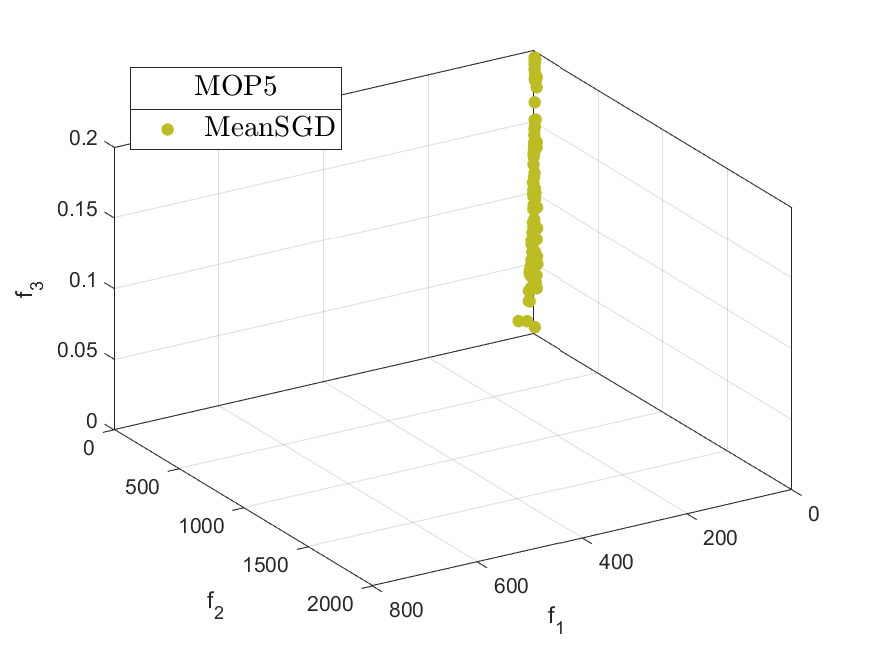}}
\subfloat{\includegraphics[scale=0.3]{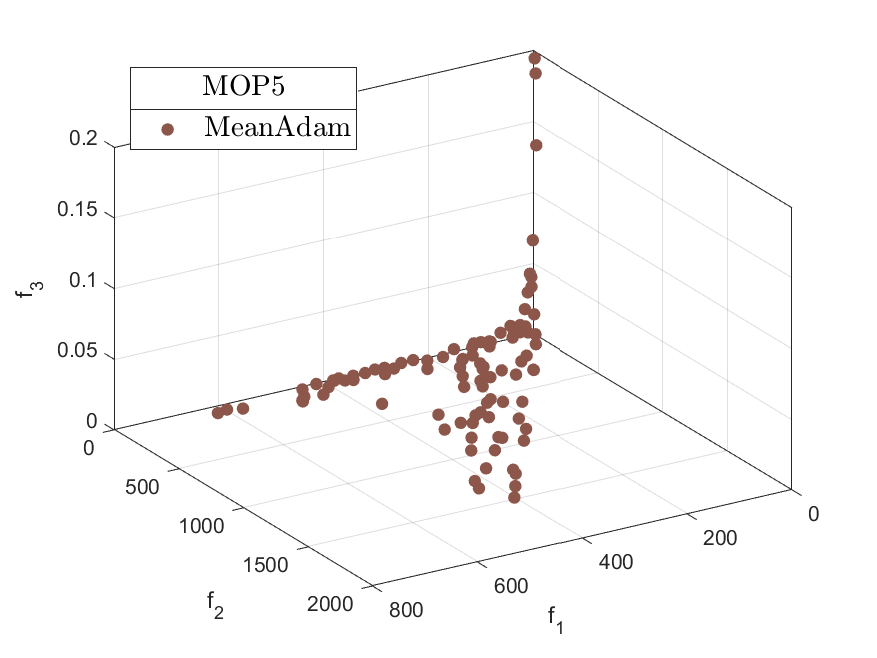}}
\subfloat{\includegraphics[scale=0.3]{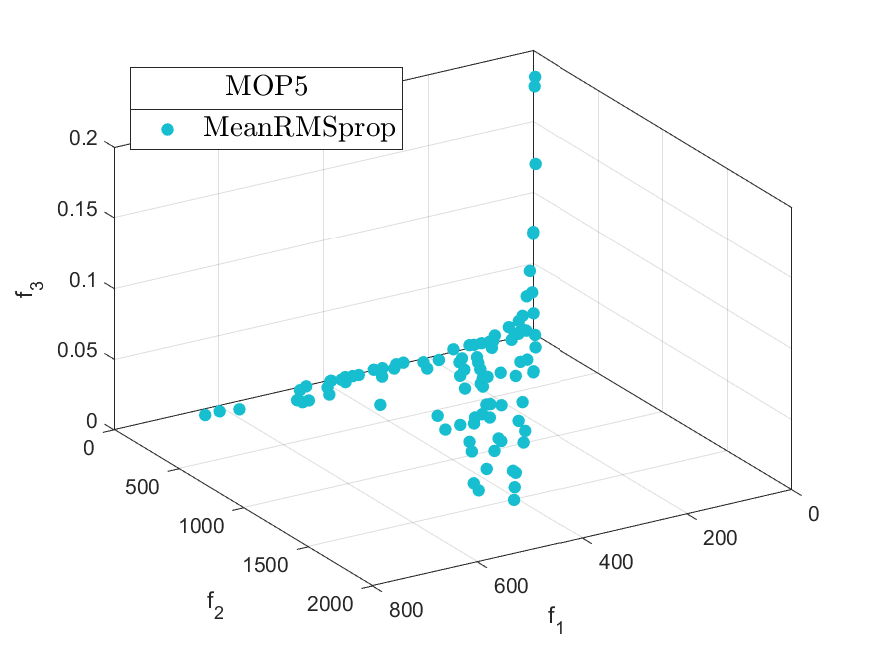}}
\subfloat{\includegraphics[scale=0.3]{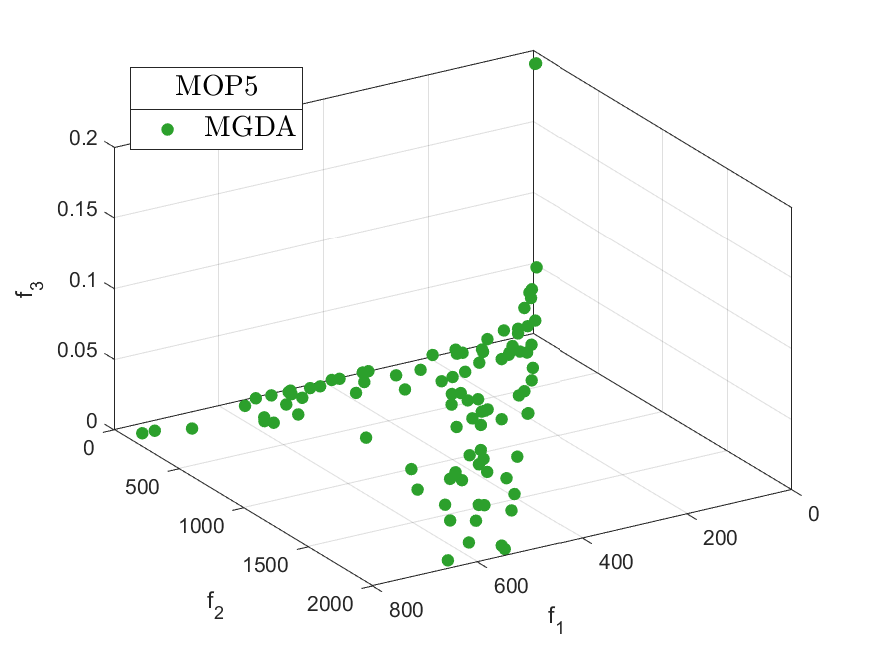}}\\
\subfloat{\includegraphics[scale=0.3]{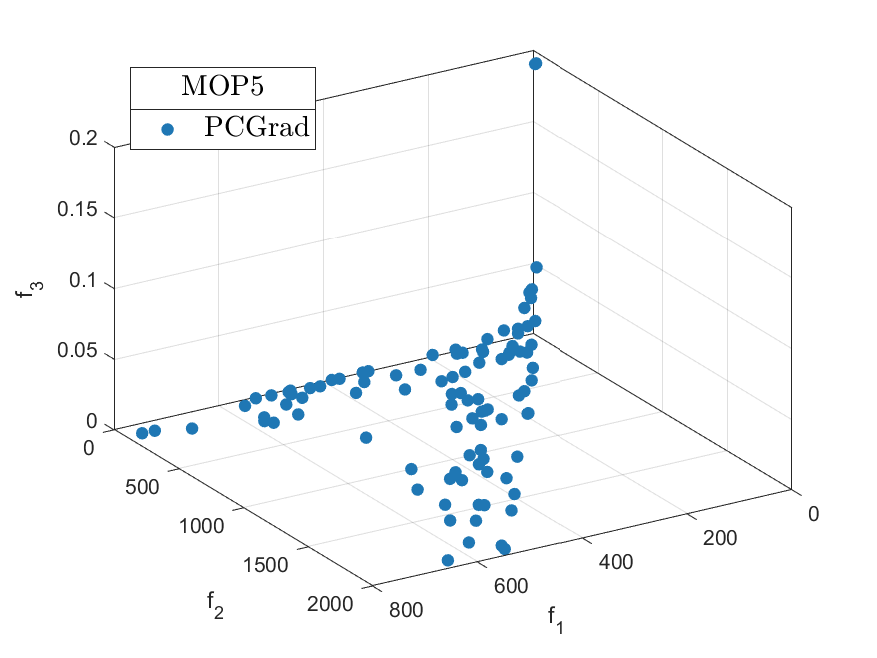}}
\subfloat{\includegraphics[scale=0.3]{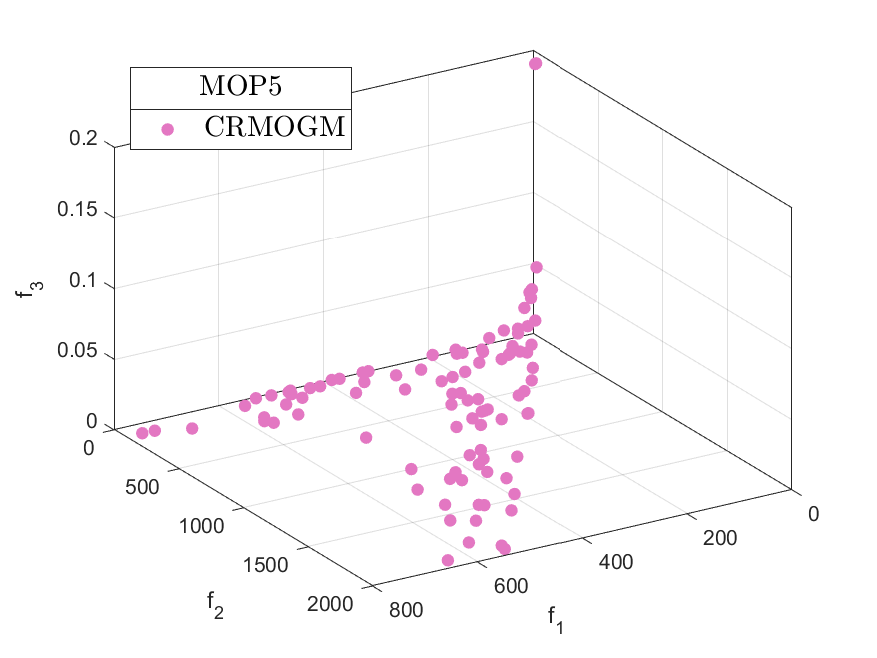}}
\subfloat{\includegraphics[scale=0.3]{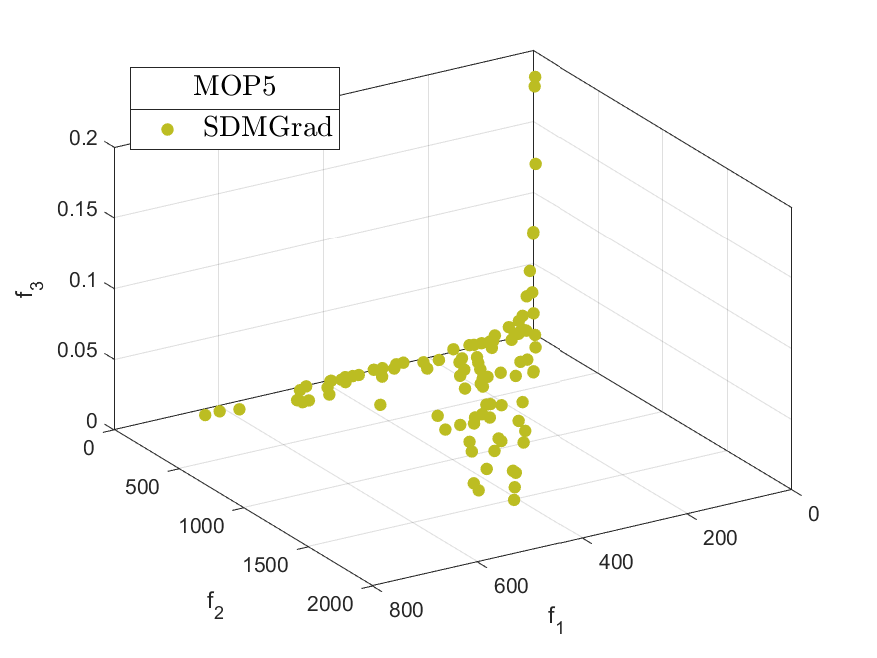}}
\subfloat{\includegraphics[scale=0.3]{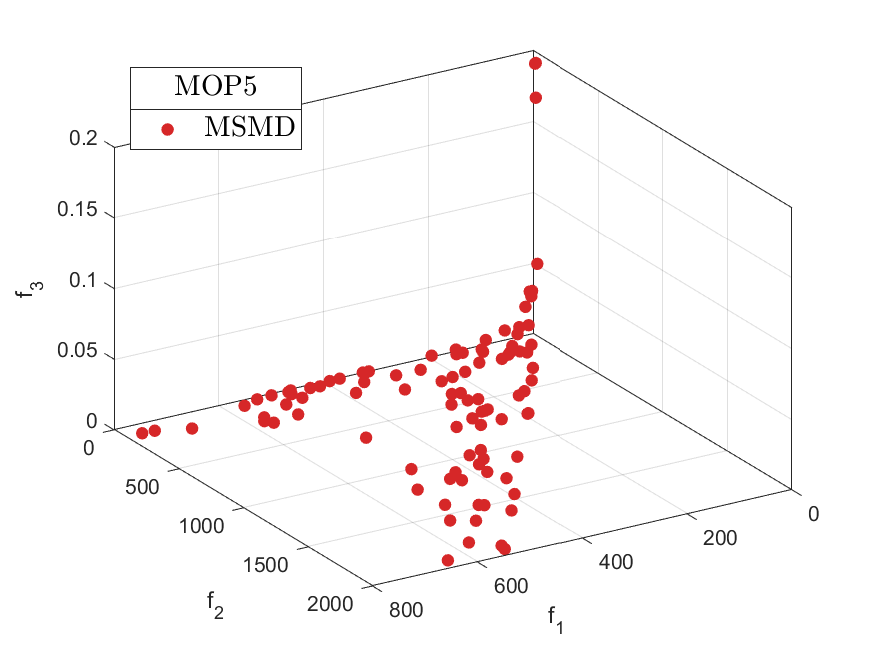}}
\caption{Comparison on the Pareto fronts of 3-dimensional benchmark MOO test function MOP5.}
\label{fig:mop5}
\end{figure}

The competitive Pareto fronts obtained by our method across all five 2-objective test functions, as shown in Figure \ref{fig:2d}, consistently outperform those of the seven comparison methods. Notably, on the BK1 problem, which involves convex quadratic functions for both objectives, our method produces the most comprehensive fronts and demonstrates remarkable stability against stochastic noise.
In the BK1 problem, the Pareto front obtained by the weighted sum method using SGD is concentrated primarily in the lower-left corner. In contrast, both Adam and RMSProp are significantly affected by the introduction of stochastic noise, likely due to their reliance on historical step information. This reliance results in accumulated stochastic noise, causing oscillations in the front surface.
The multi-gradient methods, which manipulate gradients, also produce Pareto fronts concentrated around solutions with smaller objective values. However, these methods fail to identify larger Pareto solutions for individual objectives, which our method successfully captures. Additionally, the SDMGrad method demonstrates poor stability in this problem.
Similar trends are observed in the Lov1, Lov3, and Toi4 problems, where our method identifies more complete front surfaces, particularly in Toi4, where the results significantly surpass those of the comparison methods.
For the 3-objective test function MOP5, as illustrated in Figure \ref{fig:mop5}, our method consistently identifies favorable Pareto fronts.
In conclusion, our MSMD approach outperforms both the traditional weighted and stochastic methods, as well as stochastic multi-gradient methods, in discovering superior Pareto front surfaces across these benchmark problems

\subsection{Multi-task learning}
In this subsection, we evaluate our method by training neural networks on the MultiMNIST dataset. In the specific experimental setup, we adopt the hard parameter sharing scheme of MTL, where the parameters of the task-sharing part are derived by solving for optimal parameters using the SMOO method, and the task-specific part is updated directly through SGD.
We can formulate the loss function of MTL as the following bi-objective optimization problem
\[\mathop {\min }\limits_{x \in X} L = \left( {{L^1}\left( {x,\xi } \right),{L^2}\left( {x,\xi } \right)} \right)\]
where ${x \in X}$ is the parameter of the neural network, $\xi$ is a batch of data, and we chose ${L^1}$ and ${L^2}$ are both cross-entropy loss function.
The following elaborates on the architecture of the neural network we are about to train.
We have two convolutional neural networks (CNN) with different shared part structures and the same task-specific structure. The structure of the shared encoder of the convolutional neural network are \textit{c-c-p} (CNN1) and \textit{c-c-p-c-c-p} (CNN-2), where \textit{c} and \textit{p} represent the convolution and max-pooling, respectively. The task-specific network with structures of \textit{c-f}, where $f$ represent fully connected layer with $50$ hidden units. Convolution kernel is with size of ${3 \times 3}$ and the max-pooling layer is with size of ${2 \times 2}$ and stride $2$. Both CNN1 and CNN2 are trained with batch normalization.
Finally, for two commonly used neural network architectures, we  utilized  the same setup as in \cite{yang2023learning} to obtain a modified LeNet5 and a modified VGG.

The initial values of the parameters in the neural network are sampled independently from a Gaussian distribution.
During the training process, the batch size of the training optimiser is set to $16$. Similar to the setting in the previous subsection, we report the final results as the average of $10$ training processes. We choose $S = 400$ for training tasks with $K = 200$ and $S = 500$ for tasks with $K \ge 500$. Following these settings, we can demonstrate the performance of our method and compare it to other methods on different training tasks.

In addition, we followed the steps outlined in \cite{ma2020efficient} and \cite{yang2023learning} to generate the MultiMNIST dataset.
We placed two ${28 \times 28}$ MNIST images in the upper left and lower right corners and introduced random offsets of up to 2 pixels in each direction to generate a synthetic image of size ${36 \times 36}$. Finally, we resized the synthetic image to ${28 \times 28}$ and normalized it with a mean of $0.1307$ and a standard deviation of $0.3081$. Each dataset consists of 60,000 training images and 10,000 test images, illustrating the MultiMNIST dataset.
Figure \ref{fig:multimnist} shows the synthetic MultiMNIST dataset, where each image has two labels. One label corresponds to the handwritten digit in the upper left corner, denoted as L, while the other represents the handwritten digit in the lower right corner, denoted as R.

\begin{figure}[ht]
\centering
\includegraphics[scale=0.45]{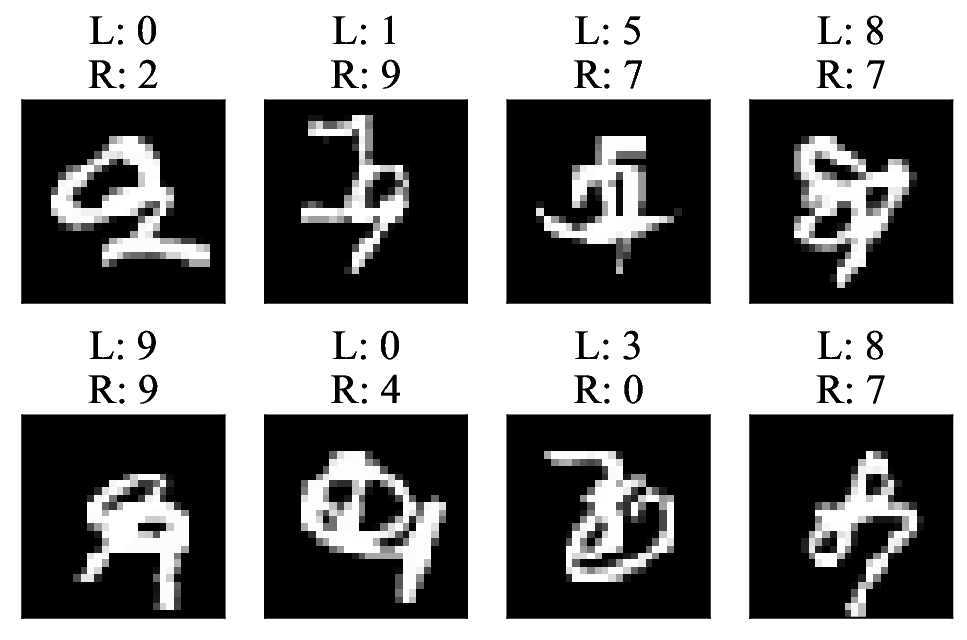}
\caption{MultiMNIST image samples.}
\label{fig:multimnist}
\end{figure}

In Table \ref{tab:cnn}, we provide the loss function, Top1 classification accuracy, and Top5 classification accuracy for each of the eight methods under different training tasks.  The metrics are separated by task1 and task2, where task1 refers to classifying the left image and task2 refers to classifying the right image.
Table \ref{tab:cnn} evaluates the performance of several SMOO methods, including MSMD, applied to CNN1 and CNN2 architectures on the Multi-MNIST dataset. CNN2 features a more complex structure compared to CNN1, with double the convolutional and pooling layers, which  allows for a deeper representation of the input data.
MSMD exhibits several notable strengths. First, it consistently achieves the lowest loss across different architectures and iteration steps. For instance, in CNN1 with K=200, MSMD reports a loss of 0.11 for Task1 and 0.22 for Task2, which outperforms other methods like MGDA and CR-MOGM. This low loss indicates faster convergence and greater efficiency in optimizing the model, an essential characteristic in complex MTL problems.

Additionally, MSMD performs exceptionally well in terms of accuracy, particularly in the Top5 accuracy. For CNN1 with K=200, MSMD achieves nearly perfect scores of 99.52\% for Task1 and 99.64\% for Task2, outperforming all other methods. This trend persists when the number of iterations is increased to K=1000, where MSMD still leads with 99.64\% accuracy for both tasks. Such robustness across different iterations and architectures indicates that MSMD is highly stable and effective, even with more complex tasks and network structures.

However, a slight decline in performance is observed when moving from CNN1 to the more complex CNN2 architecture. In CNN2 with K=200, the Top1 accuracy of MSMD decreases to 81.23\% for Task1 and 73.63\% for Task2, while in CNN1, it reached 89.05\% and 86.56\%, respectively. This drop could be attributed to the increased complexity of CNN2, where the additional layers might require further fine-tuning or a larger number of iterations to fully leverage the deeper network capabilities. Nonetheless, MSMD still outperforms other methods in this scenario, highlighting its adaptability to complex network structures.

Figure \ref{fig:cnn} illustrates the decreasing trend of the loss function for the SMOO optimizer during training, with each curve representing the average of 10 experiments. It is evident that the loss function for the gradient-based approach is significantly lower than that of the weighting method, demonstrating its effectiveness in neural network training. Although the proposed MSMD method is not as fast as several gradient-based methods (MGDA, PCGrad, CR-MOGM, SDMGrad) in CNN2, the decline in its loss function is substantial and closely parallels the top-performing methods.

In conclusion, MSMD demonstrates clear advantages, particularly in minimizing loss and maximizing accuracy across various tasks and architectures. Its performance in more complex network architectures like CNN2 suggests that further optimization, such as increasing the number of iterations or adjusting the hyperparameters, could enhance its effectiveness. Nevertheless, the strong overall performance of MSMD, especially in terms of accuracy, underscores its potential as a reliable method for SMOO in training simple neural network.

\begin{table}[htbp]
  \centering
  \caption{ Performance of different MOO optimizers of CNN on Multi-MNIST test dataset.}
    \begin{tabular}{c|c|c|cccccccc}
    \toprule
    \multicolumn{3}{c|}{Neural NetWork} & SGD   & ADAM  & RMSProp & MGDA  & PCGrad & CR-MOGM & SDMGrad & MSMD \\
    \midrule
          & \multirow{2}[2]{*}{Loss} & Task1 & 2.28  & 2.96  & 2.78  & 0.42  & 0.48  & 0.32  & 0.46  & \textbf{0.11} \\
          &       & Task2 & 2.47  & 2.49  & 2.67  & 0.04  & 0.41  & 0.49  & 0.41  & \textbf{0.22} \\
\cmidrule{2-11}    CNN1  & Top1  & Task1 & 13.05 & 9.63  & 13.33 & 79.52 & 79.00 & 80.97 & 81.23 & \textbf{89.05} \\
    K=200 & Acc(\%) & Task2 & 10.04 & 12.54 & 10.01 & 74.14 & 73.86 & 77.95 & 74.63 & \textbf{86.56} \\
\cmidrule{2-11}          & Top5  & Task1 & 51.99 & 52.91 & 52.29 & 98.62 & 98.53 & 98.78 & 98.84 & \textbf{99.52} \\
          & Acc(\%) & Task2 & 49.65 & 50.27 & 50.49 & 98.20 & 98.40 & 98.64 & 98.69 & \textbf{99.64} \\
    \midrule
          & \multirow{2}[2]{*}{Loss} & Task1 & 2.28  & 2.96  & 2.78  & 0.42  & 0.42  & 0.48  & 0.41  & \textbf{0.40} \\
          &       & Task2 & 2.47  & 2.49  & 2.67  & \textbf{0.40} & 0.41  & 0.41  & 0.66  & 0.41 \\
\cmidrule{2-11}    CNN2  & Top1  & Task1 & 9.63  & 9.63  & 12.33 & 79.52 & 79.00 & 79.00 & 80.97 & \textbf{81.23} \\
    K=200 & Acc(\%) & Task2 & 12.54 & 12.54 & 10.01 & \textbf{74.14} & 73.86 & 73.86 & 68.05 & 73.63 \\
\cmidrule{2-11}          & Top5  & Task1 & 52.19 & 52.91 & 52.29 & 98.62 & 98.53 & 98.53 & \textbf{98.78} & 98.24 \\
          & Acc(\%) & Task2 & 49.95 & 50.57 & 50.94 & 98.20 & \textbf{98.40} & \textbf{98.40} & 97.69 & 98.26 \\
    \midrule
          & \multirow{2}[2]{*}{Loss} & Task1 & 2.48  & 2.39  & 2.41  & 0.18  & 0.24  & 0.17  & \textbf{0.15} & \textbf{0.15} \\
          &       & Task2 & 2.45  & 2.94  & 2.37  & 0.16  & 0.34  & 0.39  & 0.41  & \textbf{0.33} \\
\cmidrule{2-11}    CNN1  & Top1  & Task1 & 11.90 & 11.87 & 10.03 & \textbf{89.80} & 89.42 & 88.70 & 89.35 & 89.63 \\
    K=1000 & Acc(\%) & Task2 & 9.47  & 11.99 & 11.36 & 86.42 & 85.53 & 86.53 & 86.05 & \textbf{86.72} \\
\cmidrule{2-11}          & Top5  & Task1 & 47.26 & 47.56 & 52.13 & 99.63 & \textbf{99.64} & 99.52 & 99.52 & \textbf{99.64} \\
          & Acc(\%) & Task2 & 48.98 & 50.65 & 20.77 & \textbf{99.51} & 99.35 & 99.42 & 99.42 & \textbf{99.51} \\
    \bottomrule
    \end{tabular}%
  \label{tab:cnn}%
\end{table}%

\begin{figure}[h]
\centering
\subfloat{\includegraphics[scale=0.4]{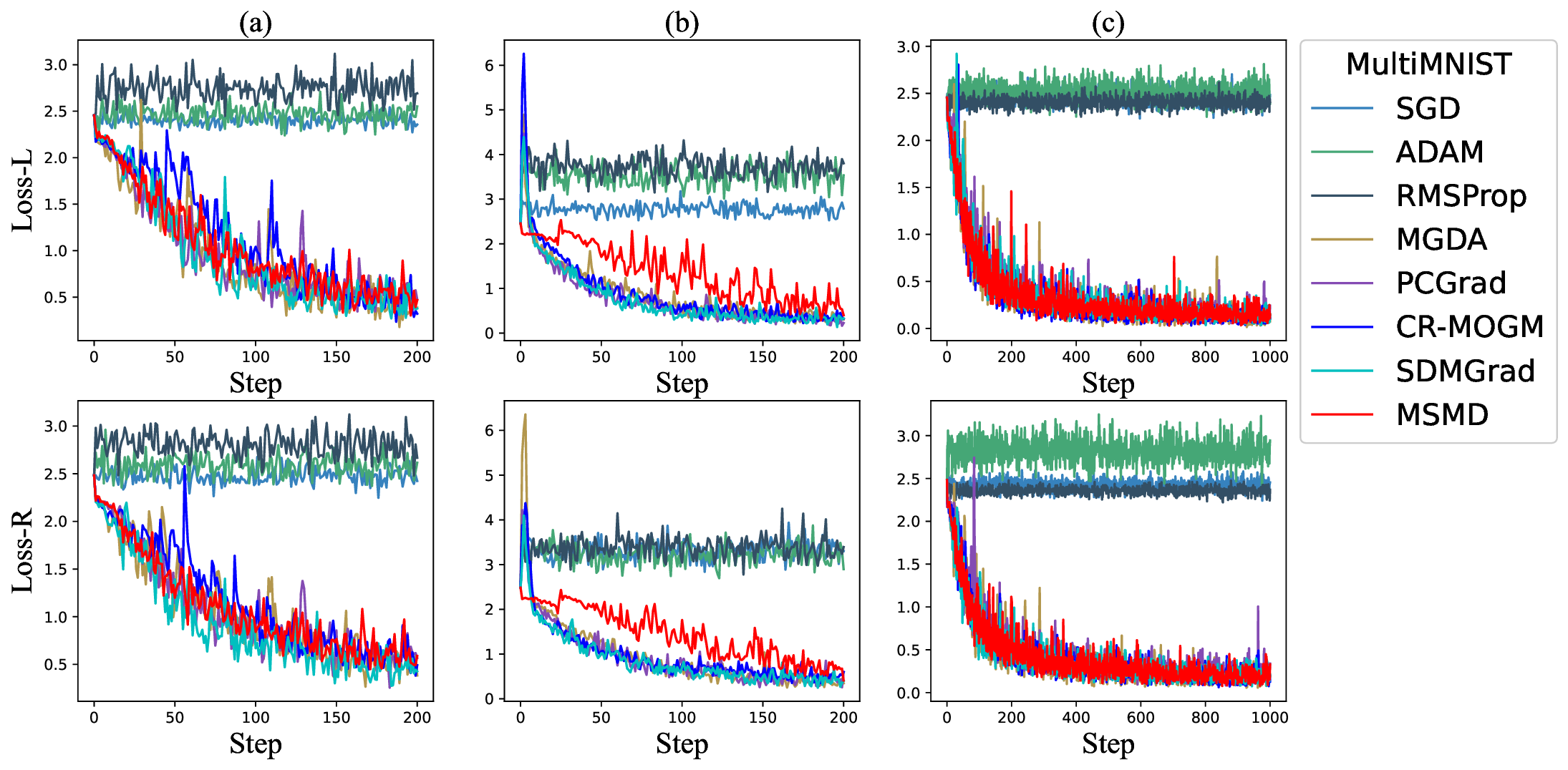}}\\

\caption{Comparison of loss function curves for training CNN. (a) CNN1 with step size $K = 200$ and $S = 400$; (b) CNN2 with step size $K = 200$ and $S = 400$; (c) CNN1 with step size $K = 1000$ and $S = 500$.}
\label{fig:cnn}
\end{figure}


In Table \ref{tab:vgg} and Figure \ref{fig:vgg}, the modified LeNet5 and modified VGG networks, MSMD shows competitive performance. For Modified LeNet5, MSMD achieves a loss of 0.22 for Task1 and 0.24 for Task2, slightly higher than MGDA (0.18, 0.30) but still lower than most of the other optimizers. When it comes to Top1 accuracy, MSMD achieves 85.49\% and 81.20\% for Task1 and Task2, respectively. While MGDA and PCGrad slightly outperform MSMD in Top1 accuracy, with MGDA reaching 85.17\% and 80.28\%, MSMD remains competitive and delivers higher Top5 accuracy. In Top5 accuracy, MSMD achieves 99.16\% for Task1 and 99.00\% for Task2, the highest values across all optimizers, reflecting its strength in capturing more correct predictions within the top predictions.

For the modified VGG, MSMD again demonstrates robust performance, particularly in the loss and Top5 accuracy. It records the lowest loss among all methods with values of 0.26 for Task1 and 0.21 for Task2, indicating effective convergence. While in Top1 accuracy, MSMD attains 86.18\% for Task1 and 82.69\% for Task2, which is competitive but slightly lower than 86.94\% and 84.24\% of SDMGrad, MSMD shines in Top5 accuracy. It achieves 99.22\% and 99.31\% for Task1 and Task2, respectively, the highest in the table, demonstrating superior capability in reducing misclassifications when considering the top five predictions.
Thus, MSMD consistently performs well across multiple metrics, with particular strengths in loss minimization and Top5 accuracy. Its versatility across different architectures, highlights its adaptability and robustness in MTL environments.
\begin{table}[htbp]
  \centering
  \caption{Performance of different MOO optimizers of modified LeNet5 and modified VGG on Multi-MNIST test dataset.}
    \begin{tabular}{c|c|c|cccccccc}
    \toprule
    \multicolumn{3}{c|}{Neural NetWork} & SGD   & ADAM  & RMSProp & MGDA  & PCGrad & CR-MOGM & SDMGrad & MSMD \\
    \midrule
          & \multirow{2}[2]{*}{Loss} & Task1 & 2.30  & 0.25  & 2.30  & \textbf{0.18} & 0.24  & 0.34  & 0.20  & 0.22 \\
          &       & Task2 & 2.30  & 2.26  & 2.30  & 0.30  & 0.27  & 0.62  & 0.28  & \textbf{0.24} \\
\cmidrule{2-11}    Modified & Top1  & Task1 & 14.06 & 10.27 & 13.29 & 85.17 & \textbf{86.00} & 82.19 & 84.53 & 85.49 \\
    LeNet5 & Acc(\%) & Task2 & 9.84  & 11.26 & 10.05 & 80.28 & 79.69 & 75.95 & 80.28 & \textbf{81.20} \\
\cmidrule{2-11}          & Top5  & Task1 & 49.98 & 51.66 & 52.64 & 99.10 & 99.11 & 99.05 & 99.12 & \textbf{99.16} \\
          & Acc(\%) & Task2 & 51.28 & 48.85 & 50.99 & 98.75 & 98.67 & 98.48 & 98.94 & \textbf{99.00} \\
    \midrule
          & \multirow{2}[2]{*}{Loss} & Task1 & 2.32  & 2.30  & 2.34  & 0.40  & 0.35  & 0.25  & \textbf{0.21} & 0.26 \\
          &       & Task2 & 2.28  & 2.34  & 2.30  & 0.54  & 0.50  & 0.46  & 0.28  & \textbf{0.21} \\
\cmidrule{2-11}    Modified & Top1  & Task1 & 10.43 & 11.75 & 10.78 & 82.89 & 85.38 & 84.62 & \textbf{86.94} & 86.18 \\
    Vgg   & Acc(\%) & Task2 & 10.69 & 10.38 & 10.21 & 78.08 & 79.88 & 78.78 & \textbf{84.24} & 82.69 \\
\cmidrule{2-11}          & Top5  & Task1 & 50.08 & 50.13 & 50.20 & 99.19 & 99.34 & 99.22 & \textbf{99.26} & 99.22 \\
          & Acc(\%) & Task2 & 51.18 & 50.94 & 50.90 & 99.00 & 99.15 & 99.06 & 99.00 & \textbf{99.31} \\
    \bottomrule
    \end{tabular}%
  \label{tab:vgg}%
\end{table}%

\begin{figure}[h]
\centering
\subfloat{\includegraphics[scale=0.4]{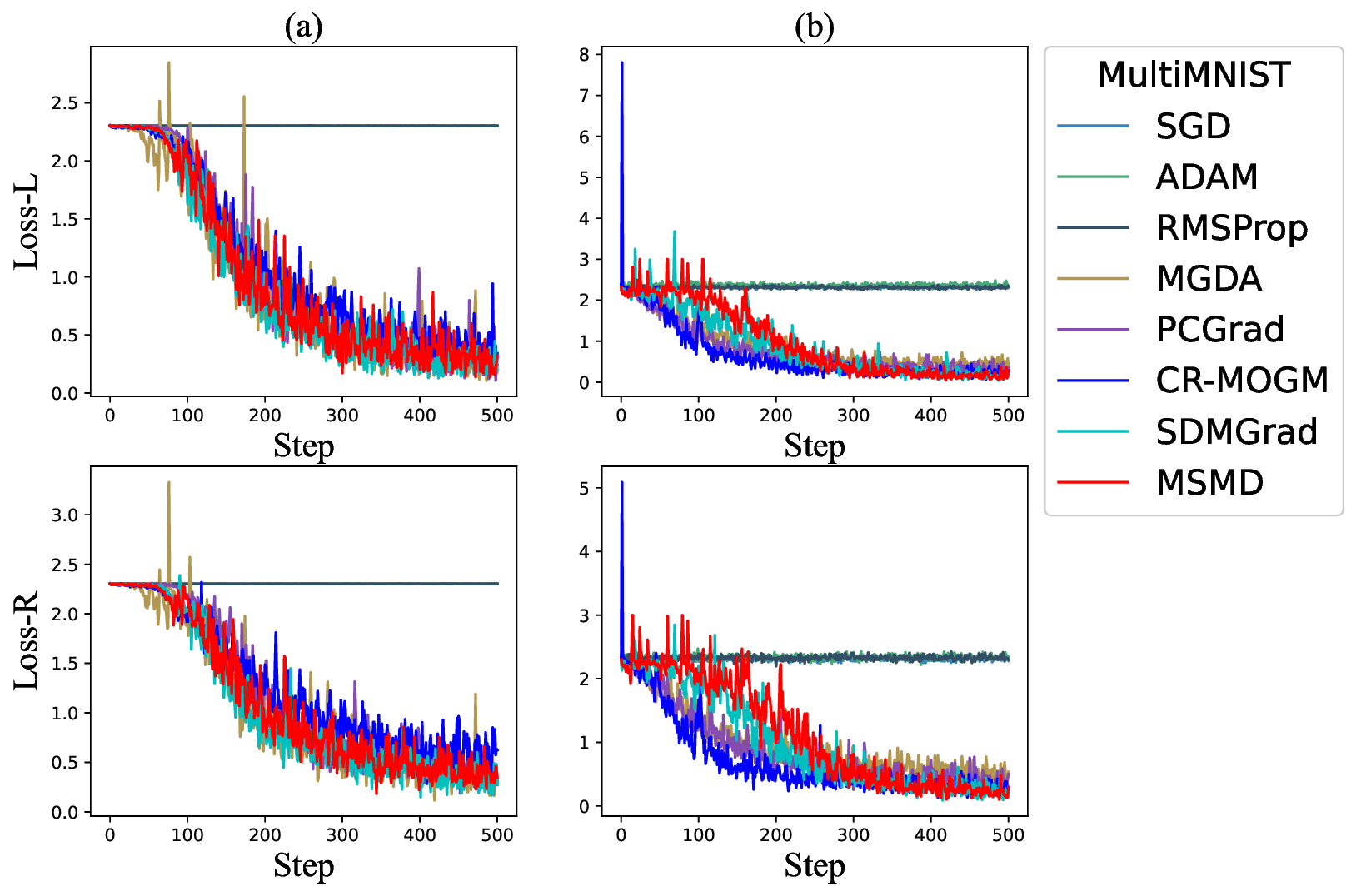}}\\

\caption{Comparison of loss function curves for training modified LeNet5 and modified VGG. (a) modified LeNet5 with step size $K = 500$ and $S = 500$; (b) modified VGG with step size $K = 500$ and $S = 500$.}
\label{fig:vgg}
\end{figure}

The previous experimental results indicate that SDMGrad performs comparably to our method on most of the training tasks. Specifically, SDMGrad outperforms our method on Modified VGG. However, it is worth noting that the iteration process of SDMGrad requires 3 samples, which is significantly more time-consuming compared to the single sample requirement of our method.
We present the time comparisons between SDMGrad and MSMD for different training tasks in Figure \ref{tab:time}. Each time is the average of the $S$ steps of the internal iterations to solve the subproblem, with the same value of $S$ chosen for both SDMGrad and MSMD. Finally, we calculate the average of these averaged times over the total number of external iterations $K$ to obtain the values reported in Table \ref{tab:time}.

For the CNN1 model with K=200, MSMD takes only 0.20 seconds compared to SDMGrad's 3.99 seconds. Similarly, for the CNN2 model with K=200, MSMD completes in 3.97 seconds, significantly faster than 11.35 seconds of SDMGrad. Even with a larger model, CNN1 K=1000, MSMD remains more efficient, taking 2.11 seconds compared to 6.62 seconds of SDMGrad. Furthermore, on the modified LeNet5 and VGG architectures, MSMD continues to show superior performance with 1.82 seconds and 13.66 seconds, respectively, while SDMGrad requires 7.33 seconds and 29.35 seconds.

Overall, MSMD achieves an average subproblem-solving time of 4.32 seconds, significantly lower than 11.73 seconds (SDMGrad), and almost $1/3$ of SDMGrad, which corresponds to our sample size of only $1/3$ of SDMGrad. This highlights the superior computational efficiency of MSMD, making it highly advantageous, especially for more complex models and tasks.

\begin{table}[htbp]
  \centering
  \caption{Comparison of average time for solving subproblems between SDMGrad and MSMD on different training tasks}
    \begin{tabular}{c|cccccccc|c}
    \toprule
    \multirow{2}[2]{*}{Method}   & CNN1  & CNN2  & CNN1  & Modified & Modified & \multirow{2}[2]{*}{Average} \\
           & K=200 & K=200 & K=1000 & LeNet5 & Vgg   &  \\
    \midrule
    SDMGrad  & 3.99s & 11.35s & 6.62s & 7.33s & 29.35s & 11.73s\\
    \midrule
    MSMD  & 0.20s & 3.97s & 2.11s & 1.82s & 13.66s & \textbf{4.32s} \\
    \bottomrule
    \end{tabular}%
  \label{tab:time}%
\end{table}%

\section{Conclusion}

In this paper, we propose the MSMD method, a novel SMOO method with convergence guaranteed. This method effectively addresses the challenge of analyzing SMOO convergence in the presence of stochastic noise. Our key strategy is to leverage the SMD method for solving the SMOO subproblem and obtaining an update direction with reduced bias. We demonstrate the sublinear convergence rate of MSMD under four different step size setups. Furthermore, we present a variant of MSMD tailored for SMOO with preferences and briefly outline its convergence. Finally, our numerical experiments on benchmark test functions and neural network training validate that our method excels in generating Pareto fronts and achieving high classification accuracy.

In our future work, we aim to further apply advanced optimization methods for the stochastic min-max problem in the design of the SMOO method. Apart from the SMD method utilized in this work, there exist several other techniques for addressing the stochastic min-max problem, such as the stochastic gradient descent ascen method without relying on noise strength assumption \cite{loizou2021stochastic}, the variance decay method \cite{palaniappan2016stochastic} and the coordinate randomization method \cite{sadiev2021zeroth}. These methods are based on different assumptions and have been analyzed under various scenarios to enhance upon classical SMD method \cite{nemirovski2009robust,dem1972numerical}. Therefore, it is valuable to analyze the convergence of SMOO based on our proposed novel idea and explore the performance associated with different stochastic methods when solving the SMOO problem.
\subsubsection*{Acknowledgments}
This work was funded by the Major Program of the National Natural Science Foundation of China (Grant Nos. 11991020, 11991024); the National Natural Science Foundation of China (Grant Nos. 12431010, 12171060); NSFC-RGC (Hong Kong) Joint Research Program  (Grant No. 12261160365); the Team Project of Innovation Leading Talent in Chongqing (Grant No. CQYC20210309536); the Natural Science Foundation of Chongqing of China (Grant No. ncamc2022-msxm01), the Major Project of Science and Technology Research Rrogram of Chongqing Education Commission of China (Grant No. KJZD-M202300504) and the Foundation of Chongqing Normal University (Grant Nos. 22XLB005, 22XLB006).

\end{document}